\documentclass[11pt]{article}
\usepackage{pstricks}
\usepackage[applemac]{inputenc}

\usepackage{amsmath,amsthm,amssymb}
\usepackage{mathrsfs}
\usepackage{graphicx,psfrag,epsfig}
\RequirePackage{pst-all}

\setlength{\textwidth}{15cm}
\setlength{\headheight}{0cm}
\setlength{\topmargin}{-.5cm}
\setlength{\headsep}{0cm}
\setlength{\textheight}{22cm}
\setlength{\unitlength}{1cm}
\setlength{\parindent}{1.5em}
\setlength{\oddsidemargin}{1.3cm}
\setlength{\evensidemargin}{1.3cm}


\def\al{\alpha}
\def\be{\beta}
\def\ga{\gamma}
\def\Ga{{\Gamma}}
\def\de{\delta}

\def\De{\Delta}
\def\eps{{\varepsilon}}
\def\ka{\kappa}
\def\la{\lambda}

\def\om{\omega}
\def\Om{\Omega}

\def\sig{{\sigma}}
\def\Sig{{\Sigma}}

\def\th{{\theta}}
\def\Th{\Theta}

\def\ph{\varphi}
\def\ze{{\zeta}}


\newcommand{\un}[1]{{\underline{#1}}}
\def\ov{\overline}

\def\Id{\mathop{\hbox{{\rm Id}}}\nolimits}

\def\Im{\mathop{\Im m}\nolimits}
\def\Re{\mathop{\Re e}\nolimits}

\def\ov{\overline}
\def\ha{\widehat}


\def\pg{\leaders\hbox to 5mm{\hfil.\hfil}\hfill}


\def\text#1{\,\hbox{#1}\;}

\def\tst{\textstyle}

\def\Bbb{\mathbb}
\def\A{\mathbb A}
\def\R{\mathbb R}

\def\Z{\mathbb Z}

\def\T{\mathbb T}
\def\N{{\mathbb N}}


\def\abs#1{\left\vert#1\right\vert}
\def\norm#1{\Vert#1\Vert}
\def\Max{{\rm Max\,}}
\def\norm#1{\left\Vert#1\right\Vert}

\def\abs#1{\left\vert#1\right\vert}

\def\Im{{\rm Im\,}}
\def\Re{{\rm Re\,}}

\def\Sup{\mathop{\rm Sup\,}\limits}
\def\sup{{\rm Sup\,}}
\def\Inf{\mathop{\rm Inf\,}\limits}
\def\Max{\mathop{\rm Max\,}\limits}
\def\Min{\mathop{\rm Min\,}\limits}
\def\Log{{\rm Log\,}}

\def\sh{{\rm sh\,}}

\def\norm#1{\left\Vert#1\right\Vert}

\def\abs#1{\left\vert#1\right\vert}

\def\Dron#1#2{\Frac{\partial#1}{\partial#2}}

\def\un{\underline}

\def\til{\widetilde}
\def\ha{\widehat}
\def\setm{\setminus}
\def\d{\partial}
\def\ov{\overline}

\def\inv{^{-1}}


\newtheorem{thm}{Theorem}

\newtheorem{lemma}{Lemma}[section]

\newtheorem{prop}{Proposition}[section]

\newtheorem{cor}{Corollary}[section]

\newtheorem{Def}{Definition}[section]




\reversemarginpar


\newcounter{parag}[subsection]
\renewcommand{\theparag}{{\bf \thesubsection.\arabic{parag}}}
\newcommand{\parag}{\medskip \addtocounter{parag}{1} \noindent{\theparag\ } }

\newcounter{paraga}

\newcounter{pparag}

\def\Bbibitem#1#2{\bibitem[#1]{#2}}

\def\Frac#1#2{{{\displaystyle\strut#1}\over{\displaystyle\strut#2}}}
\def\frac#1#2{{\textstyle {#1\over#2}}}

\def\pdemi{{\tst{1\over2}}}

\def\be{\beta}

\def\rk{{\rm rank\,}}

\def\F{{\mathscr F}}
\def\F{{\mathscr F}}
\def\B{{\mathscr B}}
\def\P{{\mathscr P}}
\def\M{{\mathscr M}}

\def\J{{\mathscr J}}
\def\K{{\mathscr K}}
\def\I{{\mathscr I}}
\def\P{{\mathscr P}}

\def\jC{{\mathscr C}}
\def\jA{{\mathscr A}}
\def\jN{{\mathscr N}}

\def\jD{{\mathscr D}}
\def\jL{{\mathscr L}}

\def\jK{{\mathscr K}}
\def\O{{\mathscr O}}
\def\P{{\mathscr P}}
\def\jR{{\mathscr R}}
\def\U{{\mathscr U}}
\def\S{{\mathscr S}}
\def\jV{{\mathscr V}}

\def\S{{\mathscr S}}

\def\A{{\Bbb A}}
\def\Z{{\Bbb Z}}
\def\N{{\Bbb N}}
\def\R{{\Bbb R}}
\def\T{{\Bbb T}}
\def\BB{{\Bbb B}}

\def\bS{{\Bbb S}}

\def\htop{{\rm h_{top}}}

\def\htop{{\rm h_{\rm top}\,}}

\def\L{{\rm C\,}}
\def\LF{{\rm C^*\,}}
\def\LP{{\rm C^+}}

\def\un{\underline}

\def\diam{{\rm diam}}


\title{Dynamical complexity and symplectic integrability}

\author{Jean-Pierre Marco
\thanks{Universit\'e Paris 6.
Institut de Math\'ematiques de Jussieu, UMR 7586, {\em Analyse alg\'ebrique},
175 rue du Chevaleret, 75013 Paris.
E-mail: marco@math.jussieu.fr
}}
\date{}

\begin{document}

\maketitle

\begin{abstract}
We introduce two numerical conjugacy invariants for dynamical systems -- the complexity and weak complexity indices -- which are well-suited for the study of ``completely integrable'' Hamiltonian systems.  These invariants can be seen as ``slow entropies'', they describe the polynomial growth rate of the number of balls (for the usual ``dynamical'' distances) of  coverings of the ambient space.
We then define a new class of integrable systems, which we call decomposable systems, for which one can prove that the weak complexity index is smaller than the number of degrees of freedom. Hamiltonian systems integrable by means of non-degenerate integrals (in Eliasson-Williamson sense), subjected to natural additional assumptions, are the main examples of decomposable systems. We finally give explicit examples of  computation of the complexity index, for Morse Hamiltonian systems on surfaces and
for two-dimensional gradient systems.
\end{abstract}

\vskip3cm

\newpage

\tableofcontents

\newpage

\section{Introduction}
Symplectic geometry enables one to associate a Hamiltonian vector field with each regular function on a symplectic
manifold. It moreover provides a very efficient framework for proving the existence of first integrals for
such vector fields and perform specific reductions. In the so-called completely integrable cases this leads
to a geometric description of the orbits and their time parametrization in an open and dense subset of the phase space.

Such integrable systems seems to be dynamically simple, it actually turns out that for a large subclass of
integrable systems (but not all of them) the  classical  topological entropy vanishes. However, nobody doubts that the two fixed centers
problem has a more complex dynamical behaviour than the Kepler problem, or the geodesic flow on the triaxial ellipsoid is
more complex than the geodesic flow on the round sphere. A theory of {\em geometric} complexity has already been constructed
by Fomenko and his collaborators, our purpose here is to give a {\em dynamical} approach to integrable complexity and, to some extent,
analyze the relations between both approaches.

Our aim in the first part of this paper is first to introduce new tools which reveal additional structure for such systems.
While the topological entropy detects an exponential growth rate for the complexity of general dynamical systems,
for integrable systems we were led to analyze the growth rate at a {\em polynomial} scale. To this aim we introduce two
new distinct conjugacy invariants: the complexity index $\L$ and the weak complexity index $\L^*$. The values
of these invariants for the simplest possible Hamiltonian systems on the annulus $\A=\T\times\R$, endowed with the coordinates $(\th,r)$,
are depicted in the next figure (where $\ph$ is the time-one
map generated by the Hamiltonian flow).

\begin{figure}[h]
\begin{center}
\begin{pspicture}(4cm,5.5cm)
\psset{xunit=.5cm,yunit=.5cm}
\rput(-6,6){
\psaxes[linewidth=.02,Dx=16,Dy=16]{->}(-2,0)(-3,-4)(3,4)
\psline[linewidth=.02](2,-4)(2,4)
\psline[linewidth=.04](-2,-3)(2,-3)
\psline[linewidth=.04](-2,3)(2,3)
\multido{\i=-2+1}{5} {%
\psline[linewidth=.02](-2,\i)(2,\i)}
\multido{\i=-3+1}{7} {%
\psline[linewidth=.02]{->}(-.2,\i)(.2,\i)}
\rput(2.8,.5){$\theta$}
\rput(-2.3,3.9){$r$}
\rput(0,-5){$h(r)=\al\, r,\ \al>0$}
\rput(0,-6.2){$\L^*(\ph)=\L(\ph)=0$}
}
\rput(4,6){
\psaxes[linewidth=.02,Dx=16,Dy=16]{->}(-2,0)(-3,-4)(3,4)
\psline[linewidth=.02](2,-4)(2,4)
\psline[linewidth=.04](-2,-3)(2,-3)
\psline[linewidth=.04](-2,3)(2,3)
\psline[linewidth=.04](-2,0)(2,0)
\multido{\i=-3+1}{7} {%
\psline[linewidth=.02](-2,\i)(2,\i)}
\multido{\i=-3+1}{3} {%
\psline[linewidth=.02]{<-}(-.2,\i)(.2,\i)}
\multido{\i=1+1}{3} {%
\psline[linewidth=.02]{->}(-.2,\i)(.2,\i)}
\rput(2.8,.5){$\theta$}
\rput(-2.3,3.9){$r$}
\rput(0,-5){$h(r)=r^2$}
\rput(0,-6.2){$\L^*(\ph)=\L(\ph)=1$}
}
\rput(14,6){
\psaxes[linewidth=.02,Dx=16,Dy=16]{->}(-2,0)(-3,-4)(3,4)
\psline[linewidth=.02](2,-4)(2,4)
\parametricplot[linewidth=.04,plotpoints=200]{-180}{180}{t  90 div   t cos 10 add sqrt}
\parametricplot[linewidth=.04,plotpoints=200]{-180}{180}{t  90 div   t cos 10 add sqrt -1 mul}
\parametricplot[linewidth=.01,plotpoints=200]{-180}{180}{t  90 div   t cos 8 add sqrt}
\parametricplot[linewidth=.01,plotpoints=200]{-180}{180}{t  90 div   t cos 8 add sqrt -1 mul}
\parametricplot[linewidth=.01,plotpoints=200]{-180}{180}{t  90 div   t cos 5 add sqrt}
\parametricplot[linewidth=.01,plotpoints=200]{-180}{180}{t  90 div   t cos 5 add sqrt -1 mul}
\parametricplot[linewidth=.01,plotpoints=200]{-180}{180}{t  90 div   t cos 2 add sqrt}
\parametricplot[linewidth=.01,plotpoints=200]{-180}{180}{t  90 div   t cos 2 add sqrt -1 mul}
\parametricplot[linewidth=.04,plotpoints=200]{-180}{180}{t  90 div   t cos 1 add sqrt}
\parametricplot[linewidth=.04,plotpoints=200]{-180}{180}{t  90 div   t cos 1 add sqrt -1 mul}
\parametricplot[linewidth=.01,plotpoints=200]{-120}{120}{t  90 div   t cos .5 add sqrt}
\parametricplot[linewidth=.01,plotpoints=200]{-120}{120}{t  90 div   t cos .5 add sqrt -1 mul}
\parametricplot[linewidth=.01,plotpoints=200]{-60}{60}{t  90 div   t cos -.5 add sqrt}
\parametricplot[linewidth=.01,plotpoints=200]{-60}{60}{t  90 div   t cos -.5 add sqrt  -1 mul}
\pscircle[fillstyle=solid,fillcolor=black](0,0){.07}
\pscircle[fillstyle=solid,fillcolor=black](-2,0){.07}
\pscircle[fillstyle=solid,fillcolor=black](2,0){.07}
\rput(2.8,.5){$\theta$}
\rput(-2.3,3.9){$r$}
\rput(0,-5){$P(\th,r)=\pdemi r^2+\cos(2\pi\th)$}
\rput(0,-6.2){$\L^*(\ph)=1,\ \L(\ph)=2$}
}
\end{pspicture}
\end{center}
\end{figure}

The two invariants are very similar in nature, but their definitions slightly differ and as a rule they take very different values
in general, even for very simple systems. However, they always satisfy the inequality $\L^*\leq \L$.  The invariant $\L^*$ enjoys more structure properties
than $\L$ which make it much easier to determine, while $\L$ is finer than $\L^*$ and discriminates between integrable behaviours: it actually
takes different values for the three examples above.

The second and biggest part of the paper is devoted to the relationship between our indices and the integrability properties of Hamiltonian systems. As both indices are infinite when the topological entropy is positive, our first
task is to give precise constraints under which the entropy of integrable systems vanishes: this yields the notion of strong
integrability (which first appeared informally in \cite{P}).

We then define a natural subclass of (strongly) integrable systems, which we call decomposable, for which the $\L^*$ index admits
{\em a priori} upper bounds: $\L^*$ is less than the number of degrees of freedom of a decomposable system. Most of known
examples of integrable Hamiltonian systems fall into this class (obviously, it is in particular the case for the three examples above),
so one can see our definition  as a natural
one for practical use. As a consequence, computing the weak complexity index may give rise to an obstruction theory
for practical integrability.

The second invariant $\L$ enables one to construct a first complexity scale for decomposable systems. As a preliminary study
we analyze its behaviour on a very simple class of systems, generated by Morse Hamiltonian functions on compact symplectic
surfaces with boundary. It turns out that for such systems the index $\L$ can take only three integer values: $0$ if
the system is conjugated to our first example, $1$ if it is conjugated to our second one, and $2$ if the Hamiltonian function admits
singular points of Morse index $1$.

The behaviour of $\L$ is drastically different from that of the topological entropy, and in particular do not only depend on the
restriction of the system on the non-wandering domain. To emphasize this new aspect of complexity, we also introduce a class
of two-dimensional systems with gradientlike behaviour, and prove that their complexity index takes the same values as in
the Hamiltonian case.

To keep this paper a reasonable length and get rid of many technical details, we limit ourselves here to these particular examples.
Two subsequent papers will be devoted to the extensive study of the complexity indices of higher dimensional non-degenerate integrable systems
and gradient systems.
In the rest of this introduction, we review the various necessary notions before stating our main results more precisely.

\vskip2mm

{\bf 1. Symplectic notions.}  Let $(M,\Om)$ be a symplectic manifold of dimension $2\ell$. Given a Hamiltonian function
$H\in C^\infty(M,\R)$,
the Hamiltonian vector field $X_H$ is the symplectic gradient of $H$, usually defined by the equality
$
i_{X_H}\Om=-dH.
$
The Poisson bracket associated with the symplectic form $\Om$ is defined by
$
\{f,g\}=\Om(X_f,X_g)
$
for any pair of $C^\infty$ functions $(f,g)$ on $M$.

We will denote by
$\A^\ell=\T^*\T^\ell=\T^\ell\times\R^\ell$ the standard annulus, equipped with the {\em angle-action} coordinates
$\th\in\T^\ell$ and $r\in\R^\ell$, and the  symplectic form $\Om_0=\sum_{k=1}^\ell dr_k\wedge d\th_k$.
In the following
we will frequently deal with Hamiltonian functions on (subsets of) $\A^\ell$ which depend only on the action variable
$r$, such systems are said to be in {\em action-angle form}. If $h:O\subset\R^n\to\R$ is in action-angle form, for $t\in \R$
the time--$t$ diffeomorphism generated by its
Hamiltonian flow is well-defined on $\T^n\times O$ and reads
\begin{equation}
(\th,r)\longmapsto\Big(\th+t\om(r)\  [{\rm mod}\,\ \Z^\ell], r\Big)
\end{equation}
with $t\in\R$ and $\om(r)=\d_rh(r)\in\R^\ell$.

We say that a smooth map $F=(f_i)_{1\leq i\leq \ell}:M\to \R^\ell$ is an {\em integral map}
when its components are in involution, that is
$
\{f_i,f_j\}=0
$
for $1\leq i,j\leq\ell$. Given a smooth Hamiltonian function $H:M\to\R$, we say that $F:M\to\R^\ell$ is an {\em integral
for $H$} when it is an integral map whose components are in involution with $H$.

If $F$ is an integral for $H$, then the classical Liouville-Mineur-Arnold theorem (or action-angle theorem for short) shows that for each compact
connected component $T$ of a regular level set of $F$, there exists a neighborhood $N$ and a symplectic diffeomorphism
$\Psi$ from a neighborhood of $\T^\ell\times\{0\}$ in $\A^\ell$ to $N$ such that $H\circ \Psi$ is in action-angle form.
Up to diffeomorphism, the Hamiltonian flow on $N$ is therefore immediately integrated and exhibits a very simple
dynamical behaviour.

\vskip2mm

{\bf 2. Topological entropy.}  Let us now recall the definition and some basic facts on the topological entropy.
Let $(X,d)$ be a compact metric space and let $\ph:X\to X$ be a continuous map.
For each integer $n\geq 1$ one defines the dynamical metric
\begin{equation}\label{Eq:metricph}
d_n^\ph(x,y)=\Max_{0\leq k\leq n-1} d\big(\ph^k(x),\ph^k(y)\big).
\end{equation}
It is easy to see that all the metrics $d_n^\ph$ define the same topology on $X$. In particular $(X,d_n^\ph)$
is compact and therefore, for any $\eps>0$, $X$ can be covered by a finite number of balls $B_n(x,\eps)$ of
radius $\eps$ for $d_n^\ph$. Let $G_n(\eps)$ be the minimal number of balls in such a covering.
Then  the topological entropy of $\ph$  is
\begin{equation}
\htop(\ph)
=\Sup_{\eps >0}\limsup_{n\to\infty} \Frac{1}{n}\Log G_n(\eps)
=\lim_{\eps\to 0}\limsup_{n\to\infty} \Frac{1}{n}\Log G_n(\eps).
\end{equation}
The topological entropy therefore detects the exponential growth rate of the minimal number of initial conditions
which are necessary to follow the $n$ first iterates of any point of the space within a precision of $\eps$ (more exactly
it is the limit of this growth rate when $\eps\to 0$). One can also define in the same way the topological entropy $\htop(\ph,Y)$ of $\ph$
on any (not necessarily invariant) subset $Y$ of $X$. See \cite{HK} and \cite{Pe} for more details.
\vskip1mm

The topological entropy enjoys several naturality properties  which we briefly recall here to allow us to compare with the properties of the complexity indices.

\vskip1mm\noindent
-- {\em Invariance.} $\htop$ is a $C^0$ conjugacy invariant and does not depend on the choice of topologically equivalent metrics on $X$.

\vskip1mm\noindent
-- {\em Factors.} $\htop(\ph,X)\geq \htop(\ph',X')$ when $(X',\ph')$ is a factor of $(X,\ph)$.

\vskip1mm\noindent
-- {\em Restriction.} When $Y$ is invariant under $\ph$, $\htop(\ph,Y)=\htop(\ph_{\vert Y})$.

\vskip1mm\noindent
-- {\em Monotonicity.} $\htop(\ph,Y)\leq \htop(\ph,Y')$ when $Y\subset Y'$.

\vskip1mm\noindent
-- {\em Transport.} $\htop(\ph,Y)=\htop(\ph,\ph(Y))$ when $\ph$ is a homeomorphism.
\vskip1mm

It moreover satisfies additional properties which will be crucial for our purposes.

\vskip1mm

The first one is the {\em $\sig$--union property}, which states
that the topological entropy of a continuous map on a countable union of invariant subsets is the supremum of the topological entropies
of the map on the subsets.

\vskip1mm

The second one is the so-called {\em variational principle}, which states that the topological entropy of a homeomorphism
on a compact space is the supremum of the metric entropies relative to ergodic invariant measures.

\vskip1mm

The third one is the {\em Bowen formula} (\cite{B}): if $\ph: X\to X$ and $\ph':X'\to X'$ are continuous and
if there exists a continuous surjective map $\pi:X\to X'$ which semi-conjugates $\ph$ and $\ph'$ (that is $\ph'$ is a factor of $\ph$),
then
$$
\htop(\ph)\leq\Sup_{x'\in X'} \htop(\ph, \pi\inv\{x'\}).
$$

It is not difficult to see that the topological entropy of a Hamiltonian system in action-angle form on a compact
subset of $\A^\ell$ is zero. One way to prove this is to remark that such a system admits an invariant foliation by Lagrangian tori on which the restriction is an isometry (and therefore has zero entropy) and use Bowen's formula, or the variational principle.

 Using countable covering arguments together with the last remark, the $\sig$--union property and the action-angle theorem prove that the topological entropy
of a completely integrable system on the complement of the singular set of its integral (that is the inverse image of the set of critical values) vanishes. So the topological entropy of such
systems is localized on the singular set of their integral maps.
Still,  it is possible to exhibit examples of smooth {\em geodesic} systems, or more precisely duals of such systems,
on the cotangent bundle of Riemannian manifolds,
which  possess smooth integrals which are regular on open and dense subsets, and whose flow nevertheless has {\em positive} topological entropy  (even when  restricted to the unit cotangent bundle),
see \cite{BT1,BT2}.

\vskip2mm

{\bf 3. Organization and main results of the paper.} The vanishing of the topological entropy of action-angle systems clearly proves that
it sees nothing of the transverse structure of a Lagrangian foliation. Our first goal is to construct finer invariants for which this structure
becomes apparent.
 It turns out that the {\em polynomial} growth rate of the quantity
$G_n(\eps)$ defined above is well-defined for these systems, and enjoys very interesting properties.
To be more precise, with the same notation as above, we define the {\em complexity index} of $\ph$ as the
quantity
\begin{equation}\label{Eq:freedin}
\L(\ph)=\Sup_{\eps >0}\Inf\Big\{\sig\geq0\mid \lim_{n\to\infty}\Frac{1}{n^\sig}\,  G_n(\eps)=0\Big\}.
\end{equation}
We will prove that for systems in action-angle form $h:O\subset\R^\ell\to\R$ the value of the complexity index
is exactly equal to the maximal rank of the Hessian of $h$ on $O$. For these systems, the index $\L$ therefore detects the
``effective'' number of degrees of freedom.

Section 2 is devoted to the extensive study of the properties of the complexity index $\L$, and to the introduction of another closely
related one, the weak complexity index $\L^*$. We closely follow a general approach developed by Pesin in \cite{Pe} for
lower and upper dimension capacities.

Both indices satisfy the naturality properties quoted above for the topological entropy. Moreover $\L^*\leq \L$, but they generally take different values, even for very simple systems (for instance, a gradient system on a segment, see proposition \ref{prop:indseg}).
However, a striking fact is that they {\em coincide} for action-angle Hamiltonian systems.

As a consequence, both indices do not satisfy any kind of variational principle, nor any analog of Bowen's formula (otherwise they would
vanish for action-angle systems). The main question is therefore to know whether they enjoy a $\sig$--union property. It turns out that
 only the weak index $\L^*$ admits such a $\sig$-union principle (which is in strong contrast with analogous
constructions for exponential growth rates, see \cite{Pe}). We therefore take advantage of this major difference between $\L$ and $\L^*$ to
obtain two different approaches of the notion of complexity of integrable systems.

The examples by Bolsinov and Ta\"\i manov prove that it is necessary to introduce some additional constraints to be able to control the global topological entropy of integrable systems in the $C^\infty$--class (and even in the Gevrey class).
There are several possible ones, some of them being of local nature (the non-degeneracy conditions of Ito and Eliasson, see Section 3),
other ones being semi-global.
The mildest of these global conditions was introduced by G. Paternain in  \cite{P},  where it underscores the whole approach
without deserving any particular terminology. In some respects it may be compared with Ta\"\i manov's notion of
``tame integrability'' for geodesic flows \cite{T},  even if it largely differs from this latter one.
In Section 3 of this paper we will give a formal definition for this condition, which we call {\em strong integrability}.
Paternain proved in \cite{P} that if a smooth Hamiltonian is strongly integrable,
then the topological entropy of its flow vanishes. The proof makes a crucial use of the variational principle, a slightly different version
will be given in Section 3.
Is is easy to see that an integral which satisfies the nondegeneracy conditions of Ito
or Eliasson also satisfies the previous strong integrability condition. Therefore  a great amount of examples of Hamiltonian systems
with zero topological entropy is at our disposal, which legitimates our attempt to say more about their dynamical complexity.

Still in Section 3, we introduce a refinement of the notion of strong integrability, which we call {\em decomposability}, and
prove that the weak complexity $\L^*$ of decomposable systems is upper bounded by their number of degrees of freedom (theorem \ref{th:decomp})
This way, the computation of the numerical invariant provides us with a new tool for proving obstructions to ``integrability''.
We then give sufficient conditions for a system with a non-degenerate integral to be decomposable. Again, many classical
examples prove to be decomposable (a general study of decomposability of classical systems will be the subject of a subsequent paper).

The lack of $\sig$--union property for the index $\L$ makes it much more difficult to determine (and so probably much richer) than $\L^*$. In particular, surprisigly enough, in the Hamiltonian case its value is not completely encoded by the infinity jet of the system at the singular set, but instead by its germ. In Section 4 and Section 5, we therefore limit
ourselves here to the easiest  we examine the behaviour of the complexity index $\L$ of simple systems on surfaces: Morse non-degenerate
Hamiltonian systems on symplectic surfaces in Section 4 (theorem \ref{thm:indham}), and particular gradient systems in the plane in Section 5 (theorem \ref{thm:indgrad}). This can be seen as a reasonably non technical introduction to more elaborated further work.

To conclude this introduction, let us mention the various interesting relations between the complexity indices and other complexity
measurements, such as for instance (a weak version of) Lyapounov exponents or the asymptotic behaviour of the number of orbits
connecting two points in Rienannian geometry (and notably the integrable cases of the multidimensional ellipsoids). Also many cases
of geodesic flows with zero topological entropy are known (the geodesic flows on rationally elliptic manifolds for instance), which
give rise to new problems at the polyomial level. Again we refer  to further work for these questions.

\vskip.5cm

\noindent{\bf Acknowledgements.} I wish to thank Laurent Lazzarini for numerous helpful conversations, notably on the determination
of the complexity indices on surfaces, and Eva Miranda for many stimulating discussions about non-degenerate singularities of integral maps.
I also thank Clémence Labrousse for a careful reading of the first draft.

The preparation of this paper was motivated and made possible by the
rich interaction initiated by the ANR {\em Int\'egrabilit\'e
r\'eelle et complexe en M\'ecanique Hamiltonienne} (JC05\_41465). I
wish to thank Alexei Tsigvintsev for the organization.


\section{The complexity indices}
In this section we introduce the two complexity indices $\LF$ and $\L$. Our approach is based on \cite{Pe}. We
state and prove their main properties and analyze their behaviour for two ``test'' systems:  gradient flows on the
segment and action-angle Hamiltonian systems.

We denote by $\N$ the set of non-negative integers and by $\N^*$ the set of positive ones.
Given a compact metric space $(X,d)$ together with a continuous map $\ph:X\to X$,   for $n\in\N^*$, we denote by $d_n^\ph$ the
 dynamical distances  associated with $\ph$, defined in (\ref{Eq:metricph}).  Note that $d_1^\ph=d$.

When there is no risk of confusion, the ball centered at $x\in X$ and of radius $\eps$ for
$d_n^\ph$ is denoted by $B_n(x,\eps)$. For each $\eps>0$, we consider the set
$$
\B_\eps=\{B_n(x,\eps)\mid (x,n)\in X\times\N\}
$$
of all open balls of radius $\eps$ for the distances $d_n^\ph$. In the following we also say that such a ball $B_n(x,\eps)$
is an $(n,\eps)$-ball.

One easily sees that the metric spaces $(X,d_n^\ph)$ are compact. For $Y\subset X$ we denote by $G_n(Y,\eps)<+\infty$ the
minimal number of $(n,\eps)$-balls
in a finite covering of $Y$ (note that the centers of the balls do not necessarily belong to $Y$, and that we do not require $Y$ to be invariant
under $\ph$).
We say that a (necessarily finite) subset $S$ of $Y$ is $(n,\eps)$--separated when for each pair $a,a'$ of elements of
$Y$ with $a\neq a'$, then $d_n^\ph(a,a')\geq\eps$. We denote by $S_n(Y,\eps)$ the maximal cardinality of an $(n,\eps)$--separated
subset of $Y$. Clearly
$$
G_n(Y,\eps/2)\geq S_n(Y,\eps)\geq G_n(Y,\eps).
$$
We abbreviate $G_n(X,\eps)$ and $S_n(X,\eps)$ in $G_n(\eps)$ and $S_n(\eps)$ respectively.


\subsection{The weak complexity index}
We consider a compact metric space $(X,\ph)$ together with a continuous map $\ph:X\to X$.

\parag
Given a subset $Y$ of $X$ (not necessariliy $\ph$-invariant), for $\eps>0$ we denote by $\jC(Y,\eps)$ the set of all coverings of $Y$ by balls of
$\B_\eps$,
so an element
of $\jC(Y,\eps)$ is a family $\big(B_{n_i}(x_i,\eps)\big)_{i\in I}$ of $(n_i,\eps)$-balls such that
$$
Y\subset \bigcup_{i\in I}B_{n_i}(x_i,\eps).
$$
 Again, we do not require that $x_i\in Y$.
Given $N\in \N^*$, we denote by $\jC_{\geq N}(Y,\eps)$ the subset of $\jC(Y,\eps)$ formed by the coverings
$\big(B_{n_i}(x_i,\eps)\big)_{i\in I}$ of $Y$ for which $n_i\geq N$.

\parag
Let the subset $Y\subset X$ be given and fix $\eps>0$.
Given an element $C=\big(B_{n_i}(x_i,\eps)\big)_{i\in I}$
in $\jC(Y,\eps)$ and a non-negative real parameter $s$, we set
$$
M(C,s)=\sum_{i\in I}\Big(\Frac{1}{n_i}\Big)^s\in[0,+\infty].
$$
Note that $M(C,s)$ depends on the family $C$ and not only on its image (the set of balls $B_{n_i}(x_i,\eps)$), actually, it is possible
that a same balls admits several representations of the form $B_{n_i}(x_i,\eps)$. This will cause no trouble in the following.
Let $N\in\N^*$. Since there exists finite coverings for $X$ by $(N,\eps)$--balls, there exists finite coverings $C\in\jC_{\geq N}(Y,\eps)$,
we set
$$
\de(Y,\eps,s,N)=\Inf\Big\{M(C,s)\mid C\in \jC_{\geq N}(Y,\eps)\Big\}\in[0,+\infty[.
$$
Obviously $\de(Y,\eps,s,N)\leq\de(Y,\eps,s,N')$ when $N\leq N'$, so one can define
$$
\De(Y,\eps,s)=\lim_{N\to+\infty} \de(Y,\eps,s,N)=\Sup_{N\in\N} \de(Y,\eps,s,N)\in[0,+\infty].
$$
The definition of the weak complexity index will be based on the following lemma.

\begin{lemma}\label{lem:critval}
There exists a unique critical value $s_c(Y,\eps)$ such that
\begin{equation}\label{eq:critval}
\De(Y,\eps,s)=0\ \text{if}\ s>s_c(Y,\eps)
\quad \text{and}\quad
\De(Y,\eps,s)=+\infty\ \text{if}\ s<s_c(Y,\eps).
\end{equation}
\end{lemma}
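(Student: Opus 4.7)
The plan is to establish the dichotomy from a single monotonicity estimate comparing the sums $M(C,s)$ at two different exponents on coverings whose balls are all ``long enough''.

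\textbf{Key estimate.} Fix $s < s'$ and $N \in \N^*$. For any $C = (B_{n_i}(x_i,\eps))_{i\in I} \in \jC_{\geq N}(Y,\eps)$, since $n_i \geq N$, I have $(1/n_i)^{s'} \leq (1/N)^{s'-s}(1/n_i)^s$. Summing over $i\in I$ yields
\begin{equation*}
M(C,s') \leq \Big(\frac{1}{N}\Big)^{s'-s} M(C,s),
\end{equation*}
and taking the infimum over $C \in \jC_{\geq N}(Y,\eps)$ gives
\begin{equation*}
\de(Y,\eps,s',N) \leq \Big(\frac{1}{N}\Big)^{s'-s} \de(Y,\eps,s,N).
\end{equation*}

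\textbf{Consequence.} If $\De(Y,\eps,s) < +\infty$, then since $\de(Y,\eps,s,N)$ is nondecreasing in $N$ with limit $\De(Y,\eps,s)$, it is bounded above by $\De(Y,\eps,s)$ for every $N$. Combined with the key estimate, this gives $\de(Y,\eps,s',N) \leq N^{-(s'-s)}\De(Y,\eps,s)$, which tends to $0$ as $N \to +\infty$. Hence $\De(Y,\eps,s') = 0$.

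\textbf{Defining $s_c$ and concluding.} Set
\begin{equation*}
s_c(Y,\eps) := \Inf\{s \geq 0 \mid \De(Y,\eps,s) = 0\} \in [0,+\infty],
\end{equation*}
with the convention $\Inf\emptyset = +\infty$. If $s > s_c$, by definition of the infimum there exists $s' \in [s_c,s)$ with $\De(Y,\eps,s') = 0 < +\infty$, so by the consequence above $\De(Y,\eps,s) = 0$. Conversely, if $s < s_c$ and one had $\De(Y,\eps,s) < +\infty$, the consequence would force $\De(Y,\eps,s'') = 0$ for every $s'' > s$, contradicting the definition of $s_c$; hence $\De(Y,\eps,s) = +\infty$. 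Uniqueness of $s_c$ is immediate from the two displayed conditions in (\ref{eq:critval}).

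\textbf{Expected obstacle.} There is no real obstacle — the argument is the standard ``Hausdorff dimension'' type dichotomy. The only point requiring a little care is handling the boundary cases ($s_c = 0$ or $s_c = +\infty$): when $s_c = 0$ the condition $s < s_c$ is vacuous for $s \geq 0$ and nothing needs to be checked there, and when $s_c = +\infty$ the condition $s > s_c$ is vacuous; both cases are automatically compatible with the proposed definition of $s_c$.
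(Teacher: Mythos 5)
Your proof is correct and follows essentially the same route as the paper: the same key estimate $M(C,s')\leq N^{-(s'-s)}M(C,s)$ for coverings in $\jC_{\geq N}(Y,\eps)$, yielding that finiteness of $\De(Y,\eps,s)$ forces $\De(Y,\eps,s')=0$ for $s'>s$, after which $s_c$ is taken as the infimum of the vanishing set. The only (cosmetic) difference is that you pass directly to the infimum $\de(Y,\eps,s',N)\leq N^{-(s'-s)}\de(Y,\eps,s,N)$, whereas the paper uses a near-optimal covering $C_N$ with $M(C_N,s)\leq\De(Y,\eps,s)+1$.
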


\begin{proof} Assume that $\De(Y,\eps,s)<+\infty$ for a given value of $s\in[0,+\infty[$, and fix $s'>s$.
Let $C=\big(B_{n_i}(x_i,\eps)\big)_{i\in I}$ be a  covering  in $\jC_{\geq N}(Y,\eps)$. Then:
$$
M(C,s')=\sum_{i\in I} \Big(\Frac{1}{n_i}\Big)^{s'}=\sum_{i\in I} \Big(\Frac{1}{n_i}\Big)^{s}\Big(\Frac{1}{n_i}\Big)^{s'-s}
\leq
\Big(\Frac{1}{N}\Big)^{s'-s}M(C,s).
$$
By definition, for every $N\in\N^*$, there exists a covering $C_N\in\jC_{\geq N}(Y,\eps)$ such that
$$
M(C_N,s)\leq  \De(Y,\eps,s)+1
$$
and therefore
$$
\de(Y,\eps,s',N)\leq M(C_N,s')\leq \Big(\Frac{1}{N}\Big)^{s'-s}\Big(\De(Y,\eps,s)+1\Big)
$$
which shows that $\De(Y,\eps,s')=\lim_{N\to\infty}\de(Y,\eps,s',N)=0$. This proves that the set of points $s$ such that
$0<\De(Y,\eps,s)<+\infty$ contains at most one element. One also sees that the set of  $s$ such that $\De(Y,\eps,s)=0$
is an interval, of the form $]a,+\infty[$ or $[a,+\infty[$, with $a\geq -\infty$. Analogously, that the set of $s$ such that $\De(Y,\eps,s)=+\infty$ is
of the form $]-\infty,a[$ or $]-\infty,a]$. Therefore
$$
s_c(Y,\eps)=\Inf\{s\in [0,+\infty]\mid \De(Y,\eps,0)=0\}=\Sup\{s\in [0,+\infty]\mid \De(Y,\eps,0)=+\infty\}
$$
(with the obvious convention on $\Inf$ and $\Sup$ of the empty set)
satisfies conditions (\ref{eq:critval}). Uniqueness is then obvious.
\end{proof}

\parag Remark now that $s_c(Y,\eps)\leq s_c(Y,\eps')$ when $\eps'<\eps$. This allows us to state the following definition.

\begin{Def}
We define the  {\em weak complexity index} $\LF(\ph,Y)$ of $\ph$ on the subset $Y$ as the limit of the critical
value $s_c(Y,\eps)$ when $\eps$ goes to $0$:
$$
\LF(\ph,Y):=\lim_{\eps\to 0} s_c(Y,\eps)=\Sup_{\eps>0}s_c(Y,\eps)\in[0,+\infty].
$$
\end{Def}


\subsection{The complexity index}
We now define the complexity index in much the same way as before, as well as other essentially equivalent quantities.

\parag
We keep the notation of the previous section. Given $Y\subset X$ and $N\in\N^*$, we now denote by $\jC_{= N}(Y,\eps)$
the set of all coverings of $Y$ of the form $\big(B_{N}(x_i,\eps)\big)_{i\in I}$, so now the balls all have the same order $N$.
Clearly $\jC_{= N}(Y,\eps)\subset\jC_{\geq N}(Y,\eps)$.

Given $Y\subset X$ and $\eps>0$, $s\geq 0$ and $N\in\N$, we set
$$
\ga(Y,\eps,s,N)=\Inf\big\{M(C,s)\mid C\in \jC_{= N}(Y,\eps)\big\}=G_N(Y,\eps) \Big(\Frac{1}{N}\Big)^s.
$$
Note that $\ga(Y,\eps,s,N)$ may have no limit when $N\to\infty$, so
we are now led to introduce {\em two} limiting quantities:
$$
\un \Ga(Y,\eps,s)=\liminf_{N\to+\infty}\ga(Y,\eps,s,N),\qquad\ov \Ga(Y,\eps,s)=\limsup_{N\to+\infty}\ga(Y,\eps,s,N).
$$
As in Lemma \ref{lem:critval}, one checks that there exists  critical values $\un s_c(Y,\eps), \ov s_c(Y,\eps)$
such that
$$
\begin{array}{ll}
\phantom{\Frac{A^A}{A^A}}&\un \Ga(Y,\eps,s)=0\ \text{if}\ s>\un s_c(Y,\eps)
\quad \text{and}\quad
\un \Ga(Y,\eps,s)=+\infty\ \text{if}\ s<\un s_c(Y,\eps);\phantom{\Frac{A^A}{A^A}}\\
&\ov \Ga(Y,\eps,s)=0\ \text{if}\ s>\ov s_c(Y,\eps)
\quad \text{and}\quad
\ov \Ga(Y,\eps,s)=+\infty\ \text{if}\ s<\ov s_c(Y,\eps),\\
\end{array}
$$

The following lemma is immediate.
\begin{lemma} One has the inequality $\ov s_c(Y,\eps)\geq \un s_c(Y,\eps)$ and the following properties hold true
$$
\begin{array}{lll}
\ov s_c(Y,\eps)=\Inf\{\sig\geq0\mid \lim_{n\to\infty}\Frac{1}{n^\sig}G_n(Y,\eps)=0\},\\
\un s_c(Y,\eps)=\Sup\{\sig\geq0\mid \lim_{n\to\infty}\Frac{1}{n^\sig}G_n(Y,\eps)=+\infty\}.
\end{array}
$$
\end{lemma}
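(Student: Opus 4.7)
The plan is first to establish the existence of the two critical values $\un s_c(Y,\eps)$ and $\ov s_c(Y,\eps)$ by repeating verbatim the dichotomy argument of Lemma~\ref{lem:critval}, then to deduce the inequality from the inequality $\liminf\leq\limsup$, and finally to obtain the two alternative descriptions from elementary observations on non-negative sequences.

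\textbf{Step 1 (existence of critical values).} I would reuse the computation of Lemma~\ref{lem:critval} with the single-scale quantity $\ga(Y,\eps,s,N)=G_N(Y,\eps)N^{-s}$: for $s'>s$ one has $\ga(Y,\eps,s',N)\leq N^{-(s'-s)}\ga(Y,\eps,s,N)$. Passing to the $\limsup$ (resp.\ the $\liminf$) gives that if $\ov\Ga(Y,\eps,s)<+\infty$ (resp.\ $\un\Ga(Y,\eps,s)<+\infty$) then $\ov\Ga(Y,\eps,s')=0$ (resp.\ $\un\Ga(Y,\eps,s')=0$). This yields, exactly as in the previous lemma, the existence and uniqueness of $\ov s_c(Y,\eps)$ and $\un s_c(Y,\eps)$ with the stated properties.

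\textbf{Step 2 (the inequality).} Since $\un\Ga(Y,\eps,s)\leq \ov\Ga(Y,\eps,s)$ for every $s\geq 0$, any $s$ with $\ov\Ga(Y,\eps,s)=0$ also satisfies $\un\Ga(Y,\eps,s)=0$, hence must fulfill $s\geq \un s_c(Y,\eps)$. Taking the infimum of such $s$ gives $\ov s_c(Y,\eps)\geq \un s_c(Y,\eps)$.

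\textbf{Step 3 (the two formulas).} The key observation is that for a non-negative sequence $(u_N)$ one has $\limsup u_N=0$ if and only if $\lim u_N=0$, and $\liminf u_N=+\infty$ if and only if $\lim u_N=+\infty$. Applied to $u_N=G_N(Y,\eps)/N^\sig$, this gives
$$
\ov\Ga(Y,\eps,\sig)=0\iff \lim_{n\to\infty}\Frac{1}{n^\sig}G_n(Y,\eps)=0,
$$
$$
\un\Ga(Y,\eps,\sig)=+\infty\iff \lim_{n\to\infty}\Frac{1}{n^\sig}G_n(Y,\eps)=+\infty.
$$
Combining with the characterization of the critical values in Step~1 immediately yields
$$
\ov s_c(Y,\eps)=\Inf\big\{\sig\geq 0\mid \lim_{n\to\infty}n^{-\sig}G_n(Y,\eps)=0\big\},
$$
$$
\un s_c(Y,\eps)=\Sup\big\{\sig\geq 0\mid \lim_{n\to\infty}n^{-\sig}G_n(Y,\eps)=+\infty\big\}.
$$

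No step looks delicate here; the only thing to be careful about is not to confuse $\liminf=0$ (which is strictly weaker than $\lim=0$) with $\limsup=0$ (which is equivalent to $\lim=0$). That elementary distinction is precisely what forces the introduction of two distinct critical values and drives the proof of the two formulas.
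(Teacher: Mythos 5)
Your proof is correct and is precisely the elementary argument the paper has in mind: the paper gives no written proof (it declares the lemma immediate after observing that the dichotomy of Lemma \ref{lem:critval} applies to $\ga(Y,\eps,s,N)$), and your Steps 1--3 fill in exactly those details, with the right emphasis on $\limsup u_N=0\iff\lim u_N=0$ and $\liminf u_N=+\infty\iff\lim u_N=+\infty$ for non-negative sequences. The only phrase worth tightening is ``passing to the liminf'' in Step 1: for the lower index one should extract a subsequence $N_k$ along which $\ga(Y,\eps,s,N_k)$ tends to the finite value $\un\Ga(Y,\eps,s)$, so that $\ga(Y,\eps,s',N_k)=N_k^{-(s'-s)}\ga(Y,\eps,s,N_k)\to 0$ and hence $\un\Ga(Y,\eps,s')=0$ --- which is clearly what you intend, so this is a wording issue, not a gap.
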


\parag As in the previous section one sees that $\ov s_c(Y,\eps)$ and $\un s_c(Y,\eps)$ are monotone non-increasing functions of $\eps$.
We define the  {\em upper and lower complexity indices} $\ov\L(\ph,Y)$ and $\un\L(\ph,Y)$ of $\ph$ on the subset $Y$
as the following limits:
$$
\ov\L(\ph,Y):=\lim_{\eps\to 0} \ov s_c(Y,\eps),\qquad \un\L(\ph,Y):=\lim_{\eps\to 0} \un s_c(Y,\eps).
$$
One could also define complexity indices by means of the following limits:
$$
\L^{\bullet}(\ph,Y)=\lim_{\eps\to 0}\limsup_{n\to\infty}\Frac{\Log G_n(Y,\eps)}{\Log n},
\qquad
\L_{\bullet}(\ph,Y)=\lim_{\eps\to 0}\liminf_{n\to\infty}\Frac{\Log G_n(Y,\eps)}{\Log n}.
$$

\begin{lemma} The following relations hold true
$$
\L^*(\ph,Y)\leq \un \L(\ph,Y)=\L_{\bullet}(\ph,Y)\leq \ov \L(\ph,Y)=\L^{\bullet}(\ph,Y).
$$
\end{lemma}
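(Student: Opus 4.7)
The chain consists of three assertions: (a)~$\L^{*}\le\un\L$, (b)~$\un\L\le\ov\L$ (trivial), and (c)~the two equalities $\un\L=\L_{\bullet}$ and $\ov\L=\L^{\bullet}$. Parts (a) and (c) are the substantive ones, but both reduce to elementary manipulations using the previous lemma and the definitions. I will work at fixed $\eps>0$ throughout and only pass to the limit $\eps\to 0$ at the very end.

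\textbf{Step 1 (Proof of (a)).} The inclusion $\jC_{=N}(Y,\eps)\subset\jC_{\ge N}(Y,\eps)$ implies $\de(Y,\eps,s,N)\le \ga(Y,\eps,s,N)$ for every $N$ and $s\ge 0$. Moreover $\jC_{\ge N'}(Y,\eps)\subset \jC_{\ge N}(Y,\eps)$ when $N'\ge N$, so $N\mapsto\de(Y,\eps,s,N)$ is non-decreasing, hence
$$
\De(Y,\eps,s)=\lim_{N\to\infty}\de(Y,\eps,s,N)\le \liminf_{N\to\infty}\ga(Y,\eps,s,N)=\un\Ga(Y,\eps,s).
$$
If $s>\un s_c(Y,\eps)$ then $\un\Ga(Y,\eps,s)=0$, which forces $\De(Y,\eps,s)=0$ and thus $s\ge s_c(Y,\eps)$. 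Taking the infimum over such $s$ yields $s_c(Y,\eps)\le\un s_c(Y,\eps)$, and letting $\eps\to 0$ (recall both critical values are monotone non-increasing in $\eps$) gives $\L^{*}(\ph,Y)\le\un\L(\ph,Y)$.

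\textbf{Step 2 (Proof of (b)).} Since $\liminf\le\limsup$, we have $\un\Ga(Y,\eps,s)\le\ov\Ga(Y,\eps,s)$; the same argument as above then yields $\un s_c(Y,\eps)\le\ov s_c(Y,\eps)$ and, passing to the limit in $\eps$, $\un\L(\ph,Y)\le\ov\L(\ph,Y)$.

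\textbf{Step 3 (Proof of (c)).} Setting $a_N=G_N(Y,\eps)$, the previous lemma together with $\ga(Y,\eps,s,N)=a_N/N^s$ gives
$$
\ov s_c(Y,\eps)=\Inf\bigl\{\sig\ge 0\mid \lim_{N\to\infty} a_N/N^\sig=0\bigr\}.
$$
For a non-negative sequence, $\lim a_N/N^\sig=0$ is equivalent to $\limsup a_N/N^\sig=0$, which standard estimates show to be equivalent to $\sig>\limsup_{N}\log a_N/\log N$. Therefore
$$
\ov s_c(Y,\eps)=\limsup_{N\to\infty}\frac{\Log G_N(Y,\eps)}{\Log N}.
$$
The same argument (with $\liminf$ in place of $\limsup$, using the characterization $\sig<\liminf\log a_N/\log N$ iff $\liminf a_N/N^\sig=+\infty$) gives
$$
\un s_c(Y,\eps)=\liminf_{N\to\infty}\frac{\Log G_N(Y,\eps)}{\Log N}.
$$
Taking $\eps\to 0$ produces the two claimed equalities $\ov\L=\L^{\bullet}$ and $\un\L=\L_{\bullet}$.

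\textbf{Main obstacle.} There is no genuine difficulty: the only point that deserves care is the monotonicity argument in Step~1 that converts the inclusion of covering classes into the inequality $\De\le\un\Ga$ (and not merely $\De\le\ov\Ga$), which relies crucially on the fact that $\de(Y,\eps,s,\cdot)$ is monotone in $N$ whereas $\ga$ is not. The remaining equivalences in Step~3 are straightforward exercises on $\liminf$ and $\limsup$ of power-type sequences.
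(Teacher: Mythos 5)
Your proposal is correct and follows essentially the same route as the paper: the two equalities are obtained from the elementary identification of $\ov s_c(Y,\eps)$ and $\un s_c(Y,\eps)$ with $\limsup$ and $\liminf$ of $\Log G_n(Y,\eps)/\Log n$ (the paper leaves these as "easy exercises," which you carry out), and the inequality $\L^*\leq\un\L$ comes from the inclusion $\jC_{=N}(Y,\eps)\subset\jC_{\geq N}(Y,\eps)$, giving $\de\leq\ga$ and hence $\De\leq\un\Ga$, exactly as in the paper. Your extra remark on the monotonicity of $\de(Y,\eps,s,\cdot)$ is a correct and useful clarification of why the comparison lands on the $\liminf$ rather than only the $\limsup$; the only slight imprecision is the claim that $\lim_N a_N/N^\sig=0$ is \emph{equivalent} to $\sig>\limsup\Log a_N/\Log N$ (equivalence can fail at the boundary value), but this does not affect the computation of the infimum.
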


\begin{proof}
The two equalities $\un \L(\ph,Y)=\L_{\bullet}(\ph,Y)$ and $\ov \L(\ph,Y)=\L^{\bullet}(\ph,Y)$ are direct consequences of
the equalities
$$
\liminf_{n\to\infty}\Frac{\Log G_n(Y,\eps)}{\Log n}=\un s_c(Y,\eps),\qquad
\limsup_{n\to\infty}\Frac{\Log G_n(Y,\eps)}{\Log n}=\ov s_c(Y,\eps),
$$
valid for all $\eps>0$, the proof of which are easy exercises.
The inequality $\L_{\bullet}(\ph,Y)\leq \L^{\bullet}(\ph,Y)$ is immediate. It only remains to check the first inequality, which
comes from the inclusion $\jC_{=N}(Y,\eps)\subset \jC_{\geq N}(Y,\eps)$, which immediately yields $\de(Y,\eps,s,N)\leq\ga(Y,\eps,s,N)$
and therefore $\Delta(Y,\eps,s)\leq \un\Ga(Y,\eps,s)$.
\end{proof}

\parag
It turns out that the lower and upper indices $\un \L$ and $\ov \L$  essentially exhibit the same behaviour in our examples, so we will
mainly focus on the upper index $\ov \L$ and introduce the following abbreviate definition.

\begin{Def} With the previous assumptions and notation, we define the {\em complexity index} of $\ph$ on the subset $Y$ as
$$
\L(\ph,Y):=\ov\L(\ph,Y)=\L^{\bullet}(\ph,Y).
$$
\end{Def}

In the following, we nevertheless indicate some properties of the lower index too, when they are straightforward.


\subsection{Main properties of the complexity indices}
We begin with the naturalness properties shared by all indices (as well as by the topological entropy).
In the following proposition the symbol $\LP$ indifferently stands for $\L$, $\L^*$ or $\un \L$. When necessary,
we recall the metric on the ambient space with a subscript.

\vskip3mm

\begin{prop}\label{prop:naturality} {\bf (Naturalness). }
 Let $(X,d)$ be compact and $\ph:X\to X$ be a continuous map. Then
 the following properties hold true.

\vskip1mm
\noindent {\bf (1) Invariance.}  $\LP$ is a $C^0$ conjugacy invariant and does not depend on the choice of topologically equivalent
metrics on $X$.

\vskip2mm

\noindent {\bf (2) Factors.} If $(X',d')$ is another compact metric space and if $\psi:X'\to X'$ is a factor of $\ph$,
that is if there exists a continuous surjective map $h:X\to X'$ such that $\psi\circ h=h\circ \ph$,
then
$$
\L^+(\ph)\geq\L^+(\psi).
$$

\vskip2mm

\noindent {\bf (3) Restriction.} If $Y\subset X$ is {\em invariant} under $\ph$ and endowed with the induced metric, then
$$
\LP_{\!\!  d}(\ph,Y)=\LP_{\!\! \ha d}(\ph_{\vert Y}).
$$
where $\ha d$ stands for the induced metric on $Y$.
\vskip2mm

\noindent {\bf (4) Monotonicity.}
If $Y\subset Y'$ are two subsets of $X$, then
$$
\LP(\ph,Y)\leq \LP(\ph,Y').
$$

\vskip2mm

\noindent {\bf (5) Transport.} For any $Y\subset X$:
$$
 \LP\big(\ph,\ph(Y)\big)\leq\LP(\ph,Y).
$$
As a consequence, if $\ph$ is a homeomorphism, then
$
 \LP\big(\ph,\ph(Y)\big)=\LP(\ph,Y)
$.
\end{prop}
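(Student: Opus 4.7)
\medskip

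\noindent\textbf{Plan of proof.} All five assertions follow the same general pattern: each naturality statement is first reduced to a suitable comparison between the covering numbers $G_n(Y,\eps)$ (or the sums $M(C,s)$ in the case of $\LF$) on the two sides of the inequality, after which the passage to the critical values $s_c$, $\un s_c$, $\ov s_c$ and then to the limit $\eps\to 0$ is immediate from the definitions.

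The easiest item is \textbf{(4) Monotonicity}: every $(n,\eps)$-covering of $Y'$ is, a fortiori, one of $Y$, hence $G_n(Y,\eps)\leq G_n(Y',\eps)$ and $\jC_{\geq N}(Y',\eps)\subset\jC_{\geq N}(Y,\eps)$ (any covering of $Y'$ covers $Y\subset Y'$), so $\de(Y,\eps,s,N)\leq\de(Y',\eps,s,N)$; the inequalities on $\De,\ov\Ga,\un\Ga$, then on the critical values and their limits, follow at once. For \textbf{(3) Restriction}, one compares coverings of $Y\subset X$ whose balls are centered anywhere in $X$ with coverings of $(Y,\hat d)$ whose balls are centered in $Y$: any ball $B_n^\ph(x,\eps)$ meeting $Y$, upon picking $y$ in the intersection, is contained in the $\hat d_n^{\ph|_Y}$-ball of radius $2\eps$ around $y$, and conversely any $(n,\eps)$-ball for $\hat d_n^{\ph|_Y}$ is the trace on $Y$ of the corresponding ball for $d_n^\ph$. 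This sandwiches the two covering numbers up to a factor $2$ in $\eps$, which is absorbed in the limit $\eps\to0$. Item \textbf{(1) Invariance} is then immediate: two metrics on $X$ defining the same topology are automatically uniformly equivalent (compactness), so balls in one are contained in balls in the other at the price of changing $\eps$, and this distortion disappears in the limit; for $C^0$-conjugacy $\sig:X\to X'$, one pulls back $d'$ by $\sig$ to an equivalent metric on $X$ and reduces to the metric-independence just proved.

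For \textbf{(2) Factors}, the key ingredient is the uniform continuity of the semiconjugacy $h:X\to X'$ on the compact space $X$: given $\eta>0$, choose $\eps>0$ such that $d(a,b)<\eps$ implies $d'(h(a),h(b))<\eta$. Since $\psi\circ h=h\circ\ph$, iteration yields $d_n^\ph(a,b)<\eps\Rightarrow d_n^\psi(h(a),h(b))<\eta$ for every $n$. Pushing forward an $(n,\eps)$-covering of $X$ by $h$ therefore produces an $(n,\eta)$-covering of $h(X)=X'$, whence $G_n^\psi(X',\eta)\leq G_n^\ph(X,\eps)$ and $\de(X',\eta,s,N)\leq\de(X,\eps,s,N)$. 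Passing to critical values and letting $\eta\to 0$ (forcing $\eps\to 0$) gives $\LP(\psi)\leq\LP(\ph)$.

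The main technical point is \textbf{(5) Transport}. The elementary observation is that if $d_{n+1}^\ph(x,x_0)<\eps$, then $d_n^\ph(\ph(x),\ph(x_0))=\max_{0\leq k\leq n-1}d(\ph^k(\ph(x)),\ph^k(\ph(x_0)))=\max_{1\leq k\leq n}d(\ph^k(x),\ph^k(x_0))<\eps$. Hence an $(n+1,\eps)$-covering $(B_{n+1}^\ph(x_i,\eps))_{i\in I}$ of $Y$ yields an $(n,\eps)$-covering $(B_n^\ph(\ph(x_i),\eps))_{i\in I}$ of $\ph(Y)$, so
$$
G_n(\ph(Y),\eps)\leq G_{n+1}(Y,\eps).
$$
For $\un\L$ and $\L$ the inequality on the indices follows because $\log(n+1)/\log n\to 1$. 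For $\LF$, the same construction sends a family $C\in\jC_{\geq N+1}(Y,\eps)$ to a family $C'\in\jC_{\geq N}(\ph(Y),\eps)$ with $M(C',s)\leq 2^sM(C,s)$, using $1/n\leq 2/(n+1)$; taking the infimum and then the limit gives $\De(\ph(Y),\eps,s)\leq 2^s\De(Y,\eps,s)$, hence $s_c(\ph(Y),\eps)\leq s_c(Y,\eps)$ and $\LF(\ph,\ph(Y))\leq\LF(\ph,Y)$. When $\ph$ is a homeomorphism, applying the same argument to $\ph^{-1}$ yields the reverse inequality. The only slightly delicate point here is checking that the harmless shift $n\leftrightarrow n+1$ and the factor $2^s$ truly vanish at the level of the asymptotic quantities; this is exactly the robustness built into the definitions of the critical values.
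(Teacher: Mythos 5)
Your proposal is correct in substance and follows essentially the same route as the paper's proof: monotonicity and restriction via the same ball comparisons (with the factor $2$ in $\eps$ absorbed in the limit $\eps\to 0$), invariance and factors by uniform continuity exactly as for topological entropy, and transport via the inclusion $\ph\big(B_{n+1}(x,\eps)\big)\subset B_n\big(\ph(x),\eps\big)$, the index shift and your factor $2^s$ (the paper uses $\big(\frac{N-1}{N}\big)^s$ instead) being harmless at the level of the critical values since $\De$ only takes the values $0$ and $+\infty$ away from $s_c$. The one step that needs repair is the last sentence of (5): ``applying the same argument to $\ph\inv$'', read literally, yields $\LP(\ph\inv,\ph\inv(Z))\leq \LP(\ph\inv,Z)$, i.e.\ a statement about the indices of $\ph\inv$, whereas the dynamical metrics entering $\LP(\ph,\cdot)$ are still built from forward iterates of $\ph$; the elementary observation you used does not transfer as such, because pulling back by $\ph\inv$ creates a new time-zero term $d(\ph\inv(a),\ph\inv(b))$ that is not controlled by $d_n^\ph(a,b)$. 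The correct argument for the reverse inequality $\LP(\ph,Y)\leq\LP(\ph,\ph(Y))$ transports coverings of $\ph(Y)$ back by $\ph\inv$ and uses the uniform continuity of $\ph\inv$ on the compact $X$: choosing $\de\leq\eps$ with $d(a,b)<\de\Rightarrow d(\ph\inv(a),\ph\inv(b))<\eps$, one gets $\ph\inv\big(B_n(z,\de)\big)\subset B_{n+1}\big(\ph\inv(z),\eps\big)$, hence $G_{n+1}(Y,\eps)\leq G_n\big(\ph(Y),\de\big)$ and the analogous comparison of $M(C,s)$ for coverings in $\jC_{\geq N}(\ph(Y),\de)$; the replacement of $\eps$ by $\de$ disappears when taking the supremum over $\eps$. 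With this modification your proof is complete (the paper itself only asserts the equality as an immediate consequence, so here you are actually supplying a detail it omits).
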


\vskip4mm
\begin{proof} The proofs of the invariance and factor properties (1) and (2) go exactly along the same lines as for the topological entropy,
see \cite{HK}. The proof of the monotonicity property (4) is trivial. We give a sketch of proof of the other properties, for which we could not
find an explicit reference.

\vskip2mm
$\bullet$ We will prove the restriction property (3) for the complexity index $\L$, the proof for the other indices being essentially the same.
Consider a subset $Y\subset X$ invariant under $\ph$, endowed with
the metric $\ha d$ induced by $d$, and denote by $\ha d_n$ the metric of order $n$ defined on $Y$  by the restriction
$\ph_{\vert Y}$.  Remark that for $y,y'\in Y$  and $n\in\N^*$, $\ha d_n(y,y')=d_n(y,y')$. Thus for $\eps>0$ and $y\in Y$,
the ball $\ha B_n(y,\eps)\subset Y$ for the metric $\ha d_n$ satisfies
$$
\ha B_n(y,\eps)=Y\cap B_n(y,\eps).
$$
Let us write $\ha G_n(\eps)$ for the minimal number of $(n,\eps)$ balls for $\ha d_n$ in a covering of $Y$,
and $G_n(Y,\eps)$ for the minimal number of $(n,\eps)$ balls for $d_n$ (not necessarily centered on $Y$) in a covering of $Y$.
Clearly the previous remark shows that
$$
G_n(Y,\eps)\leq \ha G_n(\eps),\qquad \forall n\in\N^*,
$$
and, as a consequence, one sees that $\L(\ph,Y)\leq \L(\ph_{\vert Y})$. Conversely, one also sees that if
$B_n(x,\eps)\cap Y\neq\emptyset$ for some $x\in X$, then
for all $y\in B_n(x,\eps)\cap Y$
$$
B_n(x,\eps)\cap Y\subset \ha B(y,2\eps).
$$
Therefore $\ha G_n(2\eps)\leq G_n(Y,\eps)$,  $\forall n\in\N^*$, from which one  deduces that
$\L(\ph_{\vert Y})\leq \L(\ph,Y)$. This concludes the proof of the restriction property for the complexity index.

\vskip2mm
$\bullet$ For proving (5), first remark that for $x\in X$,  $n\geq 2$ and $\eps>0$:
$$
\ph\big(B_n(x,\eps)\big)\subset B_{n-1}(\ph(x),\eps),
$$
so
$$
G_{n-1}\big(\ph(Y),\eps\big)\leq G_(Y,\eps),
$$
which proves the property for $\L$ and $\un \L$.
As for the weak complexity index, for each covering
$C=(B_{n_i}(x_i,\eps))_{i\in I}\in \jC_{\geq N}(Y,\eps)$ with $N\geq 2$, the direct image
$\ph_*C$, which we denine as the covering $(B_{n_i-1}(\ph(x),\eps))_{i\in I}$,
belongs to $\jC_{\geq N-1}(\ph(Y),\eps)$
and satisfies
$$
\big(\frac{N-1}{N}\big)^sM(\ph_*C,s)\leq M(C,s)\leq M(\ph_*C,s).
$$
Therefore, in particular:
$$
\big(\frac{N-1}{N}\big)^s\de(\ph(Y),\eps,s,N-1)\leq\de(Y,\eps,s,N),
$$
which proves that
$$
\De\big(\ph(Y),\eps,s)\leq \De\big(Y,\eps,s),\quad \forall s\in\R,\ \forall \eps\in\R^{*+},
$$
and so $\LF(\ph,\ph(Y))\leq \LF(\ph,Y)$.
\end{proof}


The following product formula is useful for examples, while the power formula proves that the behaviour of the complexity indices
is genuinely different from that of the topological entropy.

\begin{prop}\label{prop:constructions}{\bf (Constructions).}
\vskip2mm
\noindent {\bf (1) Products.} If $\ph:X\to X$ and $\psi : X'\to X'$ are continuous, then
$$
 \L(\ph\times\psi)=\L(\ph)+\L(\psi),\qquad  \un\L(\ph\times\psi)\geq\un\L(\ph)+\un\L(\psi).
$$
\vskip2mm
\noindent {\bf (2) Powers.} If $\ph:X\to X$ is continuous, then for all $m\in\N^*$
$$
\LP(\ph^m)=\LP(\ph).
$$
\end{prop}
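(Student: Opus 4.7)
The plan is to compare coverings and $(n,\eps)$-separated sets for the product or iterated system to those of the factors, then pass to the limits in $n$ and $\eps$.

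For (1), equip $X\times X'$ with the max metric $\rho = \max(d,d')$. A direct computation yields
\[
\rho_n^{\ph\times\psi}\big((x,x'),(y,y')\big) = \max\big(d_n^\ph(x,y),\, d_n^\psi(x',y')\big),
\]
so that open $(n,\eps)$-balls for $\ph\times\psi$ are exact products of $(n,\eps)$-balls in each factor. Products of minimal coverings and of maximal separated sets therefore give $G_n^{\ph\times\psi}(\eps) \le G_n^\ph(\eps)\,G_n^\psi(\eps)$ and $S_n^{\ph\times\psi}(\eps) \ge S_n^\ph(\eps)\,S_n^\psi(\eps)$ (with the obvious superscript meaning). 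Taking $\log$, dividing by $\log n$, and applying $\limsup(a_n+b_n)\le \limsup a_n+\limsup b_n$ to the covering estimate yields $\ov s_c^{\ph\times\psi}(\eps)\le \ov s_c^\ph(\eps)+\ov s_c^\psi(\eps)$, whence $\L(\ph\times\psi)\le \L(\ph)+\L(\psi)$ after $\eps\to 0$. Applying instead $\liminf(a_n+b_n)\ge \liminf a_n+\liminf b_n$ to the separated-set estimate produces the stated lower bound for $\un\L$.

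The reverse inequality $\L(\ph\times\psi)\ge \L(\ph)+\L(\psi)$ is the main obstacle: the separated-set estimate only yields $\ov s_c^{\ph\times\psi}(\eps)\ge \ov s_c^\ph(\eps)+\un s_c^\psi(\eps)$, since in general $\limsup(a_n+b_n)\ge \limsup a_n+\liminf b_n$ is the best one can hope for. Promoting this $\liminf$ to a $\limsup$ requires producing, for a fixed small $\eps$, a common subsequence of integers $n$ along which both $\log S_n^\ph(\eps)/\log n$ and $\log S_n^\psi(\eps)/\log n$ approach their respective $\ov s_c(\eps)$. My approach would be a diagonal extraction: choosing $\eps_k\to 0$ and $\sigma_k^\ph\uparrow\L(\ph)$, $\sigma_k^\psi\uparrow\L(\psi)$, use the characterization of $\ov s_c$ to select $n_k$ large enough that both $S_{n_k}^\ph(\eps_k)\ge n_k^{\sigma_k^\ph}$ and $S_{n_k}^\psi(\eps_k)\ge n_k^{\sigma_k^\psi}$ hold simultaneously (each condition is satisfied on an infinite set of $n$'s, so a diagonal choice is possible).

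For (2), uniform continuity of $\ph,\dots,\ph^{m-1}$ on the compact $X$ provides $\eta=\eta(\eps)>0$ with $\eta\to 0$ as $\eps\to 0$, and the inclusions
\[
B_n^{\ph^m}(x,\eta) \;\subset\; B_{mn}^\ph(x,\eps) \;\subset\; B_n^{\ph^m}(x,\eps)
\]
hold for every $(x,n)$; the right inclusion is immediate from $d_n^{\ph^m}\le d_{mn}^\ph$, while the left one follows because $d(\ph^{mk}x,\ph^{mk}y)\le\eta$ together with $0\le r\le m-1$ implies $d(\ph^{mk+r}x,\ph^{mk+r}y)\le\eps$. These give $G_n^{\ph^m}(\eps)\le G_{mn}^\ph(\eps)\le G_n^{\ph^m}(\eta)$. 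For $\L$: since $k\mapsto G_k^\ph(\eps)$ is non-decreasing and $\log(mn)/\log n\to 1$, one checks that $\limsup_k \log G_k^\ph(\eps)/\log k = \limsup_n \log G_{mn}^\ph(\eps)/\log(mn)$, so the double inequality squeezes $\ov s_c^\ph(\eps)$ between $\ov s_c^{\ph^m}(\eps)$ and $\ov s_c^{\ph^m}(\eta)$; letting $\eps\to 0$ (whence $\eta\to 0$) gives equality. For $\L^*$ and $\un\L$, I transport coverings directly: a covering $C=(B_{n_i}^\ph(x_i,\eps))_i$ for $\ph$ with $n_i\ge N$ yields $C'=(B_{\lfloor n_i/m\rfloor}^{\ph^m}(x_i,\eps))_i$ for $\ph^m$ with orders $\ge \lfloor N/m\rfloor$ and $M(C',s)\le m^s M(C,s)$; conversely, a covering for $\ph^m$ at scale $\eta$ with orders $\ge N$ gives a covering for $\ph$ at scale $\eps$ with orders $\ge mN$ and $M$-value multiplied by $m^{-s}$. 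Passing to $\De$ and then to the critical value $s_c$, and letting $\eps\to 0$, closes the argument.
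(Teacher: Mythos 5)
Your part (2) is correct and is essentially the paper's own argument: uniform continuity gives the inclusions $B_n^{\ph^m}(x,\eta)\subset B_{mn}^{\ph}(x,\eps)\subset B_n^{\ph^m}(x,\eps)$, hence the two-sided comparison of covering numbers, and the transport of coverings handles $\L^*$; your use of the monotonicity of $n\mapsto G_n(\eps)$ to identify the $\limsup$ (and $\liminf$) along the subsequence $(mn)$ with the full one is a correct way to finish. In part (1), the inequality $\L(\ph\times\psi)\le\L(\ph)+\L(\psi)$ and the lower bound for $\un\L$ also coincide with the paper's proof (product metric, product structure of the dynamical balls, products of separated sets).

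The genuine gap is in the remaining inequality $\L(\ph\times\psi)\ge\L(\ph)+\L(\psi)$, exactly at the step you single out. From $S_n(\ph\times\psi,\eps)\ge S_n(\ph,\eps)\,S_n(\psi,\eps)$ one gets, for every $n$, $\frac{\Log G_n(\ph\times\psi,\eps)}{\Log n}\ge\frac{\Log G_n(\ph,2\eps)}{\Log n}+\frac{\Log G_n(\psi,2\eps)}{\Log n}$, and taking $\limsup$ yields only $\ov s_c(\ph\times\psi,\eps)\ge \ov s_c(\ph,2\eps)+\un s_c(\psi,2\eps)$, as you observe. But your proposed repair does not close this: for fixed $\eps_k$ and $\sigma_k^\ph<\ov s_c(\ph,\eps_k)$, the set $A_k$ of times $n$ with $S_n(\ph,\eps_k)\ge n^{\sigma_k^\ph}$ is only known to be \emph{infinite}, and likewise the set $B_k$ for $\psi$; two infinite subsets of $\N$ need not meet (think of the even and the odd integers), and your diagonal runs over $k$, not over $n$, so nothing produces a common time $n_k\in A_k\cap B_k$. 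As written, your argument proves only $\L(\ph\times\psi)\ge\max\big(\L(\ph)+\un\L(\psi),\,\un\L(\ph)+\L(\psi)\big)$, with equality to $\L(\ph)+\L(\psi)$ guaranteed only when one of the factors satisfies $\un\L=\L$ (so that its exponent is realized along \emph{every} sequence of times). Note that the paper's proof performs no subsequence extraction at all: it derives $G_n(\ph\times\psi,\eps)\ge G_n(\ph,2\eps)\,G_n(\psi,2\eps)$, with the same $n$ on both sides, and concludes the additivity of $\L$ directly from this pointwise inequality, i.e.\ it treats as immediate precisely the $\limsup$-of-a-sum passage you rightly flag as nontrivial. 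So you have isolated a real subtlety, but to claim the full equality you need an argument producing a common sequence of times along which both exponents are simultaneously nearly extremal (this does not follow from the monotonicity or submultiplicativity of $G_n$ alone), or you must content yourself with the weaker bound above.
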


\begin{proof}
$\bullet$ (1) We denote by $\de\times \de'$ the product of any two metrics $\de$ and $\de'$ on $X$ and $X'$.
It is then easy to prove that
$
(d\times d')_n=d_n\times d'_n,\ \forall n\in\N^*.
$,
Therefore for $(x,x')\in X\times X'$ and $\rho>0$, $B_n((x,x'),\rho)=B_n(x,\rho)\times B_n(x',\rho)$ for
$n\geq1$, from which one sees that
$$
G_n(\ph\times \psi)\leq G_n(\ph,\eps)\, G_n(\psi,\eps),\qquad \forall \eps>0.
$$
If $\sig>0$ and $\sig'>0$ are such that
$
\lim_{n\to\infty}\frac{1}{n^\sig}\,G_n(\ph,\eps)=
\lim_{n\to\infty}\frac{1}{n^{\sig'}}\,G_n(\psi,\eps)=0,
$
one sees that
$$
\lim_{n\to\infty}\frac{1}{n^{\sig+\sig'}}\,G_n(\ph\times \psi)=0
$$
which proves that $\ov\sig_c(\ph\times\psi)\leq \ov\sig_c(\ph,\eps)+\ov\sig_c(\psi,\eps)$, so $\L(\ph\times\psi)\leq \L(\ph)+\L(\psi)$.

Conversely,  if the families $(x_p)_{p\in P}$  and  $(x'_{p'})_{p'\in P'}$ of points of
$X$ and $X'$ are $(\eps,n)$--separated, then  the family $(x_p,x'_{p'})_{(p,p')\in P\times P'}$
is $(\eps, n)$ separated in $X\times X'$. This proves that
$$
S_n(\ph\times\psi,\eps)\geq S_n(\ph,\eps)\,S(\psi,\eps)
$$
and therefore
$
G_n(\ph\times\psi,\eps)\geq G_n(\ph,2\eps)\,G_n(\psi,2\eps),
$
which yields
$\L(\phi\times\psi)\geq \L(\ph)+\L(\psi)$. The proof of the inequality for $\un \L$ is analogous.

\vskip2mm

$\bullet$ (2) Note that $d_n^{\ph^m}\leq d_{mn}^\ph$, so $G_{n}(\ph^m,\eps)\leq G_{mn}(\ph,\eps)$ and therefore $\L(\ph^m)\leq \L(\ph)$.
Conversely, by uniform continuity,  given $\eps>0$ there exists $\al>0$ such that $B(x,\al)\subset B_{m}(x,\eps)$ for all $x\in X$.
Then, with obvious notation:
$$
B_n^{\ph^m}(x,\al)=\bigcap_{k=0}^{n-1}\ph^{-km} \big(B(\ph^{km}(x),\al)\big)\subset
\bigcap_{k=0}^{n-1}\ph^{-km} \big(B_m(\ph^{km}(x),\eps)\big)=B_{nm}^\ph(x,\eps).
$$
So $G_{nm}(\ph,\eps)\leq G_n(\ph^m,\de)$, which proves that $\L(\ph)\leq \L(\ph^m)$ and so the equality.
The same holds for $\un \L$.

As for $\LF$, let us first prove that $\LF(\ph^m)\leq\LF(\ph)$. Fix $m\geq1$ and consider a covering $C=(B_{k_i}^\ph(x_i,\eps))\in\jC_{\geq Nm}(\ph,\eps)$ with $N> m$. Then
one easily checks that the family
$\ha C=(B_{[k_i/m]}^{\ph^m}(x_i,\eps))$ (where $[\ ]$ denotes the integer part)
is in $\jC_{\geq n}(\ph^m,\eps)$ and satisfies
$$
M(\ha C,s)\leq \Big(\Frac{Nm}{N-m}\Big)^s  M(C,s).
$$
So
$$
\de(\ph^m,\eps,s,nm)\leq \de(\ph,\eps,s,n)\Big(\Frac{Nm}{N-m}\Big)^s,
$$
and $\De(\ph^m,\eps,s)\leq m^s \De(\ph,\eps,s)$, which immediately yields $\LF(\ph^m)\leq \LF(\ph)$.

To prove the converse inequality, remark that given $\eps>0$ and $m\in\N^*$, with the same convention as above
for $\al$, any covering $\ha C=(B_{n_i}^{\ph^m}(x_i,\al))\in\jC_{\geq N}(\ph^m,\de)$ yields a covering
$C=(B_{mn_i}^{\ph}(x_i,\eps))\in\jC_{\geq Nm}(\ph,\eps)$ which satisfies
$$
M(C,s)=\Frac{1}{m^s} M(\ha C,s),
$$
which easily shows that
$\LF(\ph)\leq\LF(\ph^m)$.
\end{proof}

Now we come to the $\sig$--union property, which is satisfied by $\LF$ only (we will see in the next section a couterexample proving
that $\L$ does not enjoy this property).

\vskip2mm
\begin{prop} {\bf (The $\sig$-union property for $\LF$).}
Let $(Y_m)_{m\in \N}$ be a sequence of subsets of $X$. Then
$$
 \LF\Big(\ph,\bigcup_{m\in\N}Y_m\Big)=\Sup_{m\in\N} \LF(\ph,Y_m).
 $$
\vskip2mm
\end{prop}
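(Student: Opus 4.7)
The plan is to prove the two inequalities separately: the easy direction is $\geq$, which follows from monotonicity (item (4) of Proposition \ref{prop:naturality}); since $Y_m\subset\bigcup_k Y_k$ for every $m$, we get $\LF\bigl(\ph,\bigcup_m Y_m\bigr)\geq \LF(\ph,Y_m)$, whence $\LF\bigl(\ph,\bigcup_m Y_m\bigr)\geq \Sup_m\LF(\ph,Y_m)$.

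The real work is the reverse inequality, and the key idea is the classical dyadic splitting trick that makes Carathéodory-type constructions $\sigma$-subadditive. Set $Y=\bigcup_{m\in\N}Y_m$ and let $s>\Sup_m\LF(\ph,Y_m)$; I want to show that $s\geq \LF(\ph,Y)$, for which it is enough to prove $s_c(Y,\eps)\leq s$ for every fixed $\eps>0$. Since for each $m$ we have $s>\LF(\ph,Y_m)\geq s_c(Y_m,\eps)$, Lemma \ref{lem:critval} gives $\De(Y_m,\eps,s)=0$ for every $m$.

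The main step is then: given $\eta>0$ and $N\in\N^*$, for each $m$ choose a covering $C_m=\bigl(B_{n_i^{(m)}}(x_i^{(m)},\eps)\bigr)_{i\in I_m}\in\jC_{\geq N}(Y_m,\eps)$ with
$$
M(C_m,s)\leq \frac{\eta}{2^{m+1}},
$$
which is possible because $\De(Y_m,\eps,s)=0$ and thus $\de(Y_m,\eps,s,N)=0$. Form the disjoint union $C=\bigsqcup_{m\in\N} C_m$, indexed by $I=\bigsqcup_m I_m$. Since each $C_m$ covers $Y_m$ by balls of order $\geq N$, the family $C$ is a covering of $Y$ by $(n,\eps)$-balls with $n\geq N$, so $C\in\jC_{\geq N}(Y,\eps)$. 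Moreover
$$
M(C,s)=\sum_{m\in\N} M(C_m,s)\leq \sum_{m\in\N}\frac{\eta}{2^{m+1}}=\eta.
$$
Hence $\de(Y,\eps,s,N)\leq \eta$ for every $N\in\N^*$ and every $\eta>0$, which forces $\De(Y,\eps,s)=0$, and so $s_c(Y,\eps)\leq s$ by Lemma \ref{lem:critval}.

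To conclude, letting $\eps\to 0$ yields $\LF(\ph,Y)\leq s$ for every $s>\Sup_m\LF(\ph,Y_m)$, hence $\LF(\ph,Y)\leq\Sup_m\LF(\ph,Y_m)$, which combined with the monotonicity inequality gives the equality. The only delicate point is the bookkeeping around the index set $I$ being countable rather than finite (so that one can genuinely sum an infinite series of nonnegative terms), but this causes no harm because $M(C,s)$ is defined for arbitrary index sets and the inequality $M(C,s)\leq\eta$ makes $C$ admissible for the infimum defining $\de(Y,\eps,s,N)$. This is precisely the feature that distinguishes $\LF$ from $\L$: the latter is defined via coverings of fixed order $N$ where one cannot combine countably many coverings into one of fixed order, which is exactly why the $\sig$-union property fails for $\L$.
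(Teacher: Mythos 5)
Your proof is correct and follows essentially the same route as the paper: monotonicity gives the inequality $\geq$, and for the converse one fixes $s$ above the supremum, uses $\De(Y_m,\eps,s)=0$ to pick coverings $C_m\in\jC_{\geq N}(Y_m,\eps)$ with $M(C_m,s)$ summable (geometrically decaying), and takes the countable disjoint union to bound $\De(Y,\eps,s)$ and conclude $s\geq s_c(Y,\eps)$. The only cosmetic difference is that you force $\De(Y,\eps,s)=0$ via the auxiliary $\eta$, while the paper is content with the bound $\De(Y,\eps,s)\leq 1$, which is already finite and suffices by Lemma \ref{lem:critval}.
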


\begin{proof}
Set $Y=\cup_{m\in\N}Y_m$. Then by the monotonicity property
$$
 \LF(\ph,Y)\geq\Sup_{m\in\N} \LF(\ph,Y_m).
$$
To prove the converse inequality, given $\eps>0$, consider $s>\Sup_{m\in\N} s_c(Y_m,\eps)$, so $\De(Y_m,\eps,s)=0$
for all $m\in\N$.
Therefore, given $N\in\N$, for every index $m$ there exists
$N_m\geq N$ and
$C_m\in\jC_{\geq N_m}(Y_m,\eps)$ such that
$$
\de(Y_m,\eps,s,N_m)\leq \Frac{1}{2^{m+2}}\quad
\text{and}
\quad
M(C_m,s)-\de(Y_m,\eps,s,N_m)\leq \Frac{1}{2^{m+2}},
$$
so
$$
M(C_m,s)\leq \Frac{1}{2^{m+1}}.
$$
Now the union $C=\sqcup_{m\in\N} C_m$ of the previous families is a covering of $Y$ and belongs to $\jC_{\geq N}(Y_m,\eps)$,
therefore
$$
\de(Y,\eps,s,N)\leq M(C,s)=\sum_{m\in\N} M(C_m,s)\leq 1.
$$
This inequaliy holds true for all $N\in\N^*$ and, as a consequence:
$$
\De(Y,\eps,s) \leq 1,
$$
which proves that $s\geq s_c(Y,\eps)$. So $\Sup_{m\in\N} s_c(Y_m,\eps)\geq s_c(Y,\eps)$ and therefore
$\Sup_{n\in\N} \LF(\ph,Y_m)\geq \LF(\ph,Y)$.
\end{proof}

As for the indices $\L$ and $\un \L$, one only has the following trivial properties.
\begin{prop} {\bf (Union properties for $\L$).}
Let $(Y_m)_{m\in \N}$ be a sequence of subsets of $X$. Then
$$
 \L\Big(\ph,\bigcup_{m\in\N}Y_m\Big)\leq\Sup_{m\in\N} \L(\ph,Y_m),\qquad
 \L\Big(\ph,\bigcup_{1\leq m\leq M}Y_m\Big)=\Sup_{1\leq m\leq M} \L(\ph,Y_m).
$$
\end{prop}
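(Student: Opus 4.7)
The plan is to reduce both parts of the proposition to elementary facts about the covering numbers $G_n(Y,\eps)$.

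For the first (countable) inequality, the relevant tool is the monotonicity property (Proposition~\ref{prop:naturality}(4)): each inclusion $Y_m\subset \bigcup_{k\in\N}Y_k$ yields $\L(\ph,Y_m)\leq \L\big(\ph,\bigcup_k Y_k\big)$, and taking the supremum over $m$ gives $\Sup_{m\in\N} \L(\ph,Y_m)\leq \L\big(\ph,\bigcup_m Y_m\big)$. This monotonicity bound is the only nontrivial relation available in the countable setting: the reverse direction (which would coincide with the stated $\leq$) is precisely what fails for $\L$, as announced by the author just before the statement in the remark about the absence of a $\sig$-union principle.

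For the finite union equality, the $\geq$ direction follows from the same monotonicity argument. For the converse $\leq$, I would use sub-additivity of the covering numbers: concatenating optimal $(n,\eps)$-coverings of each $Y_m$ produces an $(n,\eps)$-covering of $\bigcup_{m=1}^M Y_m$, so that
$$
G_n\Big(\bigcup_{m=1}^M Y_m,\eps\Big)\leq \sum_{m=1}^M G_n(Y_m,\eps)\leq M\cdot \Max_{1\leq m\leq M} G_n(Y_m,\eps).
$$
Taking logarithms, dividing by $\Log n$, and passing to $\limsup$ as $n\to\infty$, the additive $\Log M/\Log n$ term vanishes, and a finite pigeonhole argument (see below) gives
$$
\limsup_{n\to\infty}\Max_{1\leq m\leq M}\Frac{\Log G_n(Y_m,\eps)}{\Log n}=\Max_{1\leq m\leq M}\limsup_{n\to\infty}\Frac{\Log G_n(Y_m,\eps)}{\Log n}.
$$
Hence $\ov s_c\big(\bigcup_{m=1}^M Y_m,\eps\big)\leq \Max_{1\leq m\leq M}\ov s_c(Y_m,\eps)$, and letting $\eps\to 0$ closes the inequality.

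The main technical step is the commutation of $\limsup_n$ with $\Max_{1\leq m\leq M}$. Choosing a subsequence $(n_k)$ that realizes the outer $\limsup$ and selecting for each $k$ an index $m_k$ at which the maximum is attained, the finite range $\{1,\ldots,M\}$ forces some $m^\star$ to occur infinitely often among the $m_k$; restricting to that sub-subsequence reduces the left-hand side to a single sequence of the form $\Log G_{n_{k_j}}(Y_{m^\star},\eps)/\Log {n_{k_j}}$, whose $\limsup$ is at most $\ov s_c(Y_{m^\star},\eps)\leq \Max_m\ov s_c(Y_m,\eps)$. Finiteness of the index set is essential here: it is exactly the lack of such pigeonhole control over countable families that obstructs the $\sig$-union property for $\L$.
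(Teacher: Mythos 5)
Your handling is correct, and you have properly flagged the anomaly in the first inequality. As printed, $\L\big(\ph,\bigcup_{m\in\N}Y_m\big)\leq\Sup_{m\in\N}\L(\ph,Y_m)$ cannot be what is meant: monotonicity (Proposition~\ref{prop:naturality}(4)) already gives $\Sup_m\L(\ph,Y_m)\leq\L\big(\ph,\bigcup_m Y_m\big)$, and combined with a genuine $\leq$ this would force the $\sigma$-union equality, which the corollary that immediately follows rules out — and which the gradient example of Proposition~\ref{prop:indseg} contradicts concretely, since there $\L\big(\ph,\,]0,1]\big)=1$ while $\L\big(\ph,\ph^{-n}([\ka,1])\big)=0$ for every $n$. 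The inequality should read $\geq$, and your monotonicity argument establishes exactly that. The paper supplies no proof of this proposition (it is labelled as a trivial property), so there is nothing to compare against verbatim; your route for the finite union is the natural one: $\geq$ by monotonicity, and for $\leq$ the subadditivity bound $G_n\big(\bigcup_{m=1}^M Y_m,\eps\big)\leq M\,\Max_{1\leq m\leq M} G_n(Y_m,\eps)$ followed by the finite pigeonhole argument commuting $\limsup_{n\to\infty}$ with $\Max_{1\leq m\leq M}$. The final passage $\eps\to 0$ commutes with the finite $\Max$ as well, since each $\ov s_c(Y_m,\eps)$ is nonincreasing in $\eps$ and the limit is therefore a supremum. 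You also correctly locate where finiteness of the index set is essential and why its failure obstructs the $\sigma$-union property for $\L$.
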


\subsection{Comparison of the complexity indices}
We already proved the inequality $\LF\leq\un \L\leq \L$.
The following proposition exhibits a very simple example for which the first inequality is  strict,  while the other one is an equality.

\begin{prop}\label{prop:indseg}
Consider on the segment $I=[0,1]$ an arbitrary $C^1$ vector field $X$  which satisfies
$X(0)=X(1)=0$ and $X(x)>0$ for $x\in\,]0,1[$,  and which is decreasing on an interval
$[\ka,1]$ with $0<\ka<1$. Let $\ph$ be the time-one map of $X$. Then
$$
\LF(\ph)=0,\qquad \un\L(\ph)=\L(\ph)=1.
$$
\end{prop}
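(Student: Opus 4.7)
The hypotheses imply that $0$ is a repelling fixed point and $1$ an attracting fixed point of $\ph$ (monotonicity of $X$ on $[\ka,1]$ gives $\ph'(1)\le 1$), and every orbit in $(0,1)$ is heteroclinic from $0$ to $1$. I would fix a reference point $x_0\in(\ka,1)$ and introduce the orbit-defined partition $I_m=[\ph^m(x_0),\ph^{m+1}(x_0))$, $m\in\Z$, which satisfies $\ph^k(I_m)=I_{m+k}$. The Euclidean sizes $|I_m|$ decay to zero as $|m|\to\infty$, and the $d_n^\ph$-diameter of $I_m$ is $\max_{0\le k\le n-1}|I_{m+k}|$; this is uniformly bounded by $D:=\sup_m|I_m|$ and becomes arbitrarily small once $|m|$ is large, both when $m\ge 0$ and when $m+n\le 0$.

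To prove $\L(\ph)\le 1$, I would cover the $n$ repelling annuli $I_{-1},\dots,I_{-n}$ each by $O(1/\eps)$ balls of order $n$, the deep tail $(0,\ph^{-n}(x_0)]$ by $O(1/\eps)$ balls of order $n$ (its $d_n^\ph$-diameter is bounded by $\ph^{-1}(x_0)$), and the attractor side $[x_0,1]$ by a constant $C_\eps$ number of balls of order $n$; this gives $G_n(\eps)=O(n/\eps)$ and hence $\L(\ph)\le 1$. To prove $\LF(\ph)=0$, I would use the same decomposition but take balls of order $n_{-j}:=\max(N,2^j)$ to cover $I_{-j}$: a direct estimate yields
\begin{equation*}
\de([0,1],\eps,s,N)\le\Frac{C}{\eps}\Bigl(\sum_{j\ge 1}\Frac{1}{\max(N,2^j)^s}+\Frac{1}{N^s}\Bigr),
\end{equation*}
and the right-hand side is $O((\log N)/N^s)$, which tends to $0$ as $N\to\infty$ for every $s>0$; hence $s_c(\eps)=0$ for all $\eps>0$ and $\LF(\ph)=0$.

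For the matching lower bound $\un\L(\ph)\ge 1$, I would use the $n$ backward iterates $x_j:=\ph^{-j}(x_0)$ for $0\le j\le n-1$. Since $\ph^j(x_j)=x_0$ and $\ph^j(x_{j'})=\ph^{j-j'}(x_0)<x_0$ whenever $j'>j$, one has $d_n^\ph(x_j,x_{j'})\ge x_0-x_1=:\eta>0$ for all $j\ne j'$, so $\{x_0,\dots,x_{n-1}\}$ is $(n,\eta)$-separated and $S_n(\eps)\ge n$ for every $\eps<\eta$; this yields $\un\L(\ph)\ge 1$, which combined with the previous upper bound gives $\un\L(\ph)=\L(\ph)=1$.

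The main technical point will be the uniform-in-$n$ control of the $d_n^\ph$-diameters of the annuli $I_{-j}$: it rests on the claim that the derivative of $\ph^k$ along an orbit in $I_{-j}$ essentially peaks near $k=j$ and decays afterwards, which is precisely where the hypothesis that $X$ is decreasing on $[\ka,1]$ intervenes (it produces a contraction of $\ph$ on a neighborhood of $1$ and prevents orbits from re-expanding after entering the attracting region). Once this estimate is in place, the rest of the argument reduces to elementary bookkeeping of geometric sums in the definition of $\de([0,1],\eps,s,N)$.
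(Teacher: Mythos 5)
Your architecture is sound and your final claims are all true: the lower bound is exactly the paper's argument (the backward orbit $\ph^{-j}(x_0)$, $0\le j\le n-1$, separated by the gap $x_0-\ph^{-1}(x_0)$), and your mixed-order covering computation for $\LF(\ph)=0$ is a legitimate direct alternative to the paper's route, which instead observes that $\ph$ contracts distances on $[\ka,1]$, hence $\LF(\ph,[\ka,1])=0$, and then invokes the transport and $\sigma$--union properties over $]0,1]=\bigcup_{n}\ph^{-n}([\ka,1])$. The genuine gap lies in the justification of your covering counts, which is the quantitative heart of both the $\L(\ph)\le1$ and the $\LF(\ph)=0$ estimates. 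For the deep tail $(0,\ph^{-n}(x_0)]$ you argue only from a bound on its $d_n^\ph$-diameter; a diameter bound does not control the number of $(n,\eps)$-balls needed, since two nearby points of the tail may separate at different intermediate times. And the ``main technical point'' you defer to --- that $(\ph^k)'$ along an orbit of $I_{-j}$ peaks near $k=j$ and decays afterwards --- is neither established nor the right mechanism: $X$ is an arbitrary positive $C^1$ field on $]0,\ka[$, so no such peaking structure is available there, and in any case the derivative of $\ph^k$ on $I_{-j}$ is unbounded in $j$ (it stretches the tiny interval $I_{-j}$ to an interval of unit-order length), so pointwise derivative bounds cannot by themselves produce a subdivision into $O(1/\eps)$ pieces uniformly in $n$.

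What closes the gap is elementary and uses only monotonicity. Since every $\ph^k$ is increasing, a greedy subdivision of an interval $[a,b]$ into consecutive pieces of $d_n^\ph$-diameter at most $\eps$ uses at most $1+\frac{1}{\eps}\sum_{k=0}^{n-1}\big(\ph^k(b)-\ph^k(a)\big)$ pieces, and for $[a,b]=I_{-j}$ this sum telescopes: $\sum_{k=0}^{n-1}\abs{I_{-j+k}}=\ph^{n-j}(x_0)-\ph^{-j}(x_0)\le 1$. This yields at most $1+1/\eps$ balls per annulus, valid for \emph{every} order $n$ (exactly what your $\LF$ computation requires), and, applied to the deep tail after splitting off the far part $(0,\ph^{-n-s_\eps}(x_0)]$ whose first $n$ images all lie in an interval of length at most $\eps$ (hence one ball), it gives a constant $C_\eps$ for the tail. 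The paper sidesteps the issue differently: it uses the $\eps$-dependent decomposition $[0,\eps]\cup[\eps,1-\eps]\cup[1-\eps,1]$, partitions $[\eps,1-\eps]$ into finitely many pieces each lying in a single $(n,\eps)$-ball for all $n$ (uniform continuity of $\ph,\dots,\ph^{n_0}$, everything being absorbed into $[1-\eps,1]$ after time $n_0$), and pulls these pieces back; the preimage intervals are automatically of length less than $\eps$ because they sit inside $[0,\eps]$. Either repair works; without one of them, your claims ``each $I_{-j}$ by $O(1/\eps)$ balls'' and ``the deep tail by $O(1/\eps)$ balls'' are unsupported as stated.
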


\begin{proof} We first consider the weak complexity index. As $X$ decreases on $[\ka,1]$, one sees that
$\ph$ contracts the distances on the same interval, and therefore $\LF(\ph,[\ka,1])=0$. Then, as $X(x) > 0$
for $x\in\,]0,1[$,
$$
]0,1]=\bigcup_{n\in\N}\ph^{-n}([\ka,1])
$$
and the transport and $\sig$--union properties prove that $\LF(\ph,]0,1])=0$. As $\LF(\ph,\{0\})=0$,
one concludes  that $\LF(\ph)=0$.

\vskip2mm

We will now prove that $ \L(\ph)\leq 1$.
Fix $\eps>0$ and let us construct an explicit covering of $I$ with balls of radius $\eps$ for $d_n^\ph$, for any prescribed $n$.
We assume $\eps <1/2$ and we introduce  the  intervals $I_0=[0,\eps]$, $I_1=[1-\eps,1]$ and $J=[\eps, 1-\eps]$. We will
separately construct coverings for each of these intervals.

\vskip 2mm

-- Since $\ph(x)\geq x$ for all $x\in X$, $I_1$ is exactly the ball $B_n(1,\eps)$, for all $n\in\N^*$.

\vskip 2mm

-- To cover $J$, remark that there exists $n_0$ such that $\ph^{n_0}(\eps)\in I_1$. Therefore, for any two points
$x$ and $y$ in $J$, $d(\ph^n(x),\ph^n(y))\leq \eps$ when $n\geq n_0$, and so one hast just to consider the iterates of
order $n$ with $0\leq n\leq n_0$. Since the maps $\ph,\ph^2,\ldots,\ph^{n_0}$ are uniformly continuous on $J$
there exists $\al>0$ such that if $d(x,y)<\al$ then $d(\ph^n(x),\ph^n(y))\leq \eps$ for all $n\in\{0,\ldots,n_0\}$.
Therefore one also has $d(\ph^n(x),\ph^n(y))\leq \eps$ for all $n\geq0$.
We divide $J$ in subintervals $J_1, J_2, \ldots, J_p$ of equal length less than $\al$ and pick a point $x_i$ in $J_i$,
so $J_i\subset B_n(x_i,\eps)$ for all $n\in\N^*$.
This way we get a covering of $J$ with $p$ such balls. Note that $p$ depends on $\eps$ but not on $n$.

\vskip 2mm

--  It only remains to cover the first interval $I_0$. For this part only we fix $n\geq 1$.
First remark that the interval $[\eps,\ph(\eps)]$ is covered by some intervals, say
$J_1,\ldots,J_q$ of the previous family. Thus the interval $[\ph\inv(\eps),\eps]=\ph\inv([\eps,\ph(\eps)])$ is
covered by $\ph\inv(J_j)$, $1\leq j\leq q$.
It is clear by construction and by the previous point that $\ph\inv(J_j)$ is contained in the ball $B_n(\ph\inv(x_j),\eps)$,
therefore $[\ph\inv(\eps),\eps]$ can be covered  by $q$ such $(n,\eps)$-balls.

By the same argument, taking the pullbacks of order $k$, $1\leq k\leq n$, we obtain a covering of each interval
$[\ph^{-(k+1)}(\eps),\ph^{-k}(\eps)]$ by a number $q$ of $(n,\eps)$-balls, and as
a result a covering of $[\ph^{-n}(\eps),\eps]$ by $nq$ such $(n,\eps)$-balls.

It therefore only remains to consider the interval $[0,\ph^{-n}(\eps)]$, which exactly coincides with the ball $B_n(0,\eps)$,
so is convered with one $(n,\eps)$-ball.

\begin{figure}[h]
\begin{center}
\begin{pspicture}(4cm,1.2cm)
\rput(2,0){
\psset{xunit=1cm,yunit=1cm}
\psline[linewidth=.02](-5,0)(5,0)
\pscircle[linewidth=.01, fillstyle=solid,fillcolor=white](-5,0){.07}
\pscircle[linewidth=.01, fillstyle=solid,fillcolor=white](5,0){.07}
\pscircle[linewidth=.01, fillstyle=solid,fillcolor=white](-4,0){.07}
\pscircle[linewidth=.01, fillstyle=solid,fillcolor=white](4,0){.07}
\pscircle[linewidth=.01, fillstyle=solid,fillcolor=white](0,0){.07}
\rput(-5,.3){$0$}
\rput(5,.3){$1$}
\rput(-4,.3){$\eps$}
\rput(4,.3){$1-\eps$}
\rput(.2,1){$\ph(\eps)$}
\psline[linewidth=.01](0,.3)(0,.7)
\psline[linewidth=.02](-3.2,-.1)(-3.2,.1)
\psline[linewidth=.02](-2.4,-.1)(-2.4,.1)
\psline[linewidth=.02](-1.6,-.1)(-1.6,.1)
\psline[linewidth=.02](-0.8,-.1)(-0.8,.1)
\psline[linewidth=.02](0,-.1)(0,.1)
\psline[linewidth=.02](3.2,-.1)(3.2,.1)
\psline[linewidth=.02](2.4,-.1)(2.4,.1)
\psline[linewidth=.02](1.6,-.1)(1.6,.1)
\psline[linewidth=.02](0.8,-.1)(0.8,.1)
\rput(-4.5,-.3){$I_0$}
\rput(4.5,-.3){$I_1$}
\rput(-3.6,.3){$J_1$}
\rput(-2.8,.3){$J_2$}
\rput(-1.5,.3){$\cdots$}
\rput(-.4,.3){$J_q$}
\rput(1.9,.3){$\cdots$}
}
\rput(2,-1.8){
\psset{xunit=1cm,yunit=1cm}
\psline[linewidth=.02](-5,0)(5,0)
\pscircle[linewidth=.01, fillstyle=solid,fillcolor=white](-5,0){.07}
\pscircle[linewidth=.01, fillstyle=solid,fillcolor=white](5,0){.07}
\pscircle[linewidth=.01, fillstyle=solid,fillcolor=white](-3,0){.07}
\pscircle[linewidth=.01, fillstyle=solid,fillcolor=white](3.1,0){.07}
\pscircle[linewidth=.01, fillstyle=solid,fillcolor=white](1.7,0){.07}
\rput(-5,.3){$0$}
\rput(5,.3){$\eps$}
\rput(-3,.4){$\ph^{-N}(\eps)$}
\rput(3,.4){$\ph^{-1}(\eps)$}
\rput(1.7,.4){$\ph^{-2}(\eps)$}
\rput(-.5,.4){$\cdots$}
\rput(5.5,-.8){$\ph^{-1}(J_q)$}
\rput(3,-.8){$\ph^{-1}(J_1)$}
\rput(4.3,-.8){$\cdots$}
\psline[linewidth=.01](5.5,-.5)(4.85,-.02)
\psline[linewidth=.01](3,-.5)(3.2,-.02)
\psline[linewidth=.02](4.7,-.1)(4.7,.1)
\psline[linewidth=.02](4.4,-.1)(4.4,.1)
\psline[linewidth=.02](4.15,-.1)(4.15,.1)
\psline[linewidth=.02](3.9,-.1)(3.9,.1)
\psline[linewidth=.02](3.7,-.1)(3.7,.1)
\psline[linewidth=.02](3.55,-.1)(3.55,.1)
\psline[linewidth=.02](3.4,-.1)(3.4,.1)
\psline[linewidth=.02](3.25,-.1)(3.25,.1)
}
\end{pspicture}
\end{center}
\vskip2.4cm
\caption{The covering of $I$}
\end{figure}
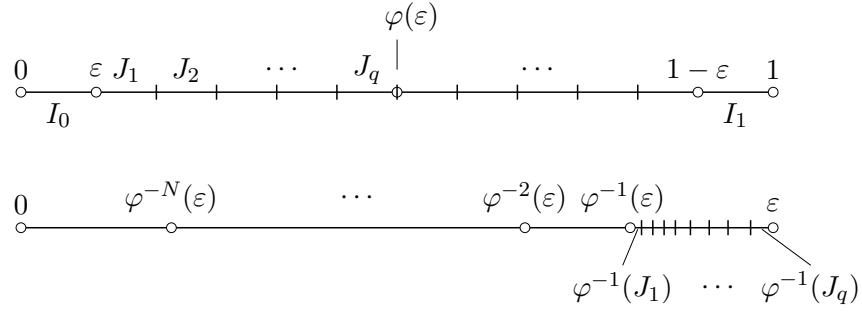

Gathering all the elements of the previous reasoning, we end up with a covering of $I$ with
$nq+p+2$ balls of radius $\eps$ for $d_n^\ph$, which proves that
$$
G_n(\eps)\leq (n+1)p+2,
$$
where $p$ is independent of $n$. Therefore $\L(\ph)\leq 1$.

\vskip2mm

We will now prove that $\un \L(\ph)\geq 1$, and for this we only need now to find suitable separated subsets.  Consider a point
$a\in\,]0,1[$ and let $\eps_0=\ph(a)-a$, so $\eps_0>0$. Then one easily checks that for $0<\eps <\eps_0$ and $n\geq 1$, the points
$$
\ph^{-n+1}(a),\ldots, \ph\inv(a),a
$$
are $(n,\eps)$--separated, so $G_n(\ph,\eps/2)\geq S_n(\ph,\eps)\geq n$ and therefore
$$
\liminf_{n\to\infty} G_n(\ph,\eps/2)/n\geq 1,
$$
which proves that
$\un\L(\ph)\geq 1$ and so that $\un\L(\ph)=\L(\ph)=1$.
\end{proof}

\begin{cor}
The indices $\un\L$ and $\L$ do not enjoy the $\sig$--union property.
\end{cor}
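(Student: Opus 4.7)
The strategy is to exploit the map $\ph$ from Proposition \ref{prop:indseg} and show that if $\L$ (or $\un\L$) satisfied a $\sig$-union property, then the very decomposition used to establish $\LF(\ph)=0$ would force $\L(\ph)=0$, contradicting $\L(\ph)=1$.

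First I would verify that $\L(\ph,[\ka,1])=\un\L(\ph,[\ka,1])=0$. Since $X$ is decreasing on $[\ka,1]$ and $X(1)=0$, the interval $[\ka,1]$ is forward invariant and the variational equation gives $\partial_x\ph_t(x)=\exp\bigl(\int_0^t X'(\ph_s(x))\,ds\bigr)\leq 1$ for all $t\geq 0$ and $x\in[\ka,1]$. Thus $\ph$ is non-expanding on $[\ka,1]$, hence $d_n^\ph$ coincides with $d$ on $[\ka,1]\times[\ka,1]$ for every $n$, and the minimal number of $(n,\eps)$-balls needed to cover $[\ka,1]$ is bounded independently of $n$. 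The polynomial growth rate is therefore $0$.

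Next, since $\ph$ is a homeomorphism, the transport property in Proposition \ref{prop:naturality} yields $\L\bigl(\ph,\ph^{-n}([\ka,1])\bigr)=\L(\ph,[\ka,1])=0$ for every $n\in\N$, and similarly for $\un\L$. Since $X(x)>0$ on $\,]0,1[$, every $x\in\,]0,1]$ reaches $[\ka,1]$ in finite time, so
$$
[0,1]=\{0\}\ \cup\ \bigcup_{n\in\N}\ph^{-n}([\ka,1]),
$$
a countable union of sets on each of which $\L$ and $\un\L$ vanish (the singleton $\{0\}$ trivially has zero index).

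The conclusion is immediate: if $\L$ (respectively $\un\L$) enjoyed the $\sig$-union property, then $\L(\ph)$ (respectively $\un\L(\ph)$) would equal the supremum over $n$ of these vanishing indices, hence would be $0$. This contradicts Proposition \ref{prop:indseg}, which gives $\L(\ph)=\un\L(\ph)=1$. No step is really delicate here; the only point to handle carefully is the contraction estimate on $[\ka,1]$, which reduces to the sign of $X'$ via the standard formula for the derivative of the flow.
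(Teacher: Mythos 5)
Your proof is correct and is essentially the paper's own argument: the paper disposes of the corollary by noting that any index with both the transport and $\sig$-union properties would vanish for the segment system of Proposition \ref{prop:indseg}, exactly the decomposition $[0,1]=\{0\}\cup\bigcup_{n}\ph^{-n}([\ka,1])$ you use, contradicting $\un\L(\ph)=\L(\ph)=1$. Your explicit verification of the contraction on $[\ka,1]$ via the variational equation just fills in a step the paper leaves implicit.
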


The proof is obvious, since any index which enjoys the $\sig$--union property and the transport property would vanish for
the previous system on the segment, by the same argument as for $\LF$.

Note finally that the same reasoning would apply and yield the same indices for any gradient vector field on a sphere
$S^n$, $n\geq 2$, with only two  singularities.


\subsection{The complexity indices of action-angle systems}
A remarkable fact, in view of the previous proposition, is that for integrable systems in action-angle
form all the complexity indices do coincide, as we will now prove.

\begin{prop}\label{prop:actang}  Consider a $C^2$ Hamiltonian function $h$ which depends only
on the action variable $r$ and is defined on a given closed ball $\BB:=\ov B(r_0,R)$ of $\R^n$.
Its  time-one map therefore reads:
$$
\ph(\th,r)=\big(\th+\om(r) \ [{\rm mod}\, \Z^n],\,r\big),
$$
where $\om(r)=\d h(r)$  is $C^1$ on  $\BB$.
Then the complexity indices of $\ph$ on $\T^n\times \BB$
satisfy:
$$
\LF(\ph)=\un\L(\ph)=\L(\ph)=\Max_{r\in \BB} \rk \om(r).
$$
\end{prop}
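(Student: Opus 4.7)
Set $k=\Max_{r\in\BB}\rk D\om(r)$. Since the inequalities $\LF\le\un\L\le\L$ have already been established, it suffices to prove the two bounds $\L(\ph)\le k$ and $\LF(\ph)\ge k$. The proof rests on the explicit form of the dynamical distance
$$
d_n^\ph\big((\th,r),(\th',r')\big)=\Max\Big(|r-r'|,\ \Max_{0\le j\le n-1}|\th-\th'+j(\om(r)-\om(r'))|_{\T^n}\Big),
$$
obtained from $\ph^j(\th,r)=(\th+j\om(r),r)$. This formula turns everything into a problem of covering/separating actions whose $\om$-images cannot spread too much under iteration.

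\textbf{Upper bound $\L(\ph)\le k$.} The plan is to build, for each $\eps>0$ and each $n\in\N^*$, a cover of $\T^n\times\BB$ by $(n,2\eps)$-balls with cardinality $C(\eps)\,n^k$. I would first cover $\T^n$ by $O(\eps^{-n})$ ordinary $\eps$-balls; then reduce to partitioning $\BB$ into pieces of diameter $\le\eps$ on which $\om$ oscillates by at most $\eps/n$, since any product of such pieces is then contained in a $(n,2\eps)$-ball centered at any of its points. To construct the $\om$-oscillation partition, use the uniform continuity of $D\om$ on $\BB$: for each $r_0$, there is a scale $\rho=\rho(\eps)>0$ such that $\om(B(r_0,\rho))\subset\om(r_0)+D\om(r_0)(B(0,\rho))+B(0,\eps/(2n))$. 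Since $\rk D\om(r_0)\le k$ by hypothesis, the set $D\om(r_0)(B(0,\rho))$ lies in a $k$-dimensional ball of radius $O(\rho)$, hence can be covered by $O((n\rho/\eps)^k)$ balls of radius $\eps/(2n)$. Pulling back and intersecting with $B(r_0,\rho)$ decomposes it into $O(n^k)$ pieces of diameter $\le\rho\le\eps$ with $\om$-image diameter $\le\eps/n$. A finite $\rho$-subcover of $\BB$ gives the global partition, yielding $G_n(2\eps)\le C(\eps)\,n^k$ and therefore $\L(\ph)\le k$.

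\textbf{Lower bound $\LF(\ph)\ge k$.} By the monotonicity property of $\LF$, it suffices to exhibit a subset $Y\subset\T^n\times\BB$ with $\LF(\ph,Y)\ge k$. Pick $r_*\in\Int\BB$ with $\rk D\om(r_*)=k$; since rank is lower semi-continuous and bounded above by $k$, it equals $k$ on some open $W\ni r_*$, and shrinking $W$ we may further assume that $\om|_W$ factors as a rank-$k$ submersion onto a $k$-dimensional submanifold of $\R^n$ and that $\diam\om(W)<1/2$. Set $Y=\T^n\times W$. The key volume estimate is
$$
\mathrm{vol}\big(B_n((\th_0,r_0),\eps)\cap Y\big)\le C\,\eps^{2n}/n^k.
$$
Indeed the $\th$-slice at a fixed $r$ has measure $\le(2\eps)^n$, and is empty unless $|l(\om(r)-\om(r_0))|_{\T^n}\le 2\eps$ for $l=1,\ldots,n-1$; because $|\om(r)-\om(r_0)|<1$ on $W$ only the principal lattice branch contributes, giving $|\om(r)-\om(r_0)|\le 2\eps/(n-1)$. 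Combined with the rank-$k$ submersion structure and the constraint $|r-r_0|\le\eps$, this confines $r$ to a set of $\R^n$-measure $\le C(\eps/n)^k\eps^{n-k}=C\eps^n/n^k$.

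Summing the estimate over any covering $C=(B_{n_i}(x_i,\eps))_{i\in I}\in\jC_{\ge N}(Y,\eps)$ yields
$$
\mathrm{vol}(Y)\le\sum_{i\in I}\mathrm{vol}\big(B_{n_i}(x_i,\eps)\cap Y\big)\le C\eps^{2n}\sum_{i\in I}\frac{1}{n_i^k},
\qquad\text{hence}\qquad
\sum_{i\in I}\frac{1}{n_i^k}\ge c_1(\eps)>0.
$$
For any $s<k$ the inequality $1/n_i^s=(1/n_i^k)\,n_i^{k-s}\ge N^{k-s}/n_i^k$ (using $n_i\ge N$) gives $M(C,s)\ge c_1(\eps)\,N^{k-s}$; therefore $\de(Y,\eps,s,N)\ge c_1(\eps)N^{k-s}\to+\infty$, so $\De(Y,\eps,s)=+\infty$ and $s_c(Y,\eps)\ge k$ for every $\eps>0$. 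Passing to the limit gives $\LF(\ph,Y)\ge k$, and monotonicity concludes.

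\textbf{Main obstacle.} The delicate points are, on the one hand, the uniform rank-theorem-type cover in the upper bound — turning the pointwise condition $\rk D\om\le k$ into a cover estimate of the right polynomial order $n^k$ at a scale that works simultaneously for all $r_0\in\BB$, including those where the rank drops below $k$ — and, on the other, the choice of $W$ small enough that only the principal lattice branch enters the volume computation, which is what allows the gain of the factor $1/n^k$ in the bound $\mathrm{vol}(B_n\cap Y)\le C\eps^{2n}/n^k$.
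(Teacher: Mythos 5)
Your lower bound is essentially the paper's own argument and is correct in substance: near a point where the rank of $\om$ is maximal you use the constant-rank structure to bound the measure of an $(n_i,\eps)$-ball intersected with $Y=\T^n\times W$ by $C\eps^{2n}/n_i^k$, and summing over any covering in $\jC_{\geq N}(Y,\eps)$ forces $M(C,s)\geq c_1(\eps)N^{k-s}$ for $s<k$; the paper does the same computation with the $n$-dimensional volume of the fibers over $\th=0$ instead of the full $2n$-dimensional volume. One small repair: since the centers $(\th_0,r_0)$ of the covering balls need not lie in $Y$, the torus conditions for $j\leq n_i-1$ only force $\om(r)-\om(r_0)\in\Z^n+B(0,2\eps/(n_i-1))$, and your hypothesis $\diam\om(W)<1/2$ does not exclude the nonzero lattice branches because $r_0$ may lie outside $W$. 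This is harmless (only finitely many lattice translates meet the bounded set $\om(\BB)-\om(\BB)$, so the volume bound survives with a larger constant), but it should be argued.

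The genuine gap is in the upper bound. You assert a scale $\rho=\rho(\eps)$, independent of the iteration count $N$ (your $n$ doubles as the dimension), with $\om(B(r_0,\rho))\subset\om(r_0)+D\om(r_0)(B(0,\rho))+B(0,\eps/(2N))$. For fixed $\rho>0$ the linearization error on $B(r_0,\rho)$ is a fixed positive quantity, of order $\rho\,\mu(\rho)$ where $\mu$ is the modulus of continuity of $D\om$, so it cannot be below $\eps/(2N)$ for every $N$ unless $\om$ is affine there. If instead you let $\rho=\rho_N\to0$ with $\rho_N\,\mu(\rho_N)\leq\eps/(2N)$, the number of $\rho_N$-balls covering $\BB$ grows like $\rho_N^{-n}$ and your total count becomes of order $\rho_N^{-n}(N\rho_N/\eps)^k=\eps^{-k}N^k\rho_N^{-(n-k)}$; for $h$ merely $C^2$ the factor $\rho_N^{-(n-k)}$ can be of order $N^{n-k}$ up to slowly varying corrections, so the scheme does not even give $O(N^{k'})$ for $k'$ close to $k$. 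The overcount comes from linearizing ball by ball: the images of the different $\rho$-balls overlap massively inside the at most $k$-dimensional set $\om(\BB)$, so the covering of the image must be done globally, once. That is exactly the paper's route: cover $\T^n$ and $\BB$ by $\eps/2$-balls and cover $\om(\BB)$ by balls of radius $\eps/(4N)$, invoking the fact that the image of a compact set under a $C^1$ map of rank $\leq k$ has upper ball (box) dimension $\leq k$, whence the third covering has at most $(2N/\eps)^{k'}$ elements for every $k'>k$ and $N$ large. This yields only $G_N(\eps)\leq c(\eps)N^{k'}$ for all $k'>k$, not your sharper $C(\eps)N^k$ (which is not needed, and is doubtful at this regularity), but it suffices because $\L$ is defined as an infimum over exponents $\sig$ with $G_N/N^\sig\to0$.
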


\begin{proof} Recall that given a compact metric space $(X,d)$, the ball dimension $D(X)$ is by definition
$$
D(X):=\limsup_{\eps\to 0}\Frac{\Log c(\eps)}{\abs{\Log\eps}}
$$
where $c(\eps)$ is the minimal cardinality of a covering of $X$ by $\eps$--balls. We will use the fact that the ball
dimension of a compact manifold is equal to its usual dimension, and that the ball dimension of the image of a compact
manifold by $C^1$ map of rank $\ell$ is  $\leq \ell$.

\vskip2mm

We endow $\R^n$ with the product metric defined by the $\Max$ norm $\norm{\ }$ and the ball $\BB$ with the induced
metric. We endow the torus $\T^n$ with the quotient metric. As the pairs of points  $\th$, $\th'$ of $\T^n$ we will have to consider are close enough
to one another, we still denote by $\norm{\th-\th'}$ their distance. Finally
we endow the annulus $\T^n\times \BB$ with the product metric of the previous ones.

\vskip2mm

Let $\ell:=\Max_{r\in B} \rk \om(r)$.
We will first prove that $\L(\ph)\leq \ell$. Let $\eps>0$ be fixed and consider $N\geq 1$.
Remark by elementary computation that if two points $(\th,r)$ and $(\th',r')$ of $\T^\ell\times \BB$ satisfy
\begin{equation}\label{eq:ball}
\norm{\th-\th'}< \Frac{\eps}{2},\qquad
\norm{\om(r)-\om(r')}\leq \Frac{\eps}{2N},\qquad
\norm{r-r'}<\eps
\end{equation}
then $d_N^\ph\big( (\th,r),(\th',r'))< \eps$. We are thus led to introduce the following coverings :

\vskip1mm $\bullet$   a minimal covering $C_{\T^n}$ of $\T^n$ with balls of radius $\eps/2$, so its cardinality
$i^*$ depends only on $\eps$;

\vskip1mm $\bullet$  a minimal covering  $(\ha B_j)_{1\leq j\leq j^*}$  of $\BB$ by balls of radius $\eps/2$, so again
$j^*$ depends only on $\eps$;

\vskip1mm $\bullet$ for $N\geq1$, a  minimal covering  $(\til B_k)_{1\leq k\leq k^*}$ of the image $\om(\BB)$ with balls of radius
$\eps/(4N)$.

\vskip1mm

Using the last two coverings, we get a covering $C_{\BB}=(\ha B_j\cap\om\inv(\til B_k))_{j,k}$ of $\BB$  such that any two points $r,r'$ in the same
set $\ha B_j\cap\om\inv(\til B_k)$ satisfy the last two conditions of (\ref{eq:ball}).
We finally obtain a covering of $\T^n\times\BB$ by considering the products of the elements of $C_{\T^n}$ and $C_{\BB}$, the elements
of which are contained in balls of $d_N^\ph$ radius $\eps$.

\vskip1mm

Note that given $\ell'>\ell$,  for $N$ large enough, $k^*\leq (2N/\eps)^{\ell'}$ since the ball dimension of $\om(\BB)$ is less than $\ell$.
This proves that
$$
G_N(\ph,\eps)
\leq i^*\,j^*\,k^*
\leq c(\eps) N^{\ell'}.
$$
As a consequence $\ov s_c(\ph,\eps)\leq \ell'$ and $\L(\ph)\leq\ell$ since $\ell'>\ell$ is arbitrary.

\vskip2mm

Let us now prove that $\LF(\ph)\geq \ell$.
We first need to describe the $(N,\eps)$--balls of the system more precisely. Let $(\th,r)$ be given, with $r$ in the open ball $B(0,R)$,
and fix $\eps>0$ such that $B(r,\eps)\subset B(0,R)$.
Then a point $(\th',r')$  belongs to the ball $B_N((\th,r),\eps)$ if and only if
$$
\norm{r'-r}<\eps,\qquad  \norm{k(\om(r')-\om(r))+(\th'-\th)}<\eps,\quad \forall k\in\{0,\ldots, N-1\}.
$$
Writing the various vectors in component form, the second condition is equivalent to
$$
\abs{\th'_i-\th_i}<\eps,\quad
\om_i(r')\in \Big]\Frac{(\th_i-\th'_i)-\eps}{N-1}+\om_i(r),\Frac{(\th_i-\th'_i)+\eps}{N-1}+\om_i(r)\Big[,\qquad 1\leq i\leq n.
$$
Therefore the ($2n$--dimensional) ball $B_N((\th,r),\eps)$ has a fibered structure over the
($n$--dimensional)  ball $B(\th,\eps)$, that is
$$
B_N((\th,r),\eps)=\bigcup_{\th'\in B(\th,\eps)}\{\th'\}\times F_{\th'},
$$
the fiber over the point
$\th'$ being the curved polytope
$$
F_{\th'}= \om\inv\Big(\prod_{1\leq i\leq n}\Big]\Frac{(\th_i-\th'_i)-\eps}{N-1}
 +\om_i(r),\Frac{(\th_i-\th'_i)+\eps}{N-1}+\om_i(r)\Big[\Big)\bigcap B(r,\eps).
$$

 Let now $r_0$ be in $B(0,R)$ and such that $\rk \om(r_0)=\ell$, and let
$\al>0$ be small enough so that  $B(r_0,2\al)\subset \BB$ and $\rk \om=\ell$ on $B(r_0,2\al)$.
Assume that a covering $C=(B_{n_i}((\th_i,r_i),\eps))_{i\in I}$ of $\jC_{\geq N}(\T^\ell\times B(0,\al))$ is given, with $\eps<\al$,
and denote by $F_0^{i}$ the fiber of $\th=0$ in the ball $B_{n_i}((\th_i,r_i),\eps)$ (which may be empty).
Then the set $\{0\}\times B(0,\al)$ is contained in the union of the fibers $F_0^i$. Let $\nu=(2\al)^n$ the $n$-dimensional
Lebesgue volume of this set.

Due to the assumption on the rank of $\om$, if $\al$ is small enough, there exists a constant $c>0$ such
that the Lebesgue volume of  the fiber $F_0^i$ satisfies
$$
{\rm Vol\,}(F_0^i)\leq c \Big(\Frac{2\eps}{n_i-1}\Big)^\ell.
$$
The sum of the volumes of the fibers  must be larger than $\nu$, so
$$
\sum_{i\in I}c \Big(\Frac{2\eps}{n_i-1}\Big)^\ell\geq \nu
$$
Assume that $s<\ell$. Then
$$
M(C,s)=\sum_{i\in I}\Frac{1}{n_i^s}= \Frac{1}{c(2\eps)^\ell}\sum_{i\in I}c\Big(\Frac{2\eps}{n_i-1}\Big)^\ell
\Frac{(n_i-1)^\ell}{n_i^s}\geq \Frac{\nu}{c(2\eps)^\ell}\Frac{1}{2^\ell} N^{\ell-s}
$$
so
$$
\De(\T^\ell\times B(0,\al),\eps,s)=\lim_{N\to\infty}\de(\T^\ell\times B(0,\al),\eps,s,N)=+\infty.
$$
This shows that $s_c(\T^\ell\times \BB,\eps)\geq\ell$, and finally that $\LF(\ph)\geq \ell$, which concludes the proof.
\end{proof}

Note that the previous proposition shows that the complexity indices cannot enjoy any analog of the Bowen formula.
Indeed, the restriction of the map $\ph$ to each invariant Lagrangian torus $\T^n\times\{r\}$ is an isometry, and so have zero complexity
relative to any measurement. So a ``Bowen formula'' would yield a vanishing index for $\ph$, which is not the case.

Note also that the complexity indices of action-angle systems detect the ``effective number of degrees of freedom'' of such systems,
and are in any case smaller than half the dimension of the ambient manifold, a remark which will be used in the next section.


\subsection{The complexity indices of continuous systems}

For the sake of completeness, we conclude this section with the definition of the complexity indices semi-flows on compact spaces $(X,d)$,
that is for continuous
maps $\phi$ of  $[0,+\infty[\,\times X$ to $X$ which satisfy the condition $\phi^0=\Id$ and $\phi^s\circ\phi^t=\phi^{s+t}$ where, as usual,
we denote by $\phi^t$ the map $\phi(t,.)$. For $t\geq 1$ one defines the continuous family of dynamical distances
$$
d_t^\phi(x,y)=\Sup_{0\leq \tau\leq t-1} d(\phi^\tau(x),\phi^\tau(y))
$$
which all define the same compact topology on $X$, note moreover that $d_t^\phi\geq d_{t'}^\ph$ for $t\geq t'\geq 1$.

\parag For each $t\geq 1$, we denote by $\jC^\phi_{\geq t}(\eps)$ the set of coverings of $X$ of the form $C=(B_{\tau_i}(x_i,\eps))_{i\in I}$ with
$\tau_i\geq t$ and, for such a covering $C$, we set $M(C,s)=\sum_{i\in I}\frac{1}{\tau_i^s}$ for $s\geq 0$. Finally we introduce the quantity
$$
\de^\phi(\eps,s,t)=\Inf\{M(C,s)\mid C\in \jC^\phi_{\geq t}(\eps)\}
$$
which is monotone non-decreasing with $t$, and wet set $\De^\phi(\eps,s)=\lim_{t\to\infty}\de^\phi(\eps,s,t)$. One sees that there exists
a unique $s_c^\Phi(\eps)$ such that $\De^\phi(\eps,s)=0$ if $s>s_c^\phi(\eps)$ and $\De^\phi(\eps,s)=+\infty$ if $s<s_c^\phi(\eps)$.
Finally, we define the weak complexity index for the continuous system $\phi$ as
$$
\L^*_c(\phi)=\lim_{\eps\to 0}s_c^\phi(\eps)=\Sup_{\eps>0}s_c^\phi(\eps).
$$
It turns out that if $\ph=\phi^1$, then
$$
\L^*_c(\phi)=\LF(\ph).
$$
To see this, first note that for $\eps>0$, there exists $\al_\eps>0$ such that if $d(x,y)<\al_\eps$ then $d(\phi^\tau(x),\phi^\tau(y))<\eps$ for all $x,y$ in $X$ and $\tau\in[0,1]$.
Therefore, for all $x\in X$ and $t\geq 1$, $B_{[t]}^\ph(x,\al_\eps)\subset B_t^\phi(x,\eps)$. From this one easily deduces that
$$
s_c^\phi(\eps)\leq s_c(\ph,\al_\eps)
$$
and therefore
$\L^*_c(\phi)\leq\LF(\ph)$. To prove the converse inequality one only has to remark that $d_[t]^\ph\leq d_t^\ph$ for all $t\geq 1$,
so clearly $s_c^\phi(\eps)\geq s_c(\ph,\eps)$.

\parag We now denote by $G_t^\phi(\eps)$ the minimal number of $d_t^\phi$--balls of radius $\eps$ in a covering of $X$, and set
$$
\L_c(\phi)=\limsup_{t\to\infty}\Frac{\Log G_t^\phi(\eps)}{\Log t}=\Inf\{\sig\geq 0\mid \lim_{t\to\infty}\Frac{1}{t^\sig}G_t(\eps)=0\}.
$$
One immediately sees that
$$
G_{[t]}(\ph,\eps)\leq G_t^\phi(\eps)\leq G_{[t]}(\ph,\al_\eps)
$$
from which one again deduces the equality
$$
\L_c(\phi)=\L(\ph).
$$

In the following, we therefore limit ourselves the the complexity indices of discrete systems.


\section{Strong integrability, decomposability and complexity}
We introduce here the notion of strong integrability, already used at an informal level by Paternain \cite{P}, which is a very mild global
assumption on the singularities of the first integrals.
We extensively study the geometric structure of strongly integrable systems and, as an application,
 give a short proof of Paternain's result on the
vanishing of their topological entropy.
We then introduce the new notion of decomposability, which is a refinement of the notion of strong integrability and relies on the previous
geometric study. Finally we prove that the weak complexity index of decomposable systems  is smaller than their number of degrees of
freedom.


\subsection{Strong integrability: structure and entropy}
The notion of strong integral we introduce below could be given in the general framework of Poisson manifolds, however we limit ourselves to the significantly simpler but fundamental case of symplectic manifolds, equipped with their canonical Poisson structure (see \cite{LMV}
for recent geometric results of action-angle type in the Poisson case). All objects are supposed to be smooth.

\subsubsection{Strong integrability and non-degenerate integrals}\label{sssec:int}
We consider a symplectic manifold
$(M,\Om)$ of dimension $2\ell$ and denote by $\{.,.\}$ its Poisson bracket. The Hamiltonian vector field associated with a function $f:M\to\R$
will be denoted by $X^f$.

\vskip2mm

1. We say that a map $F=(f_i):M\to\R^\ell$ is an {\em integral map} (or simply an integral)
when its components are in involution, that is
$$
\{f_i,f_j\}=0,\quad 1\leq i\leq \ell, \ 1\leq j\leq \ell.
$$
An integral map $F$ defines a local action  $\Phi_F$ on a neighborhood of $\{0\}\times M$ in $\R^\ell\times M$,
by
$$
(\tau,x)\mapsto \Phi_F(\tau,x)=\Phi^{\tau_\ell f_\ell}\circ\cdots\circ\Phi^{\tau_1 f_1}(x),$$
for  $x\in M$ and $\tau=(\tau_1,\ldots,\tau_\ell)$ in a small enough neighborhood of $0$ in $\R^\ell$.
When the fields $X^{f_i}$ are complete, $\Phi_F$ is an action of $\R^\ell$ on $M$, called the joint flow of $F$.  The orbits of this action are
isotropic immersed submanifolds of $M$.

\vskip2mm

2. Given $H\in C^\infty(M,\R)$, we say that the integral map $F$ is an {\em integral for $H$} when it is constant on the orbits of $X^H$,
or equivalently when $\{f_i,H\}=0$ for $1\leq i\leq\ell$.
In the following we define an {\em integrable system} as a quadruple
$(M,\Om,H,F)$, where $H$ is a Hamiltonian function on $M$ and $F$ an integral for $H$ which is of rank $\ell$ on an open and dense subset of $M$. There is obviously no uniqueness property for $H$ and $F$, but we will not address this question here.

Integrable systems have a twofold nature, according to the distinct roles of the Hamiltonian function and the integral: the integral
determines a decomposition of $M$ into its level sets, while the Hamiltonian function governs the dynamics on these level sets.

\vskip2mm

3. We already mentioned in the introduction that the topological entropy of the Hamiltonian flow of an integrable system
is localized on the singular set of its integral map.
The following definition gives global constraints on this set, which enables one to control the entropy.

\begin{Def}\label{def:strongint}
Let $(M,\Om)$ be a symplectic manifold of dimension $2\ell$, $\ell\geq1$, and consider an integral map
$F=(f_i)_{1\leq i\leq\ell}:M\to\R^\ell$.
For $d\in\{0,\ldots,\ell\}$, let
$$
\Sigma_d=\{x\in M\mid \rk F(x)=d\}.
$$
We say that $F$ is a {\em strong integral map} when:
\vskip2mm\noindent
$(i)$  for each $d\in\{0,\ldots,\ell\}$, $\Sigma_d$ is either empty or an embedded submanifold of $M$, of dimension $2d$,
on which $\Omega$ induces a symplectic form (that is $j^*\Om$ is non-degenerate, where $j:\Sigma_d\to M$ is the canonical inclusion);
\vskip2mm\noindent
$(ii)$ for each $d\in\{0,\ldots,\ell\}$, $\rk(F_{\vert \Sigma_d})=d$;
\vskip2mm
\noindent We then say that the collection $(\Sig_d)_{0\leq d\leq \ell}$ is the {\em partition associated with $F$}.
\vskip2mm
We say that a Hamiltonian function $H:M\to\R$ is {\em strongly integrable} if there exists a strong integral
map which is an integral for $H$, and we define a {\em strongly integrable system} as a quadruple $(M,\Om,H,F)$, where $H:M\to\R$ and
$F$ is a strong integral for $H$.
\end{Def}

\vskip2mm

For instance, when $M$ is two dimensional,  a Hamiltonian function $H$ with
isolated singularities is strongly integrable, with strong integral $F=H$.  But there exist strongly integrable Hamiltonian functions with non-isolated singularities, as shown
by the Hamiltonian function $H(x,y)=\pdemi\, y^2$ of the free particle on a line, which admits the strong integral
$F(x,y)=y$.

\vskip2mm

More generally, let $F:M\to\R^\ell$ be an integral map and consider $x\in \Sigma_d$
with $0\leq d\leq \ell-1$. One can assume that the first $d$ components $f_i$ of $F$ are independent at $x$, set $\ha F=(f_i)_{1\leq i\leq d}$,
and assume that $\ha F(x)=0$. The following easy results will allow us to set down a sufficient condition for $F$ to be a strong integral.

\vskip1mm

1. There exists a neighborhood $N$ of $x$ in $M$ and a transverse section $S$ to the (local) joint flow,  diffeomorphic to some ball $B^{2\ell-d}$
and containing $x$, such that $N=\Phi_F(B^d,S)$, where $B^d$ is some
ball centered at $0$ in $\R^d$.

\vskip1mm

2. The intersections $N_a=N\cap \ha F\inv(\{a\})$ are $(2\ell-d)$ embedded coisotropic submanifolds, for $a\in V= \ha F(S)$, such that
$N=\cup_{a\in V}N_a$. The characteristic distribution of $N_a$ at $y$ is the subspace of $T_yM$ spanned by the vector fields
$X^{f_i}(y)$, $1\leq i\leq d$. For $a\in V$, the intersection $S_a=S\cap N_a$ is a symplectic submanifold of dimension $2(\ell-d)$, isomorphic to the quotient of $N_a$ by the orbits of the joint flow.

\vskip1mm

3. The remaining components $F^r=(f_{d+1},\ldots,f_\ell)$ pass to the quotient by the joint flow and give rise to a function $\til F$
which can be identified with the restriction of $F^r$ to $S$. For $a\in V$, the restriction $\til F_a=\til F_{\vert S_a}$ is an integral
map on $S_a$.

\vskip1mm

4. One can choose $S$ in such a way that for $a\in V$, there exists  a symplectic diffeomorphism $\psi_a$  from $B^{2(\ell-d)}$ to $S_a$.
Let $(\xi_i,\eta_i)$ be a symplectic
coordinate system on $B^{2(\ell-d)}$.  For $y\in N$ and $a=\ha F(y)$, there exists a unique $y_0\in S_{a}$ and a unique $\tau\in B^{d}$
such that $y=\Phi_F(\tau,y_0)$. So one can associate to $y$ the set of local coordinates $(\tau,a,\xi,\eta)\in B^d\times V\times B^{2(\ell-d)}$,
where $(\xi,\eta)=\psi_a\inv(y_0)$.

\vskip1mm

5. In these coordinates, the local expression of $F$ reads
$$
\F(\tau,a,\xi,\eta)=\big(a,\til F_a(\psi_a(\xi,\eta))\big).
$$

We are now in a position to set our main definition and our criterion.

\begin{Def}
We say here that the point $x$ is {\em simple} when there exists a smooth function $\ze:V\to S$, with $\ze(0)=x$ and $\ze(a)\in S_a$ for $a\in V$, such that
$\rk \til F_a(\ze(a))=0$ and $\rk \til F_a(\ze)>0$ when $\ze\in S_a\setminus\{ \ze(a)\}$.
\end{Def}

\begin{lemma}
Let $F$ be a integral map on the manifold $(M,\Om)$. Then if each singular point $x$ of $F$ is simple, the integral $F$ is strong.
\end{lemma}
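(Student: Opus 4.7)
The plan is to work in the local coordinates $(\tau,a,\xi,\eta)\in B^d\times V\times B^{2(\ell-d)}$ provided by items 1--5, near an arbitrary point $x\in\Sig_d$ with $0\le d<\ell$. The case $d=\ell$ is trivial because $\Sig_\ell$ is open in $M$ and $F$ is a submersion on it, so both requirements of the definition of strong integrability hold automatically; we may therefore assume $\Sig_d\neq\emptyset$ and $d<\ell$. In the chart $N$ one has $F(\tau,a,\xi,\eta)=(a,\til F_a(\psi_a(\xi,\eta)))$ and, by simplicity, a smooth section $\ze:V\to S$ with $\ze(0)=x$ and $\ze(a)\in S_a$ such that $\til F_a$ has rank $0$ exactly at $\ze(a)$ and positive rank elsewhere in $S_a$.

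First I would localize $\Sig_d$ in the chart. The rank of $F$ at $(\tau,a,\xi,\eta)$ equals $d+\rk\til F_a$ evaluated at $\psi_a(\xi,\eta)$, so by simplicity it equals $d$ if and only if $\psi_a(\xi,\eta)=\ze(a)$. Consequently
$$
\Sig_d\cap N=\{(\tau,a,\xi,\eta)\mid \psi_a(\xi,\eta)=\ze(a)\},
$$
which is the graph over $B^d\times V$ of the smooth map $a\mapsto \psi_a\inv(\ze(a))$; this exhibits $\Sig_d\cap N$ as an embedded $2d$-dimensional submanifold of $M$ parametrized by $(\tau,a)$. Since every singular point of $F$ is simple by hypothesis, the same construction at each point of $\Sig_d$ assembles into an atlas that globally realizes $\Sig_d$ as a $2d$-dimensional embedded submanifold. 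In the coordinates $(\tau,a)$ the restricted map $F_{|\Sig_d}$ reads $(\tau,a)\mapsto(a,0)$ and has rank $d$, so condition $(ii)$ is established.

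Next I would verify that $j^*\Om$ is non-degenerate on $\Sig_d$. Because the $f_i$ Poisson-commute and $\tau_i$ is the time parameter of $X^{f_i}$, the coordinate field $\pa_{\tau_i}$ coincides with $X^{f_i}$ throughout the chart; it is automatically tangent to $\Sig_d$ (which is invariant under the joint flow), and together with $\pa_{a_j}$ it spans $T\Sig_d$ at each point. Using $f_i=a_i$ on $N$ and the defining identity $\Om(X^{f_i},\cdot)=-df_i$ one obtains
$$
(j^*\Om)(\pa_{\tau_i},\pa_{\tau_j})=\{f_j,f_i\}=0,\qquad (j^*\Om)(\pa_{\tau_i},\pa_{a_j})=-\de_{ij}.
$$
If $v=\sum c_i\pa_{\tau_i}+\sum c'_j\pa_{a_j}$ lies in $\Ker(j^*\Om)$ at any point of $\Sig_d$, pairing $v$ with $\pa_{\tau_k}$ forces $c'_k=0$, and then pairing with $\pa_{a_k}$ forces $c_k=0$. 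Thus $j^*\Om$ is symplectic on $\Sig_d$, and condition $(i)$ holds.

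The main (indeed the only) technical step is this last non-degeneracy computation; it encodes the general principle that $d$ independent, Poisson-commuting functions on a $2d$-dimensional flow-invariant submanifold are enough to keep the symplectic form non-degenerate after restriction. Everything else is a direct unpacking of the normal form in items 1--5 and of the simplicity hypothesis, so I do not expect any serious difficulty beyond that point.
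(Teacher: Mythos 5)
Your argument is correct and follows essentially the same route as the paper, which in the local coordinates notes $\rk F=d+\rk\til F_a$, identifies $\Sig_d\cap N$ as the graph $\big\{\big(\tau,a,\psi_a\inv(\ze(a))\big)\big\}$ and leaves conditions $(i)$ and $(ii)$ to the reader; you supply exactly those checks. The only slip is that $T\Sig_d$ is spanned by the $\pa_{\tau_i}$ together with $\pa_{a_j}$ \emph{plus} a correction in the $(\xi,\eta)$ directions coming from the $a$-dependence of the graph (and $F_{\vert\Sig_d}$ reads $(\tau,a)\mapsto\big(a,\til F_a(\ze(a))\big)$ rather than $(a,0)$, still of rank $d$), but since $df_i=da_i$ annihilates the fiber directions your pairing computation, and hence the non-degeneracy of $j^*\Om$, is unaffected.
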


\begin{proof} Let $x\in \Sig_d$, $0\leq d\leq \ell-1$ and assume that $x$ is simple.  In the previous coordinate system, one sees that
$$
\rk\F(\tau,a,\xi,\eta)=d+\rk\til F_a(\psi_a(\xi,\eta)).
$$
So, in these coordinates, the set $\Sig_d$ has the simple form
$$
\big\{\big(\tau,a,\psi_a\inv(\ze(a))\big)\mid \tau\in B^d,\ a\in V\big\}.
$$
and one easily checks conditions $(i)$ and $(ii)$ of Definition \ref{def:strongint}.
\end{proof}

The previous local form for $F$ is the starting point for an analysis of the {\em differential} local structure of integral maps at their singularities.
In spite of many partial results, see \cite{Z} for a survey, we still do not have a complete {\em symplectic} singularity theory for
such maps.  We therefore limit ourselves to a very useful non-degeneracy condition introduced by Eliasson (\cite{E84,E90}), which provides us with the main examples of strongly integrable systems and will also prove
useful in the following.

 Assume first that the rank $d$ of $F$ at $x$ is $0$.
Then the Hessian quadratic forms $Q_i=d^2f_i(x)$ are well-defined on $T_xM$ and
generate an abelian subalgebra $Q_x$ of the Lie algebra $Q(T_xM)$ of all quadratic forms equipped with the linearized Poisson bracket.
One says that $F$ is {\em Eliasson non-degenerate} at $x$ if $Q_x$ is a Cartan subalgebra of $Q(T_xM)$. When it is the case, by Williamson
theorem, there exists a triple of nonnegative integers $(n_e,n_h,n_{f})$ satisfying $n_e+n_h+2n_{f}=\ell$, and a (linear) system of symplectic
coordinates $(\xi_i,\eta_i)_{1\leq i\leq\ell}$
in $T_xM\sim \R^{2\ell}$  such that the algebra $Q_x$ is generated by the following quadratic forms:
\vskip1mm
$\bullet$  $(q_i^{(e)})$, for ${1\leq i\leq n_e}$, with \ \  $q_i^{(e)}(\xi,\eta)=\xi_i^2+\eta_i^2$;
\vskip1mm
$\bullet$  $(q_i^{(h)})$, for ${n_e+1\leq i\leq n_e+n_h}$, with \ \ $q_i^{(h)}(\xi,\eta)=\xi_i\eta_i$;
\vskip1mm
$\bullet$ $\big((q_i^{(f)},q_{i+1}^{(f)})\big)$, for $i=n_e+n_h+2j-1$ and $1\leq j\leq n_f$, with
\vskip-2mm
$$
q_i^{(f)}(\xi,\eta)=\xi_i\eta_{i+1}-\xi_{i+1}v_i,\qquad q_{i+1}^{(f)}(\xi,\eta)=\xi_i\eta_i+\xi_{i+1}\eta_{i+1}.
$$

 The superscripts $e,h,f$ stand for elliptic, hyperbolic and focus-focus respectively; note that the quadratic form of focus-focus type always come by pairs. Eliasson theorem states that there exists a local {\em symplectic} diffeomorphism from a neighborhood $N_x$ of $x$ in $M$ to a neighborhood $N$ of $0$ in $\R^{2\ell}$ which exchanges the Lagrangian fibrations defined by the level sets of $F$ in $N_x$ with those defined by the previous
quadratic forms in $N$.

Consider now a point $x\in M$ such that $d=\rk F(x)\in\{1,\ldots,\ell-1\}$. One says that $F$ is Eliasson non-degenerate at $x$ when the reduced map $\til F_0$ introduced above is Eliasson non-degenerate at $\til x$ as an integral map $S_0\to\R^{\ell-d}$ (note that $\rk \til F_0(x)=0$). Then a parametrized form of Eliasson theorem applies to our previous analysis and proves that  the local structure of the Lagrangian fibration defined by the integrals $\til F_a$ remains symplectically invariant when $a$ varies in a small enough neighborhood of $0$. As a consequence, the point $x$ is simple. We therefore have proved the following lemma.

\begin{lemma}
Eliasson non-degenerate integrals are strong integrals.
\end{lemma}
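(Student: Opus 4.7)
The plan is to verify, for every singular point $x$ of $F$, the simplicity condition of the preceding definition; by the lemma just above, this will suffice to conclude that $F$ is a strong integral.

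First I would fix $x\in\Sig_d$ with $0\le d\le \ell-1$ (there is nothing to check if $d=\ell$, since then $x$ is regular and does not belong to any singular stratum), and apply the local reduction items~1--5 from \S\ref{sssec:int} to obtain coordinates $(\tau,a,\xi,\eta)\in B^d\times V\times B^{2(\ell-d)}$ in which
$$
\F(\tau,a,\xi,\eta)=\bigl(a,\til F_a(\psi_a(\xi,\eta))\bigr),
$$
together with the reduced integral map $\til F_0:S_0\to\R^{\ell-d}$. By assumption, $\til F_0$ is Eliasson non-degenerate at $\til x:=\psi_0\inv(x)$, and by definition this means that $\rk \til F_0(\til x)=0$ while the Hessians of its components span a Cartan subalgebra of $Q(T_{\til x}S_0)$.

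Next I would invoke Eliasson's normal-form theorem for the rank-zero case on $S_0$ at $\til x$. In the resulting symplectic coordinates, the components of $\til F_0$ are precisely the Williamson quadratic generators $q_i^{(e)},q_i^{(h)},(q_i^{(f)},q_{i+1}^{(f)})$. A direct inspection of their differentials shows that these quadratic forms have a \emph{common} critical point only at the origin, so $\til x$ is an isolated rank-zero singular point of $\til F_0$, and moreover the rank of $\til F_0$ is strictly positive on a punctured neighborhood of $\til x$ in $S_0$.

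I would then pass to the parametric statement, which is the heart of the argument. Using the parametrized version of Eliasson's theorem (the statement quoted in the text just before the lemma: ``the local structure of the Lagrangian fibration defined by $\til F_a$ remains symplectically invariant when $a$ varies in a small enough neighborhood of $0$''), one obtains, after shrinking $V$ if necessary, a smooth family of symplectic diffeomorphisms $\chi_a:(S_a,\text{near }\psi_a\inv(\ze(a)))\to(\R^{2(\ell-d)},0)$ conjugating $\til F_a$ to an affine combination of the Williamson generators whose coefficients depend smoothly on $a$. In particular the rank-zero singular set of $\til F_a$ reduces in these coordinates to $\{0\}$, which pulls back to a single smooth section $\ze:V\to S$, $\ze(0)=x$, satisfying $\rk \til F_a(\ze(a))=0$ and $\rk\til F_a>0$ on $S_a\setminus\{\ze(a)\}$. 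This is exactly the simplicity condition, so the preceding lemma yields the conclusion.

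The main obstacle is precisely the parametrized Eliasson statement: the rank-zero, single-point form of Eliasson's theorem does not automatically give a smooth family of normal forms, and quoting it correctly (and its smooth dependence on the parameter $a$) is what carries the proof. Everything else is bookkeeping inside the local coordinates already set up in items~1--5.
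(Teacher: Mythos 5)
Your proposal is correct and follows essentially the same route as the paper: reduce to the quotient maps $\til F_a$, invoke the parametrized form of Eliasson's theorem to see that the rank-zero locus is a single point $\ze(a)$ depending smoothly on $a$ (i.e.\ the point $x$ is simple), and then apply the preceding lemma that simplicity of all singular points makes $F$ a strong integral. The extra details you supply (the Williamson generators having the origin as their only common critical point) are left implicit in the paper but are consistent with its argument.
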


This is particularly interesting since most of the classical integrable systems admit Eliasson non-degenerate integrals.
See also \cite{D} for a direct  analysis of the structure of non-degenerate integral maps. See also \cite{I} for another interesting non-degeneracy condition. In \cite{MM}, non-degenerate systems on 4 dimensional
symplectic manifolds will be extensively studied from the point of view of complexity.


\subsubsection{The structure of strongly integrable systems}
The conditions we impose on strong integral maps are essentially the mildest ones  enabling one to apply the
Liouville-Mineur-Arnold theorem  (see for instance \cite{Z}) everywhere in the ambient manifold.
The next easy lemma give a first description of the structure of strongly integrable systems. We say that an integral
map is {\em complete} when the Hamiltonian vector fields of its components are complete.

\begin{lemma}\label{lem:struct1}
Assume that $F$ is a complete strong integral on $(M,\Om)$ and
consider a Hamiltonian function $H$ on $M$ which admits $F$ as an integral.
Then:
\vskip1mm\noindent
(i) for each orbit $O$ of the joint flow $\Phi_F$ of $F$, the rank of $F$ is constant on $O$,
so $O$ is contained in  some submanifold $\Sig_d$;
\vskip1mm\noindent
(ii) an orbit of $\Phi_F$ is contained in $\Sig_d$ if and only if it is $d$-dimensional;
\vskip1mm\noindent
(iii) if $O$ is a $d$-dimensional orbit of $\Phi_F$, there exists $k\in\{0,\ldots,d\}$ and an immersion
$j_O:\T^k\times\R^{d-k}\to M$
with image $O $ such that the pull-back $j_O^*(X^H)$ is constant;
\vskip1mm\noindent
(iv) if moreover $O$ is compact, there exists a symplectic diffeomorphism $\J$ from  a neighborhood of $O$ in $\Sig_d$ to
a neighborhood of the zero set
$Z=\T^d\times\{0\}$
in $T^*\T^d$ such that  $\J(O)=Z$ and the pull-back $F\circ\J\inv$ and $H\circ \J\inv$ depend
only on the action  variable;
\vskip1mm\noindent
(v) for each $c\in\R^\ell$ and
for  $d\in\{0,\ldots,\ell\}$, the (nonempty) connected components of the intersection $\Sig_d\cap F\inv(c)$
are $d$-dimensional isotropic tori or cylinders which are orbits of the joint flow and open in $\Sig_d\cap F\inv(c)$
(in particular, the orbits are embedded submanifolds of $M$).
\end{lemma}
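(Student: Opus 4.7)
The plan is to exploit that the components of $F$ are in involution, so their Hamiltonian flows commute and preserve $F$; the local $\R^\ell$-action $\Phi_F$ and its orbits are then the natural objects to study. For (i), differentiating the identity $F\circ\Phi^{\tau_i f_i}=F$ (a direct consequence of $\{f_i,f_j\}=0$) shows that $dF$ at two points of a common orbit are conjugate by the derivative of a symplectomorphism, so $\rk F$ is constant along orbits. For (ii), the tangent space to the orbit of $\Phi_F$ at $x$ is the subspace $\Vect\bigl(X^{f_i}(x)\bigr)_{1\leq i\leq\ell}$ of $T_xM$; since $\Om$ is non-degenerate, its dimension equals that of $\Vect\bigl(df_i(x)\bigr)$, which is $\rk F(x)$.

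For (iii), I would use that the stabilizer in $\R^\ell$ of a point of a $d$-dimensional orbit $O$ is a closed subgroup of codimension $d$, hence of the form $\R^{\ell-d-k}\times\Z^k$ after a linear change of coordinates; the quotient map furnishes the immersion $j_O\colon\T^k\times\R^{d-k}\to M$. To handle $X^H$ I would invoke $\{H,f_i\}=0\Rightarrow[X^H,X^{f_i}]=0$, so the flow of $X^H$ commutes with $\Phi_F$; combined with the preservation of $F^{-1}(c)$ and $\Sig_d$ and with the fact that $O$ is open in the $d$-dimensional intersection $\Sig_d\cap F^{-1}(c)$ (an immediate dimensional consequence of (ii)), this forces $X^H$ to be tangent to $O$. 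Being simultaneously tangent and $\R^\ell$-invariant on $O$, its pull-back by $j_O$ must be a translation-invariant, hence constant, vector field. For (iv), when $O$ is compact one has $k=d$ and $O\simeq\T^d$; I would apply the classical Liouville--Mineur--Arnold theorem on the $2d$-dimensional symplectic manifold $(\Sig_d,j^*\Om)$ to the restriction $F_{|\Sig_d}$: its $\ell$ components are pairwise in involution with constant rank $d$, so around $O$ one can select $d$ independent ones among them and obtain a symplectic diffeomorphism $\J$ sending a neighborhood of $O$ in $\Sig_d$ onto a neighborhood of the zero section $Z\subset T^*\T^d$, in which the chosen $d$ components depend only on the action variables. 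The remaining $\ell-d$ components are functionally dependent on these by rank and involution, and $H$ is $\Phi_F$-invariant, so both $F\circ\J^{-1}$ and $H\circ\J^{-1}$ factor through the action. Finally (v) is a consequence of (ii) and (iii): each orbit is $d$-dimensional, hence open in the $d$-dimensional $\Sig_d\cap F^{-1}(c)$, so a connected component of this intersection is a disjoint union of such open orbits and reduces to a single orbit by connectedness; isotropy is immediate from $\Om(X^{f_i},X^{f_j})=\{f_i,f_j\}=0$.

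The main obstacle I expect is (iv): the classical LMA theorem must be applied in the mildly overdetermined setting where $F_{|\Sig_d}$ has $\ell>d$ components of constant rank $d$, and most crucially one has to verify that $H$ becomes a function of the actions alone, which rests on the identification between the $\R^\ell$-action $\Phi_F$ on $O$ and the angle translation under $\J$. A secondary subtle point is the tangency of $X^H$ to $O$ in (iii); I would take care to establish the required openness of orbits directly from the rank count of (ii), independently of (v), so as to avoid any circularity between the two items.
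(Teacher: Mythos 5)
Your proof follows essentially the same route as the paper's: rank-constancy from $F\circ\Phi_F(\tau,\cdot)=F$ for (i), the tangent-space/rank count for (ii), reduction of the orbit to the locally free action generated by $d$ independent components and the classification of closed subgroups of $\R^\ell$ for (iii) (the paper simply cites the Liouville theorem here, and your explicit tangency argument for $X^H$ via the openness of $O$ in $\Sig_d\cap F\inv(c)$ correctly fills in what it leaves implicit), the Liouville--Mineur--Arnold theorem applied to $\ha F_{\vert \Sig_d}$ on the symplectic submanifold $\Sig_d$ for (iv), and openness plus connectedness for (v). Only a small slip to correct: the stabilizer of a point of a $d$-dimensional orbit is of the form $\R^{\ell-d}\times\Z^k$ (not $\R^{\ell-d-k}\times\Z^k$), which is what gives $O\simeq\T^k\times\R^{d-k}$ as you state.
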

\begin{proof}
$(i)$ The rank of $F$ is constant on the orbits of the joint flow, since
\begin{equation}\label{eq:comp}
F=F\circ \Phi_F(\tau,.),\qquad \forall \tau\in\R^\ell.
\end{equation}
Therefore  each orbit $O$ lies inside the symplectic submanifold $\Sig_d$,  where $d=\rk F_{\vert O}$.
\vskip1mm\noindent
$(ii)$ Obvious.
\vskip1mm\noindent
$(iii)$ Here we implicitly identify $T(\T^k\times\R^{d-k})$ with its canonical trivialisation to be able to speak of constant
vector fields.
Let $x\in O$ and assume, changing the ordering if necessary, that the map $\ha F=(f_1,\ldots, f_d)$ formed by the first $d$
components of $F$ has rank $d$ at $x$.
Then (\ref{eq:comp})  proves that $\rk \ha F(y)=d$ for each $y\in O$, and one easily sees that $O$ is the orbit of
$x$ under the action of the joint flow $\Phi_{\ha F}$. So our assertion directly comes from the Liouville theorem applied to
this latter action.
\vskip1mm\noindent
$(iv)$ Using item $(ii)$ of Definition~\ref{def:strongint}, (\ref{eq:comp}) again shows that one can assume that the rank of the restriction
$\ha F_{\vert O}$ is equal to $d$. Therefore the Liouville-Mineur-Arnold
theorem applies to $\ha F_{\vert \Sig_d}$
in the neighborhood of $O$ and yields the desired conclusion, since $\Sig_d$ is a $2d$--symplectic manifold by
item $(i)$ of Definition~\ref{def:strongint}.
\vskip1mm\noindent
$(v)$ Obvious using the previous items.
\end{proof}

The next definition  underlines the geometric features of a strongly integrable system.

\begin{Def}\label{def:objects}
 Let $H$ be a strongly integrable Hamiltonian on $(M^{2\ell},\Om)$ with complete strong integral $F$ and associated partition
 $(\Sig_d)_{0\leq d\leq \ell}$.
\vskip1mm
-- For $d\in\{1,\ldots,\ell\}$, a {\em $d$-action-angle chart} is a pair $(\U,\J)$, where $\U$ is a subset of $\Sig_d$, open in
$\Sig_d$, and $\J$ is a symplectic diffeomorphism from $\U$ to a subset
of $T^*\T^d$ of the form $\T^d\times B^d$, where $B^d$ is some open ball centered at the origin in $\R^d$,
such that $F\circ \J\inv$ and $H\circ\J\inv$ depend only on the action variables.
\vskip1mm
--  The {\em action-angle domain} $\jD$ of $F$ is the union of all the domains $\U$ of action-angle charts. An invariant torus
contained in $\jD$ will be called a {\em proper torus}, so the action-angle domain is the union of all proper tori.
\vskip1mm
-- A non-compact orbit of the joint flow will be called a {\em cylinder}. We define the {\em cylinder domain} $\jC$ as the union of all cylinders.
\vskip1mm
-- A {\em neutral torus} is an invariant torus of $X^H$ which is contained in a cylinder.
We define the {\em neutral domain} $\jN$ as the union of all neutral tori.
\vskip1mm
-- An {\em asymptotic cylinder} is a cylinder which does not contain any neutral torus.
We define the {\em asymptotic domain} $\jA$ as the union of all asymptotic cylinders.
\end{Def}

Remark that the notions of action-angle domain, proper tori and cylinders are purely geometric and depend only on the integral
$F$, while the notions of neutral tori and asymptotic cylinders  are dynamical ones and depend also on the Hamiltonian $H$.
Not also that a neutral torus is not an orbit of the joint flow.

We already noticed that the notion of constant vector field makes sense on the standard torus or cylinder $\T^k\times\R^{d-k}$.
In the following, when a vector field on an embedded torus or cylinder is conjugate to a constant vector field on the standard model,
we say that it is {\em linearizable}.

\begin{lemma}\label{lem:struct3}  Let $H$ be a strongly integrable Hamiltonian on $(M^{2\ell},\Om)$ with complete strong integral $F$.
Then a cylinder of the system is either asymptotic or completely foliated by neutral tori.
The subsets $\jD$, $\jN$ and $\jA$ are pairwise disjoint, and the following equalities
$$
M=\jD\cup\jC, \qquad \jC=\jN\cup\jA,
$$
hold true. The vector fields on the proper tori, cylinders and neural tori are linearizable.
\end{lemma}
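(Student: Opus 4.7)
The plan is to classify each point of $M$ by its orbit under the joint flow $\Phi_F$ and to read off the dynamics of $X^H$ on that orbit from the normal forms provided by Lemma~\ref{lem:struct1}. First I would establish the decomposition $M=\jD\cup\jC$ together with $\jD\cap\jC=\emptyset$. Any $x\in M$ lies on a unique $\Phi_F$-orbit $O$, which by items $(i)$ and $(ii)$ of Lemma~\ref{lem:struct1} is contained in some stratum $\Sig_d$ and is $d$-dimensional. If $O$ is compact, item $(iv)$ yields a symplectic action-angle diffeomorphism on a neighborhood of $O$ in $\Sig_d$, i.e.\ a $d$-action-angle chart; thus $x\in\jD$ and $O$ itself is a proper torus. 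If $O$ is not compact, it is by definition a cylinder and $x\in\jC$. Compactness is a property of the orbit, so the two cases are mutually exclusive.

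Next I would analyze a single cylinder $O$ using item $(iii)$ of Lemma~\ref{lem:struct1}: there is an immersion $j_O:\T^k\times\R^{d-k}\to M$ with image $O$, necessarily with $k<d$ (otherwise $O$ would be the compact torus $\T^d$), such that $j_O^*(X^H)$ is a constant vector field $v=(v_1,v_2)\in\R^k\times\R^{d-k}$. Two cases arise. If $v_2\neq 0$, every integral curve of $v$ is unbounded in the $\R^{d-k}$ factor, so no compact subset of $O$ is $X^H$-invariant and $O$ is asymptotic. If $v_2=0$, then $v$ is tangent to each slice $\T^k\times\{y\}$, $y\in\R^{d-k}$, and the images $j_O(\T^k\times\{y\})$ foliate $O$ by $X^H$-invariant $k$-tori; each one is embedded in $M$ because $O$ itself is embedded by item $(v)$ of Lemma~\ref{lem:struct1}, and is therefore a neutral torus. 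This gives the dichotomy for cylinders, the equality $\jC=\jN\cup\jA$, and the disjointness $\jN\cap\jA=\emptyset$ (immediate from the definition of asymptotic).

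Since $\jN\cup\jA\subset\jC$, the remaining disjointness relations $\jD\cap\jN=\jD\cap\jA=\emptyset$ follow from $\jD\cap\jC=\emptyset$. Linearizability is then read off the same normal forms: on a proper torus, the chart of item $(iv)$ makes $X^H$ a constant vector field on the standard torus $\T^d$; on a cylinder, item $(iii)$ is precisely the linearization; and on a neutral torus sitting inside a cylinder with $v_2=0$, the restriction of the constant field $v=(v_1,0)$ to the slice $\T^k\times\{y\}$ is itself constant.

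The only slightly delicate step is the case $v_2=0$, where one must check that the slices $j_O(\T^k\times\{y\})$ are genuine embedded invariant tori of $X^H$ and not just immersed subsets; this is what forces the use of item $(v)$ of Lemma~\ref{lem:struct1} to ensure $O$ is embedded in $M$, so that $j_O$ is in fact an embedding of $\T^k\times\R^{d-k}$ onto $O$.
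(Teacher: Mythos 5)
Your argument is correct and follows essentially the same route as the paper: the dichotomy for a cylinder is read off the constant vector field $(v_1,v_2)$ coming from Lemma \ref{lem:struct1}, the equality $M=\jD\cup\jC$ and the disjointness statements come from classifying joint-flow orbits as compact tori or cylinders, and linearizability is read off the same normal forms. The only point the paper makes more explicit is why $\jD\cap\jC=\emptyset$: an action-angle chart domain is entirely foliated by compact orbits of the joint flow, so no point of $\jD$ can lie on a cylinder --- your phrase ``compactness is a property of the orbit'' implicitly relies on this observation.
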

\begin{proof} Consider a cylinder $C$ of dimension $d$.  Then there exist $k\geq 1$ and an embedding $\I$ from $\T^k\times\R^{d-k}$
to $C$ such that, up to the canonical identification of the tangent space
$T_x(\T^k\times\R^{d-k})$ with $\R^k\times\R^{d-k}$,
the vector field $\I^*X^H(x)$ reads
\begin{equation}\label{eq:Liouvilleneut}
(v_1,\ldots,v_k,v_{k+1},\ldots,v_d),\qquad v_j\in\R,
\end{equation}
with $v_i$ independent of $x$. Clearly $C$ contains a neutral torus if and only if $v_{k+1}=\cdots=v_d=0$.
In this case, $C$ admits a foliation $(T_a)_{a\in \R^{d-k}}$ by the parallel invariant tori of equation $(v_{k+1},\ldots,v_d)=a$,
which are therefore all neutral tori.
In the case where one of the $d-k$ last
components of $X$ does not vanish, $C$ contains no compact invariant subset and thus is asymptotic.
This proves our first assertion, together with $\jC=\jN\cup\jA$ and $\jN\cap\jA=\emptyset$.

Now remark that the definition of an action-angle domain ensures that it cannot intersect a cylinder, since it
is entirely foliated by compact orbits of the joint flow $\Phi_F$, so
$\jD\cap \jC=\emptyset$, and thus also  $\jD\cap \jA=\emptyset$ and $\jN\cap \jC=\emptyset$.

Then, each point $x\in M$ belongs to its orbit under the joint flow, which is a proper torus or a cylinder.
This proves that $M=\jD\cup\jC$. The linearizability property is immediate by lemma \ref{lem:struct1} for proper tori and cylinders and
by (\ref{eq:Liouvilleneut}) for neutral tori.
\end{proof}

In view of the last lemma, we say that a cylinder is {\em neutral} when it contains a neutral torus, so a cylinder is either asymptotic
or neutral. We now briefly examine the case when the integral $F$ is no longer assumed to be complete

\begin{lemma}\label{lem:struct4} Assume that $F$ is a strong integral on $(M^{2\ell},\Om)$ and that $M^*$ is a compact subset
of $M$, invariant under the joint flow $\Phi_F$. Then for each $c\in\R^\ell$ and
for  $d\in\{0,\ldots,\ell\}$, the (nonempty) connected components of the intersection $\Sig_d\cap F\inv(c)\cap M^*$
are $d$-dimensional isotropic tori or cylinders which are orbits of the joint flow and open in $\Sig_d\cap F\inv(c)$.
The set $M^*$ admits a partition by proper tori, asymptotic cylinders of neutral tori on which the Hamiltonian vector
field is linearizable.
\end{lemma}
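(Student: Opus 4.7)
The strategy is to reduce to the globally complete setting of Lemma~\ref{lem:struct1} by observing that, although $F$ need not be complete on $M$, the compactness and invariance of $M^*$ automatically make the joint flow complete on $M^*$. More precisely, for each $x\in M^*$ and each $i\in\{1,\ldots,\ell\}$, the maximal integral curve of $X^{f_i}$ through $x$ remains in the compact set $M^*$ by invariance and therefore exists for all $t\in\R$. Consequently $\Phi_F$ restricts to a genuine $\R^\ell$-action on $M^*$, and the arguments $(i)$--$(v)$ in the proof of Lemma~\ref{lem:struct1} apply verbatim to orbits contained in $M^*$.

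Fix $c\in\R^\ell$ and $d\in\{0,\ldots,\ell\}$, and pick $x\in\Sig_d\cap F\inv(c)\cap M^*$. Its orbit $O$ under $\Phi_F\vert_{M^*}$ is a homogeneous space of $\R^\ell$, hence immersed as $\T^k\times\R^{d-k}$ for some $0\leq k\leq d$. The identity~(\ref{eq:comp}) together with the invariance of $M^*$ gives $O\subset\Sig_d\cap F\inv(c)\cap M^*$, and isotropy is immediate from $\Om(X^{f_i},X^{f_j})=\{f_i,f_j\}=0$. For the openness assertion I would invoke item~$(ii)$ of Definition~\ref{def:strongint}: since $F\vert_{\Sig_d}$ has constant rank $d$, the constant rank theorem makes $\Sig_d\cap F\inv(c)$ a $d$-dimensional embedded submanifold of the $2d$-dimensional manifold $\Sig_d$; as $O$ is also $d$-dimensional, it must be open in $\Sig_d\cap F\inv(c)$, which simultaneously forces $O$ to be embedded.

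For the partition statement, every $x\in M^*$ lies in a unique $\Phi_F$-orbit $O_x\subset M^*$, which is either compact (a torus) or non-compact (a cylinder). If $O_x$ is a torus, Lemma~\ref{lem:struct1}$(iv)$ produces an action-angle chart around it in $\Sig_d$, so $O_x$ is a proper torus. If $O_x$ is a cylinder, I would replay the argument of Lemma~\ref{lem:struct3}: the parametrization $\I:\T^k\times\R^{d-k}\to O_x$ conjugates $X^H$ to a constant vector field as in~(\ref{eq:Liouvilleneut}); either the last $d-k$ components vanish, in which case $O_x$ is foliated by the neutral tori $\I(\T^k\times\{a\})$, $a\in\R^{d-k}$, or at least one of them is nonzero, in which case the $X^H$-orbit through any point of $O_x$ escapes every compact subset of $O_x$ and $O_x$ is asymptotic. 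Linearizability of $X^H$ on proper tori and cylinders comes from Lemma~\ref{lem:struct1}$(iii)$-$(iv)$, and on neutral tori from the explicit form~(\ref{eq:Liouvilleneut}).

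The only potential obstacle is cosmetic: Lemma~\ref{lem:struct1}$(iv)$ is stated under a global completeness hypothesis, but we only need it for a compact orbit $O\subset M^*$. This causes no difficulty, because the Liouville-Mineur-Arnold construction is performed in a small enough neighborhood of $O$ in $\Sig_d$ and follows the joint flow only over a bounded time interval; by compactness of $O$ and the completeness of $\Phi_F$ on the invariant set $M^*$, a uniform time-of-existence is available, so the classical construction goes through unchanged.
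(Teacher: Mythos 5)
Your proposal is correct and follows essentially the same route as the paper: the key observation that compactness and invariance of $M^*$ force the joint flow to be complete on $M^*$, after which the arguments of Lemmas \ref{lem:struct1} and \ref{lem:struct3} apply unchanged, is exactly the paper's proof. The extra details you supply (openness via the constant rank theorem, the remark on applying Lemma \ref{lem:struct1}$(iv)$ near a compact orbit) are consistent refinements rather than a different approach.
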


\begin{proof} Simply note that for each $x\in M^*$, the solution of $X^{f_i}$ of initial condition $x$ is defined over $\R$.
This easily implies that the joint flow restricted to $M^*$ is well-defined and complete.  The rest of the proof follows the same lines
as those of lemmas \ref{lem:struct1} and \ref{lem:struct3}.
\end{proof}


\subsubsection{Topological entropy and strongly integrable systems}

Using now our setting, we can give a short proof of Paternain's result.

\vskip2mm

\noindent{\bf Theorem (Paternain).} {\it Let $(M^{2\ell},\Om)$ be a symplectic manifold. If $H$ is a proper function $M\to\R$ which defines
a strongly integrable Hamiltonian system on $M$,   and if $M^*$ is a compact subset of $M$
invariant under the flow of $X^H$, then the topological entropy of the restriction of this flow to $M^*$
vanishes.}

\vskip2mm

\begin{proof} We refer to \cite{HK} for an excellent presentation of the notion and basic properties of topological entropy.
We use the so-called variational principle, which asserts that
the topological entropy of an homeomorphism $\ph$ of a compact metric space
is the upper bound of the metric entropies relative to the invariant measures:
$$
\htop(\ph)=\sup_{\mu\in\M(\ph)} {\rm h}_\mu(\ph)
$$
where $\M(\ph)$ is the (nonempty convex compact) set of all probability measures invariant by $\ph$.
One readily deduces from the ergodic decomposition theorem that the upper bound is indeed reached
within the set $\M_e(\ph)$ of {\em ergodic} invariant measures.

Let $\ph$ be the time-one flow of $X^H$, which is well-defined since $H$ is proper.
Let $a\in\R$ be fixed. We will first prove that the topological entropy of the restriction $\ph_a$  of $\ph$ to the compact
level $H\inv(\{a\})$ vanishes. By the variational principle,  this amounts to proving that
${\rm h}_\nu(\ph_a)=0$ for all $\nu\in\M_e(\ph_a)$.

Let $F$ be a strong first integral for $H$.
Since $H$ is invariant under the joint flow $\Phi_F$, by lemme \ref{lem:struct4} the level $H\inv(\{a\})$ admits a partition by
proper tori, asymptotic cylinders or neutral tori on which the vector field $X^H$ is linearizable.
Consider  $\nu\in\M_e(\ph_a)$. Since each element of the previous partition
is an invariant set for $\ph_a$, the support of $\nu$ is contained in one of them, which we denote by
$O$, so $\nu(O)=1$. Let $d$ be the dimension of $O$.

Assume first that $O$ is an asymptotic cylinder and consider a compact subset $K$ of $O$ such that $\nu(K)>0$ (such a compact
exists by regularity of Borel measures). Then equation (\ref{eq:Liouvilleneut}) shows the existence of an
increasing  sequence $(n_k)_{k\in \N}$  of integers such that
$\ph_a^{n_k}(K)\cap \ph_a^{n_{k'}}(K)=\emptyset$ when $k\neq k'$. This contradicts the assumption that $\nu(O)=1$.

Therefore $O$ is necessarily either a proper torus or a neutral torus $T$ on which the system is conjugate to the time-one map
of a constant vector field.
Since this latter system is an isometry, its topological entropy vanishes, and so does the topological entropy of
${\ph_a}_{\vert T}$. Using  the variational principal again, one sees that the metric entropy ${\rm h}_\nu({\ph_a}_{\vert T})$ vanishes,
and since $T$ contains the support of $\nu$ this proves that ${\rm h}_\nu(\ph_a)=0$. Finally, the variational principle proves that
$\htop(\ph_a)=0$.

To conclude, it suffices now to see that the invariant set $M^*$ is the union of the invariants sets $M^*\cap H\inv(\{a\})$ for $a\in \R$,
and to apply the same argument : if $\nu$ is an ergodic  measure invariant under $\ph_{\vert M^*}$, then its support is contained
in some $M^*\cap H\inv(\{a\})$ on which $\ph$ has zero topological entropy, so the metric entropy ${\rm h}_\nu(\ph_{\vert M^*})$
vanishes and so does the topological entropy of $\ph_{\vert M^*}$.
\end{proof}

Note that the proof of the previous theorem is even simpler if one uses the Bowen formula.


\subsection{Decomposability and weak complexity of Hamiltonian systems}\label{subsec:decomp}
Decomposable Hamiltonian systems are particular cases of strongly integrable systems, with additional assumptions on the dynamical
asymptotic behaviour of orbits.

\subsubsection{Decomposability}
In order to formulate the definition of decomposability, we first need to introduce the notion of maps with contracting fibered
structure.

\begin{Def}\label{def:contract}
Let $(E,d)$, $(X,\de)$ be metric spaces and consider two continuous maps  $\ph:E\to E$ and $\psi:X\to X$.
We say that $(E,\ph)$ has a contracting fibered structure over $(X,\psi)$ when the following conditions hold true.
\vskip1mm
(i) $E$ is metrically fibered over $X$ : there exists a surjective continuous map $\pi:E\to X$, a metric space $(F,\ha d)$
 and a finite open covering $(U_i)_{1\leq i\leq m}$ of $X$ such that for each $i$ there exists an isometry
$\phi_i:\pi\inv(U_i)\to U_i\times F$ (this latter space being equipped with the product metric), such that
$$
\pi(\phi_i\inv(x,y))=x,\qquad \forall (x,y)\in U_i\times F.
$$
We write $\phi_i(z)=(\pi(z),\varpi_i(z))\in U_i\times F$.
 \vskip1mm
(ii) $(X,\psi)$ is a factor of $(E,\ph)$ relative to $\pi$: $\psi\circ \pi=\pi\circ \ph$.
 \vskip1mm
(iii) If $z,z'$ are two points of $E$ such that there exists $i$ and $j$ in $\{1,\ldots,m\}$ such that $z,z'\in\pi_i\inv(U_i)$
and $\ph(z),\ph(z')\in\pi_j\inv(U_j)$, then
$$
\ha d\big(\varpi_j(\ph(z)),\varpi_j(\ph(z'))\big)\leq \ha d\big(\varpi_i(z),\varpi_i(z')\big).
$$
\end{Def}

Maps with contracting fibered structure are natural generalizations of  diffeomorphisms restricted to the stable manifolds of their normally hyperbolic invariant manifolds. Indeed, if a diffeomorphism $\ph$ of a manifold $M$ admits a compact invariant manifold $N$
which is normally hyperbolic, then its stable manifold $W^+(N)$ admits an invariant foliation by the stable manifolds of the points of $N$. Moreover,
there exists a projection $\pi$ from a neighborhood $E$ of $N$ in $W^+(N)$ to $N$, which to each point $x$ associates the unique point $n\in N$
such that $x\in W^+(n)$. It is not difficult to see that one can choose a metric on $M$ in such a way
that $(E,\ph)$ admits a contracting fibered structure over $(N,\ph_{\vert N})$.

We also need a definition enabling us to control the behaviour of a strongly integrable system on its neutral domain.

\begin{Def}
Let $(M^{2\ell},\Om,H,F)$ be a strongly integrable system, with $F$ complete. We say that the neutral domain $\jN$ is {\em regular} when
for $d\in\{0,\ldots,\ell-1\}$, the set $\jN_d=\jN\cap\Sig_d$
it admits a finite or countable covering $(\U_i)_{i\in I}$ satisfying the following two properties for $i\in I$:
\vskip1mm
\noindent (i)  there exists a diffeomorphism $\psi_i$ $\U_i\to T^{d_i}\times \ov B^{d'_i}(0,1)$, with $d_i\leq d-1$ and $d_i+d'_i\leq 2d$;
\vskip1mm
\noindent (ii) the time-one Hamiltonian flow is conjugate by $\psi_i$ to the following normal form
\begin{equation}\label{eq:neutform}
(\th,r)\mapsto(\th+\om(r),r),\qquad (\th,r)\in T^{d_i}\times B^{d'_i}(0,1).
\end{equation}
\end{Def}

Examples of systems with regular neutral domains will be given in \cite{M2}.
Given a vector field $X$ on a manifold $M$, let $x\in M$ et let $\ga : I\to M$ be the solution of $X$
such that $\ga(0)=x$.  Recall that the $\om$-limit set of  $x$ is the set of points
$y$ such that there exists a increasing sequence $(t_n)_{n\geq 0}$ in $I$ with $t_0>0$, such that $\lim_{n\to\infty}\ga(t_n)=x$.

\begin{Def} Let $(M^{2\ell},\Om,H,F)$ be a strongly integrable system, with $F$ complete.
We define the $\om$-limit domain $\jL$ of the system as the union of all $\om$-limit  of points
of the asymptotic domain $\jA$.
\end{Def}

We are now in a position to give our main definition.

\begin{Def}\label{def:decomp}
 Let $(M,\Om)$ be a symplectic manifold of dimension $2\ell$. We say that a Hamiltonian function
$H\in C^\infty(M,\R)$ is {\em decomposable} when it admits a strong  integral $F$ and when in addition:
\vskip1mm\noindent
(i) the neutral domain $\jN$ is regular;
\vskip1mm\noindent
(ii) for $d\in \{0,\ldots,\ell-1\}$,  there exists a neighborhood $\jV_d$ of $\jL_d:=\jL\cap\Sig_d$, invariant under the Hamiltonian flow,
such that, setting
$$
W^+_{\jV_d}(\jL_d)=\{x\in \jV_d\mid \om(x)\subset \jL_d\},
$$
the system $\Big(W^+_{\jV_d}(\jL_d), \ph_{\vert W^+_{\jV_d}(\jL_d)}\Big)$ has a contracting fibered structure over $(\jL_d,\ph_{\vert \jL_d})$.
\end{Def}

Clearly, any Morse function of a symplectic surface defines a decomposable system. More generally, Hamiltonian systems with
Eliasson non-degenerate integrals very often define decomposable systems. To see this we first need an auxiliary definition.
Let $(M,\Om,H,F)$ be an integrable system, with $F$ non-degenerate.  Let $x\in \Sig_d$, $0\leq d\leq\ell-1$.
Then, with the notation of Section \ref{sssec:int}, the Hamiltonian $H$ pass to the quotient in the reduction process by the orbits of the joint flow of
$\ha F$, we denote  the quotient
Hamiltonian function on the level $\ha F\inv \{0\}$ by  $\til H_0:S_0\to\R$. Clearly $d_{x}\til H_0=0$, so the Hessian $q_x=d^2_xH_0$
is a well-defined quadratic form of $Q(\R^{2(\ell-d)})$. We denote by $Q_x$ the Cartan subalgebra spanned by the Hessians of
the components of $\til F_0$ at $x$. As $q_x$ commutes with each of these components, $q_x\in Q_x$.

We say that $H$ is {\em dynamically coherent with $F$ at $x$} when all the coefficients in the linear development $q_x=\sum_{i=1}^{2(\ell-d)} \al_i q_i$ are non zero.
It is easy to see that this last condition is satisfied when the coefficients of the developpement on one single  basis of $Q_x$
are non zero. We then say that $H$ is dynamicall coherent with $F$ when it is dynamically coherent at each point $x$. It turns out that dynamically coherent non-degenerate systems are simple and important examples of decomposable systems,
as will be proved in \cite{M2} where more details and specific examples will be given.


\subsubsection{Weak complexity of decomposable Hamiltonian systems}

We begin with an auxilliary proposition on maps with contracting fibered structure.

\begin{prop}\label{prop:contract} Let $(E,d)$, $(X,\de)$ be metric spaces, and $\ph:E\to E$, $\psi:X\to X$ be continuous maps,
such that $(E,\ph)$ admits a contracting fibered structure over $(X,\psi)$.
Then
$$
\L(\ph)=\L(\psi).
$$
\end{prop}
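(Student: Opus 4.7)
The inequality $\L(\psi) \leq \L(\ph)$ is immediate from the factor property of Proposition \ref{prop:naturality}(2), since condition (ii) of Definition \ref{def:contract} states that $\psi$ is a factor of $\ph$ via the continuous surjection $\pi$. The heart of the matter is therefore to prove the reverse inequality $\L(\ph) \leq \L(\psi)$, which expresses the intuition that the fibers are dynamically ``trivial'' because distances along them are non-increasing.

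The plan is to build, for each $n$ and each small $\eps>0$, an efficient covering of $E$ by $(n,\eps')$--balls from an efficient covering of $X$ by $(n,\eta)$--balls, at the cost of only a multiplicative constant depending on $\eps$ but not on $n$. First I fix $\eta>0$ smaller than a Lebesgue number for the finite open covering $(U_i)_{1\leq i\leq m}$ of the compact space $X$, so that every set of diameter at most $2\eta$ is contained in some $U_i$. Then I fix a finite covering of the fiber $(F,\ha d)$ by balls $B_F(y_\alpha,\eps)$ of radius $\eps$, whose cardinality $N_F$ depends only on $\eps$. Given any minimal covering $\bigl(B_n^\psi(x_k,\eta)\bigr)_{1\leq k\leq K}$ of $X$ (with $K=G_n^\psi(\eta)$), for each $k$ and each $j\in\{0,\ldots,n-1\}$ the iterate $\psi^j\bigl(B_n^\psi(x_k,\eta)\bigr)$ has diameter at most $2\eta$ and hence lies in some $U_{i_j^k}$, which yields a well-defined ``itinerary'' $(i_0^k,\ldots,i_{n-1}^k)$.

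For each pair $(k,\alpha)$ I consider the set
$$
V_{k,\alpha}=\bigl\{z\in \pi\inv(B_n^\psi(x_k,\eta))\mid \varpi_{i_0^k}(z)\in B_F(y_\alpha,\eps)\bigr\}.
$$
The crucial step is to apply condition (iii) of Definition \ref{def:contract} inductively along the itinerary to show that for any $z,z'\in V_{k,\alpha}$ and any $0\leq j\leq n-1$, one has
$$
\ha d\bigl(\varpi_{i_j^k}(\ph^j(z)),\varpi_{i_j^k}(\ph^j(z'))\bigr)\leq \ha d\bigl(\varpi_{i_0^k}(z),\varpi_{i_0^k}(z')\bigr)\leq 2\eps.
$$
Combined with $\de\bigl(\psi^j(\pi(z)),\psi^j(\pi(z'))\bigr)\leq 2\eta$ and the fact that each $\phi_i$ is an isometry for the product (max) metric, this yields $d_n^\ph(z,z')\leq 2\max(\eps,\eta)$. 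Thus $V_{k,\alpha}$ is contained in some $(n,\eps')$--ball of $E$ with $\eps'=3\max(\eps,\eta)$, and the family $(V_{k,\alpha})$ covers $E$, whence
$$
G_n^\ph(\eps')\leq N_F\cdot G_n^\psi(\eta).
$$
Taking logarithms, dividing by $\Log n$ and passing to the $\limsup$ as $n\to\infty$ kills the constant $N_F$ and gives $\ov s_c^\ph(\eps')\leq \ov s_c^\psi(\eta)$. Letting $\eps,\eta\to 0$ yields $\L(\ph)\leq \L(\psi)$.

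The main difficulty that I expect is precisely the choice of the itinerary $(i_j^k)$: the point $\ph^j(z)$ may live in several of the overlapping $U_i$'s, and condition (iii) only applies when both $z,z'$ and $\ph(z),\ph(z')$ lie over the \emph{same} distinguished open sets. The Lebesgue-number argument above guarantees that the whole iterate $\psi^j(B_n^\psi(x_k,\eta))$ fits inside one $U_{i_j^k}$, which is what makes the induction on $j$ go through cleanly.
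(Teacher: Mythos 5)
Your proposal is correct and follows essentially the same route as the paper's proof: a Lebesgue-number choice for the covering $(U_i)$ of $X$, sets built from a dynamical ball in the base crossed with an $\eps$--ball in the fiber, an induction along the orbit using condition (iii) of the contracting fibered structure to keep the fiber separation bounded, and the resulting estimate $G_n^\ph\leq(\text{fiber constant})\cdot G_n^\psi$ whose constant disappears at the polynomial scale. The only differences (explicit itineraries, distinct radii $\eps$ and $\eta$) are cosmetic.
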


\begin{proof} We already know that $\L(\ph)\geq \L(\psi)$ by the factor property.
To prove the converse inequality, consider a finite open covering $(U_i)_{i\in I}$ of $X$ adapted to the fibered structure
and let $\eps_0>0$ be the Lebesgue number of this covering (so each set of diameter less than $\eps_0$ for the metric $\de$
is contained in one of the open sets $U_i$).

\vskip1mm

Let now $N\geq1$ be fixed, choose $\eps<\eps_0/2$ and consider a ball $B^X\subset X$ of  $\de_N^\psi$--radius less than
$\eps$. In particular, $B^X$ has diameter less than $\eps_0$ (for $\de$) and is therefore contained in an element $U_{i_0}$ of
the covering. Consider then a ball $B^F$  of radius $\eps$ in the fiber $(F,\ha d)$.
As $B^X\subset U_{i_0}$, one can define the set
$$
P=\phi_{i_0}\inv \big(B^X\times B^F\big).
$$
We want to prove that $P$ has diameter less than $2\eps$ for the distance $d_N^\ph$.

\vskip1mm

For $z,z'$ in $P$, let $x=\pi_{i_0}(z)$ and $x'=\pi_{i_0}(z')$, then $x$ and $x'$ lie in $B^X$.
Note that for $0\leq k\leq N$, $\psi^k(B^X)$ has diameter less then $\eps_0$ for $\de$, and so is contained in some open set $U_{i_k}$
of the covering.
So, for $0\leq k\leq N-1$, the fibered structure yields the equality:
$$
d(\ph^k(z),\ph^k(z'))=\Max \Big(\de\big(\psi^k(x), \psi^k(x')\big),
\ha d\big(\varpi_{i_k}(\ph^k(z)), \varpi_{i_k}(\ph^k(z'))\big)\Big).
$$
Now by induction, using the inclusion $\psi^k(B^X)\subset U_{i_k}$:
$$
\ha d\Big(\varpi_{i_k}(\ph^k(z)), \varpi_{i_k}(\ph^k(z'))\Big)
\leq \ha d\Big(\varpi_{i_0}(z), \varpi_{i_0}(z')\Big)<2\eps
$$
and on the other hand
$
\de\big(\psi^k(x), \psi^k(x')\big)<2\eps
$
since $x,x'\in B^X$, so
$$
d(\ph^k(z),\ph^k(z'))<2\eps.
$$
This proves that $P$ has diameter less than $2\eps$ for $d_N^\ph$. We denote by $P(B^X,B^F)$ this set.

\vskip1mm

We now fix a minimal covering $B^X_1,\ldots,B^X_n$ of $X$ by balls of radius  $\eps$ for $\de_N^\psi$, and
a finite covering $B^F_1,\ldots,B^F_m$ of the fiber $F$ by balls radius $\eps$ for $\ha d$.
To each pair $(B^X_i, B^F_j)$, we associate the subset $P_{ij}=P(B^X_i, B^F_j)$ of $E$. It is easy to see that $(P_{ij})_{1\leq i\leq n, 1\leq j\leq m}$ is a covering of $E$ by subsets of diameter less than $2\eps$ for $d_N^\ph$,  which shows that
$$
G_N(E,\ph,2\eps)\leq m\, G_N(X,\psi,\eps)
$$
and proves that $\L(\ph)\leq \L(\psi)$.
\end{proof}

\vskip3mm

We are now in a position to state and prove the main result of this section. Recall that if $(M,\Om,H,F)$ is an integrable system, the {\em torsion}
of an invariant Lagrangian torus $T$ is defined as the rank of the Hessian of the normal form of $H$ in any action-angle chart.

\begin{thm}\label{th:decomp}
Let $(M,\Om,H,F)$ be a decomposable system, with $M$ compact and of dimension $2\ell$, and let $\ph$ be its time-one map. Then
$$
\LF(\ph)\leq \ell.
$$
If the system admits a Lagrangian torus with non-degenerate torsion, then $\LF(\ph)=\ell$.
\end{thm}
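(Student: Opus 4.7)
The plan is to split $M$ into three invariant (or at least manageable) pieces according to the decomposition of Lemma \ref{lem:struct3}, namely the action-angle domain $\jD$, the neutral domain $\jN$, and the asymptotic domain $\jA$, and to bound $\L^*(\ph)$ on each of them separately using the $\sig$-union property. Since $\jD$, $\jN$ and $\jA$ are pairwise disjoint and $M=\jD\cup\jN\cup\jA$, the $\sig$-union property yields
$$
\L^*(\ph)=\max\bigl(\L^*(\ph,\jD),\,\L^*(\ph,\jN),\,\L^*(\ph,\jA)\bigr),
$$
and it will suffice to bound each term by $\ell$.

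For $\jD$: by separability of $M$, one can extract a countable subfamily of the action-angle charts that still covers $\jD$. On each such chart $(\U,\J)$ the system is conjugate to a Hamiltonian in action-angle form on $\T^d\times B^d$ with $d\leq\ell$, so Proposition~\ref{prop:actang} gives a complexity index equal to the maximal rank of the Hessian, hence at most $\ell$. The $\sig$-union property then yields $\L^*(\ph,\jD)\leq\ell$. For $\jN$: the regularity assumption provides a countable cover of $\jN_d$ by sets $\U_i$ on which $\ph$ is conjugate to the action-angle normal form (\ref{eq:neutform}) on $\T^{d_i}\times B^{d'_i}$ with $d_i\leq d-1\leq\ell-1$; the same application of Proposition~\ref{prop:actang} (or its obvious variant when angle and action dimensions differ) together with the $\sig$-union property gives $\L^*(\ph,\jN)\leq \ell-1$.

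The heart of the argument is the asymptotic domain $\jA$. Every orbit in $\jA$ has $\om$-limit set contained in $\jL\subset\jD\cup\jN$, so the invariant neighborhoods $\jV_d$ of the decomposability assumption cover $\jA$, and $\jA\subset\bigcup_{d} W^+_{\jV_d}(\jL_d)$. The contracting fibered structure over $(\jL_d,\ph_{\vert\jL_d})$ furnished by Definition~\ref{def:decomp}(ii), combined with (the $\L^*$ version of) Proposition~\ref{prop:contract}, gives $\L^*(\ph, W^+_{\jV_d}(\jL_d))\leq \L^*(\ph,\jL_d)$. Since $\jL_d\subset \jD\cup\jN$, applying the $\sig$-union property once more gives $\L^*(\ph,\jA)\leq \max(\L^*(\ph,\jD),\L^*(\ph,\jN))\leq\ell$, which combined with the previous bounds yields $\L^*(\ph)\leq\ell$. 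The main subtlety here is to verify that the estimate $G_N(E,2\eps)\leq m\,G_N(X,\eps)$ from the proof of Proposition~\ref{prop:contract} does pass to $\L^*$: this goes through because the bound is uniform in $N$ and so transports each covering in $\jC_{\geq N}(X,\eps)$ to one in $\jC_{\geq N}(E,2\eps)$ with at most $m$ times as many balls of the same orders, hence $\de(E,2\eps,s,N)\leq m\,\de(X,\eps,s,N)$ and $s_c(E,2\eps)\leq s_c(X,\eps)$.

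For the lower bound, suppose $T$ is a Lagrangian invariant torus with non-degenerate torsion. Choose an action-angle chart $(\U,\J)$ whose image contains $\T^\ell\times B^\ell$ for a small ball $B^\ell$ on which the Hessian of the normal form has rank $\ell$. By the monotonicity and invariance properties (Proposition~\ref{prop:naturality}), $\L^*(\ph)\geq\L^*(\ph,\U)$, and by Proposition~\ref{prop:actang} this last quantity equals $\ell$, so equality holds. The only real obstacle in the whole argument is the control of $\jA$: one must check that the contracting fibered structure interacts well with the weak index and that $\jL$ indeed lies in $\jD\cup\jN$, which follows from the fact that $\om$-limit sets of points in $\jA$ consist of recurrent points and asymptotic cylinders contain no recurrent point, by the dispersion argument already used in the proof of Paternain's theorem.
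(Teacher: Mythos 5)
Your overall architecture is the same as the paper's: split $M$ into $\jD$, $\jN$ and $\jA$, bound the weak index on each piece by the $\sig$--union property, and transfer the bound across the contracting fibered structure; your remark that Proposition \ref{prop:contract} must be upgraded from $\L$ to $\LF$, via the ball-by-ball transport giving $\de(E,2\eps,s,N)\leq m\,\de(X,\eps,s,N)$, is correct and genuinely needed (the paper uses this for $\LF$ without comment). The gap is in your treatment of the asymptotic domain. You assert $\jL\subset\jD\cup\jN$ on the grounds that ``$\om$-limit sets consist of recurrent points and asymptotic cylinders contain no recurrent point''. The first half of this is not true: an $\om$-limit set need not consist of recurrent points, and the dispersion argument used for Paternain's theorem only shows that no ergodic invariant measure can charge an asymptotic cylinder, which constrains supports of invariant measures, not $\om$-limit sets. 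Concretely, the closure of a $d$-dimensional asymptotic cylinder may contain lower-dimensional asymptotic cylinders (Lemma \ref{lem:struct5} only guarantees that the joint orbits of \emph{minimal} dimension in $\ov C\setm C$ are not asymptotic; separatrix-type orbits in $\ov C\setm C$ are typical), so a priori $\jL_d$ meets $\jA$. With that, your step ``$\L^*(\ph,\jA)\leq \max(\L^*(\ph,\jD),\L^*(\ph,\jN))$'' becomes circular: you bound the index on $\jA$ by a quantity that already involves $\jA$.

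The paper closes exactly this loop by an induction on the rank stratum: by Lemma \ref{lem:struct5} the $\om$-limits of points of $\jA_d$ lie in $\ha\Sig_{d-1}$, and since $\Sig_k=\jD_k\cup\jN_k\cup\jA_k$ one gets $\LF(\ph,\jL_k)\leq\LF(\ph,\Sig_k)\leq k$ from the already-established bounds on $\jD_k$, $\jN_k$ together with the induction hypothesis on $\jA_k$ for $k\leq d-1$; this yields $\LF(\ph,\jA_d)\leq d-1$ and then $\LF(\ph)\leq\ell$ by a finite union. You would need to reorganize your asymptotic-domain argument along these lines (or prove separately that $\jL$ avoids $\jA$, which is not asserted in the paper and would require an argument). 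A secondary, easily repaired, slip: a point of $\jA$ need not lie in $\jV_d$, only some forward iterate does (the $\jV_d$ are invariant), so one must write $\jA_d\subset\bigcup_{m\in\N}\ph^{-m}(\cdots)$ and invoke the transport property together with the $\sig$--union property, as the paper does, rather than saying the $\jV_d$ ``cover'' $\jA$. Your lower bound via an action-angle chart around the torus with non-degenerate torsion agrees with the paper and is fine.
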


\begin{proof} The system is strongly integrable, let $(\Sig_d)_{0\leq d\leq\ell}$ be the decomposition associated with $F$ and denote by
$\jD_d,\jN_d,\jA_d$ the intersections of $\jD,\jN,\jA$ with $\Sig_d$ respectively.
We will consider the restriction of $\ph$ to these domains and prove a suitable inequality in each of them.

\vskip2mm

{\em $\bullet$ The proper action-angle domains $\jD_d$.} The domain $\jD_d$ is a countable union of domains of proper action-angle
charts. In each of those domains, the time-one map $\ph$ is conjugate to a system in action-angle form on a subset of $\T^d\times\R^d$, whose complexity index is at most $d$. So by the restriction and the $\sig$--union properties,  $\LF(\ph,\jD_d)\leq d$.

\vskip2mm

{\em $\bullet$ The neutral domain $\jN$.}  First note that the normal form (\ref{eq:neutform}) shows that the weak complexity index
of $\ph$ on the domain $\U_i$ is less than $d-1$. This is an immediate consequence of the proof of proposition \ref{prop:actang} since
the rank of the map $\om$ is less than $d-1$. Therefore, the regularity assumption ensures the existence of a finite or countable covering
of $\jN_d$ by domains $\U_i$ on which $\LF(\ph,\U_i)\leq d-1$. As a consequence,  $\LF(\ph,\jN_d)\leq d-1$.

\vskip2mm

{\em $\bullet$ The asymptotic domain $\jA$.}  We will prove the inequality
$$
\LF(\ph,\ov\jA_d)\leq d-1,
$$
by induction on $d\geq 1$. For this we first need the following lemma.

\begin{lemma}\label{lem:struct5} For $d\in\{0,\ldots,\ell\}$, we set
$$
\ha \Sig_d=\bigcup_{0\leq d'\leq d} \Sig_{d'}.
$$
We consider  a compact invariant subset $M^*\subset M$, invariant under the joint flow. Then the following properties hold true.

\vskip1mm\noindent
(i) For $d\in\{0,\ldots,\ell\}$, the set $\ha \Sig_d$  is closed and contains the closure $\ov {\Sig_d}$.

\vskip1mm\noindent
(ii) Let $d\in\{1,\ldots,\ell\}$ and consider a $d$-dimensional asymptotic cylinder $C$ contained in $M^*$.
Then $\ov C\setm C$ is nonempty and
contained in
$\ha \Sig_{d-1}\cap M^*$.

\vskip1mm\noindent
(iii)   For $d\in\{1,\ldots,\ell\}$, the union $\om(\jA_d)$ of the $\om$-limit sets of the points of $\jA_d$ is contained in
$\ha\Sig_d$.
\end{lemma}

\begin{proof}
$(i)$ By lower-semicontinuity of the rank, if $x\in \ov \Sig_d$ then $\rk F(x)\leq d$. This proves that
$$
\ov{\Sig_d}\subset \bigcup_{0\leq d'\leq d} \Sig_{d'}.
$$
Now
$
\ov{\bigcup_{0\leq d'\leq d} \Sig_{d'}}=\bigcup_{0\leq d'\leq d} \ov{\Sig_{d'}}\subset \bigcup_{0\leq d'\leq d} \Sig_{d'}
$,
so the union $\ha \Sig_d$ is a closed set.
\vskip1mm\noindent
$(ii)$ By compactness of $M^*$, $\ov C\setm C$ is nonempty. By the previous lemma one sees that $\ov C\subset \ha \Sig_{d}$,
it is therefore enough to prove that a point $x$ of $\Sig_d\setm C$
cannot be  a limit of points of $C$. Let $c$ be the value of $F$ on $C$.

Assume first that $F(x)\neq c$, then by continuity $x$ cannot be a limit of points of $C$. If now $F(x)=c$, then $x$ is contained in its orbit $O$ under the joint flow,  which is also a connected
component of $\Sig_d\cap F\inv(c)$ and therefore closed in $\Sig_d\cap F\inv(c)$. This proves that
$\ov O\cap C=\emptyset$, since $\ov O\subset \ha\Sig_d\cap F\inv(c)$.

As a consequence, the set $\ov C\setm C$ is contained in $\ha \Sig_{d-1}$. As $C$ is obviously invariant under the joint
flow, $\ov C\setm C$ is also invariant, thus it is the union of orbits of dimensions $\leq d-1$. A joint orbit $O$ of minimal
dimension cannot be an asymptotic cylinder, otherwise $\ov O\setm O$ would be contained in $\ov C\setm C$ and would contain
orbits of dimension strictly smaller than that of $O$.
\vskip1mm\noindent
$(iii)$ In view of (ii), it is enough to prove that if $x$ is in an asymptotic cylinder $C$, then the $\om$-limit set $\om(x)$ is contained
in $\ov C\setminus C$, that is $\om(x)\cap C=\emptyset$. This is an immediate consequence of equation (\ref{eq:Liouvilleneut}), since when
one of the components $v_i$ does not vanish, no point of $C$ can be a limit point of a point of $C$.
\end{proof}

\vskip1mm

We now turn back to the induction. With the notation of Definition \ref{def:decomp}, note first that if $\ha\jV_k:=\jV_0\cap\cdots\cap \jV_k$, then
$$
\jA_d\subset \bigcup_{m\in\N}\ph^{-m}(\ha \jV_{d-1}).
$$
Indeed,  if $x\in\jA_d$, then by the previous lemma
$$
\om(x)\subset \ha\Sig_{d-1}\cap \jL=\jL_0\cap\cdots\cap\jL_{d-1}.
$$
As a consequence, there exists $t\geq 0$ such that $\ph^t(x)\in \ha \jV_{d-1}$. Therefore, as each neighborhood $\jV_k$ is invariant
under the Hamiltonian flow, $\ph^N(x)\in\ha \jV_{d-1}$ for $N$ large enough. This proves our claim.

\vskip1mm

-- Let us prove that $\LF(\ph,\jA_1)=0$.  We have seen that $\jA_0=\cup_{m\in\N}\ph^{-m}(\ha \jV_{0})$, and we know that $\jL_0$ is a subset
of $\Sig_0$, which is finite by compatness. Therefore $\LF(\ph,\jL_0)=0$. Since $(\jV_0,\ph_{\vert \jV_0})$ has a contracting fiber structure over
$(\jL_0,\ph_{\vert \jL_0})$, this proves that $\LF(\ph,\jV_0)=0$ and our claim follows from the restriction and $\sig$-union properties.

\vskip1mm

-- Assuming now that $\LF(\ph,\jA_k)\leq d-1$ for $1\leq k\leq d$, we will prove that $\LF(\ph,\jA_{d+1})\leq d$. Clearly, it is enough to prove that
$\LF(\ph, \ha \jV_{d})\leq d$. Remark that
$$
\LF(\ph, \ha \jV_{d})=\Max_{1\leq k\leq d}\LF(\ph, \jV_k)=\Max_{1\leq k\leq d}\LF(\ph, \jL_k).
$$
Now $\LF(\ph, \jL_k)\leq \LF(\ph, \Sig_k)$ since $\jL_k\subset \Sig_k$. Moreover, $\Sig_k=\jD_k\,\cup\,\jN_k\,\cup\,\jA_k$, therefore
$
\LF(\ph, \Sig_k)\leq k
$
by the previous results on $\jD$ and $\jN$ and by the induction hypothesis on $\jA_k$. This proves that $\LF(\ph,\jA_d)\leq d-1$.

\vskip2mm

$\bullet$ Now a simple finite union argument shows that $\LF(\ph)\leq\ell$.

\vskip2mm

Finally, when $\ph$ admits a Lagrangian torus with non-degenerate torsion, the system is locally conjugate to a system
whose weak index is equal to $\ell$, which proves that the global weak index is indeed equal to $\ell$.
\end{proof}


\section{Complexity indices of hamiltonian systems on surfaces}

Hamiltonian systems on surfaces are geometrically integrable, still their
level sets may be extremely intricate. Here we analyze the complexity of Hamiltonian flows associated with
Morse Hamiltonian functions only. The main result of this section is the following.

\vskip1mm

\begin{thm}\label{thm:indham}
Let $\S$ be a smooth compact symplectic surface with boundary $\d \S$, and let $H$ be a smooth Hamiltonian function on $\S$ with non-degenerate critical points, which is constant and regular on each connected component of $\d\S$.  Let $\phi$ be the time-one map of the Hamiltonian vector field of $H$. Then $\L(\phi)\in\{0,1\}$ if $H$ has no critical point of index~$1$, and $\L(\phi)=2$ if $H$ admits at least
one such critical point.
\end{thm}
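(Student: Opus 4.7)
The plan is to decompose $\S$ into finitely many $\phi$-invariant pieces using the Morse structure of $H$, then to apply the finite-union and restriction properties of $\L$ (Propositions~\ref{prop:naturality} and~\ref{prop:constructions}) to compute $\L$ on each piece separately. Concretely, if $c_1<\cdots<c_N$ are the critical values of $H$, the singular level sets $H\inv(c_i)$ cut $\S$ into a finite disjoint union of open bands $U_\alpha$ foliated by regular periodic orbits; their closures $\ov{U_\alpha}$ are $\phi$-invariant and cover~$\S$. By the finite-union property, $\L(\phi)=\Max_\alpha \L(\phi,\ov{U_\alpha})$, so it suffices to bound $\L(\phi,\ov{U_\alpha})$ on each closed band.

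On a band $\ov{U_\alpha}$ whose boundary contains no saddle, the Liouville--Arnold theorem provides global action-angle coordinates, and Proposition~\ref{prop:actang} directly yields $\L(\phi,\ov{U_\alpha})=\Max_r \rk \om'(r)\in\{0,1\}$, the value $0$ occurring precisely when the frequency $\om$ is constant. A neighborhood of an elliptic critical point is handled similarly: the flow is locally symplectically conjugate to $(\theta,r)\mapsto(\theta+\om(r),r)$ on $\T\times[0,\eps_0)$, so Proposition~\ref{prop:actang} gives $\L\leq 1$ on any subannulus $\T\times[\rho,\eps_0]$, while on a small disc around the fixed point itself the time-one map is $C^0$-close to a rotation, hence $\L=0$ there; finite union yields $\L\leq 1$. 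This already proves the first assertion: if $H$ has no critical point of index~$1$, then $\L(\phi)\in\{0,1\}$.

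The main obstacle is the case of a band $\ov{U_\alpha}$ whose boundary contains a saddle~$s$, where the claim is $\L(\phi,\ov{U_\alpha})=2$. The plan is to use the smooth symplectic Morse normal form for a non-degenerate saddle in dimension two (Colin de Verdi\`ere--Vey, Eliasson) to write $H=H_s+f(pq)$ locally, with $f$ smooth and $f'(0)\neq 0$; the time-one map is then the explicit expression $(p,q)\mapsto(p\,e^{-f'(pq)},q\,e^{f'(pq)})$, and the frequency along the global level $\{H=H_s+\eta\}$ behaves as $\om(\eta)\sim C/\Log(1/|\eta|)$, so $\om'(\eta)\sim 1/(\eta\Log^2(1/|\eta|))$ blows up as $\eta\to0$. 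For the lower bound $\L\geq 2$, the plan is to construct, for every $\nu>0$, an $(n,\eps)$-separated set of cardinality $\gtrsim n^{2-\nu}$: one selects $\sim n^{1-\nu/2}$ levels $\eta_k$ whose frequencies differ pairwise by at least $\eps/n$ (which forces the $\eta_k$ to lie on a logarithmic scale near~$0$) and, on each selected level, $\sim n^{1-\nu/2}$ well-distributed iterates of a base point, using that the period there is $\gtrsim n$. The hardest step is the upper bound $\L\leq 2$: the dynamical balls near~$s$ are exponentially elongated along the hyperbolic directions, so a naive Lipschitz-type estimate would give $\L=\infty$. The plan is to stratify a neighborhood of~$s$ by the ``sojourn time'' spent near the saddle before the orbit returns to the bulk of $\ov{U_\alpha}$, and to combine area-preservation of~$\phi$ with the Dulac-type corner map along the separatrix to show that each stratum admits a covering by $O(n^{2+\eta})$ dynamical balls for every $\eta>0$; gluing these to the action-angle coverings on the complement yields $\L(\phi,\ov{U_\alpha})\leq 2$. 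Assembling the three cases via the finite-union property then concludes the proof.
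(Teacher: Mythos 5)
Your global scheme and your treatment of the regular and elliptic pieces essentially coincide with the paper's: decompose $\S$ into invariant components attached to the critical levels, use Proposition \ref{prop:actang} on action-angle bands, and get $\L\leq 1$ near index $0$ or $2$ points. One small slip there: the claim that $\L=0$ on a small invariant disc because $\phi$ is ``$C^0$-close to a rotation'' is not valid as stated (a thin twist annulus is $C^0$-close to a rigid rotation yet has index $1$); the correct and easy fix, used in the paper, is that for each $\eps$ one can choose an invariant sub-disc of diameter $<\eps$, which is covered by a single $(n,\eps)$-ball for every $n$, so that $G_n(\jC,\eps)\leq c_\eps n+1$.

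The genuine gaps are in the saddle case, on both sides of the estimate. For the lower bound, your selection rule is quantitatively inconsistent: if the chosen levels all have period $\gtrsim n$, their frequencies lie in an interval of length $O(1/n)$, so pairwise frequency gaps $\geq\eps/n$ allow only $O(1/\eps)$ levels, not $n^{1-\nu/2}$; more generally this criterion caps the total count at $O(n/\eps)$ and can only give $\L\geq1$. The quadratic count requires separating orbits whose periods differ by a \emph{bounded} amount (frequency gaps of order $1/n^2$): after one full period of the faster orbit the slower one lags by an $O(1)$ time, and one must show this lag is realized as an $\eps$-distance at a moment when both points sit in the bulk region -- near the saddle, long arcs of the level curves are metrically tiny, so ``angular'' separation does not imply metric separation. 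This is exactly what the paper's torsion and tameness conditions encode (see the separated sets built from periods in $[N/3,N/2]$ differing by at least $4$ in the proof of Proposition \ref{prop:pmodel}). For the upper bound, area preservation cannot help: it bounds covering numbers from below, not above, and a purely local Dulac/corner analysis at $s$ does not control the dynamical balls, which are determined by the \emph{global} mutual drift of nearby levels. The paper's proof rests on a drift estimate (Lemma \ref{lem:fatten}: levels whose periods differ by at most $c_1\eps/[m/q]$ stay $\eps$-close for $m$ iterates), combined with the cutoff at period $\ka N$ (linear covering near the polycycle) and a summation of $O(N^2/k^2)$ over period strata; the torsion and tameness needed for this are not local facts at the saddle but are manufactured through the explicit semi-conjugacy onto a tame $p$-model (Proposition \ref{prop:desing}: choice of sections, transition-time functions $\sig_k$, gluing). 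Finally, your single-saddle normal form does not address critical levels that are polycycles with several index-$1$ points, which the paper identifies as the main difficulty and which is precisely what forces the $p$-model formalism; your route would at least need the period asymptotics and the drift control for the full return map past all the corners of the polycycle.
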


\vskip1mm

One can be more precise in the case when $H$ has no critical point of index $1$.
First note that this can only happen when the surface $\S$ is a disc, an annulus or a sphere. In the first case, $H$ has a single critical point
of index $0$ or $2$, in the second one $H$ has no critical point at all, and in the last one $H$ has two critical points, one of index $0$ and one of index $2$. The complement of the critical points is entirely foliated by periodic orbits on which $\phi$ is conjugated to a rotation, whose angle
depends on the orbit. Then one easily sees that $\L(\phi)=0$ if and only if this dependence is trivial, that is  $\phi$ globally acts as a rotation
on the complement of the critical points, with constant angle (this will be stated more precisely and proved below).

\vskip3mm

The main difficulty therefore comes from the critical levels which contain index $1$ critical points. Such a level may contain several critical points
(a``polycycle''), which makes the study of the index more complicated. In order to overcome this difficulty we introduce a method of {\em
dynamical desingularization} which amounts to semi-conjugating the system in suitable  ``partial neighborhoods'' of a polycycle to a model
system (a $p$--model on an annulus) in such a way that the complexity index is preserved. The main task of this section will be first to
define these $p$--models and compute their complexity index (under suitable conditions)  and second to prove that such dynamical desingularizations allows one to get coverings of neighborhoods of polycycles with computable complexity index. This desingularization method can be extended
to multidimensional systems, as we will show in \cite{MM}.


\subsection{The singular model on the annulus}
In the following we denote by $d$ the usual metric on $\T$ and we write $\jA$ for the compact annulus $\T\times[0,1]$.
We equip $\jA$ with the canonical product metric, which we still denote by $d$ when there is no risk of confusion.

\begin{Def}\label{def:model}
Given an integer $p\geq1$, we call {\em $p$--model}  on $\jA$ any continuous vector field of the form
\begin{equation}\label{eq:genform}
V(\th,r)=\Th(\th,r) \Dron{}{\th}
\end{equation}
on $\jA$, smooth on $\T\times\,]0,1]$, which satisfies  the following conditions:
\vskip 1mm
\noindent
{\rm (C1)}   $\Th(\th,r)>0$ if $r >0$, so each solution with $r>0$ is periodic;
\vskip 1mm
\noindent
{\rm (C2)}  the points $o_k=(k/p,0)$, $k\in\{0,\ldots, p-1\}$, are the only singular points of $V$ on $\jA$, that is $\Th(k/p,0)=0$
and $\Th(\th,0) > 0$ if $\th \notin\{ k/p\mid k\in\{0,\ldots, p-1\}\}$;
\vskip 1mm
\noindent
{\rm (C3)} in the neighborhood $\O_k$ of $o_k$  defined by
$\abs{\th-k/p}<1/(8p)$,  the function $\Th$ admits the following normal form
\begin{equation}\label{eq:genormform}
\Th(\th,r)=\ell_k(r)\sqrt{\rho_k (r)+(\th-\frac{k}{p})^2},
\end{equation}
where $\ell_k$ and  $\rho_k$ are positive smooth functions on $[0,1]$, with $\rho_k$ monotone increasing.
\vskip 1mm
\end{Def}

Note that the neighborhoods $\O_k$ are pairwise
disjoint. In the following a $p$--model will generally be denoted by a pair $(\jA,V)$.
Remark that a $p$--model have a well-defined continuous flow, smooth on $\T\times\,]0,1]$. We could indeed have worked with continuous flows instead
of vector fields, but is seems that the present framework makes the main ideas  more transparent.

\begin{figure}[h]
\begin{center}
\begin{pspicture}(4cm,5.3cm)
\psset{xunit=.8cm,yunit=.8cm}
\rput(2.5,4){
\pscircle(0,0){2}
\pscircle[linewidth=.4mm](0,0){1.2}
\pscircle[linewidth=.1mm](0,0){1.8}
\pscircle[linewidth=.1mm](0,0){1.6}
\pscircle[linewidth=.1mm](0,0){1.4}
\pscircle[fillstyle=solid,fillcolor=black](2.5,0){.07}
\pscircle[fillstyle=solid,fillcolor=black](-1.25,-2.16){.07}
\pscircle[fillstyle=solid,fillcolor=black](-1.25,2.16){.07}
\psarc[linewidth=.6mm]{->}(0,0){2}{58}{62}
\psarc[linewidth=.6mm]{->}(0,0){2}{178}{182}
\psarc[linewidth=.6mm]{->}(0,0){2}{298}{302}
\psline[linewidth=.6mm](1.45,.39)(2.41,.65)
\psline[linewidth=.6mm](1.45,-.39)(2.41,-.65)
\psline[linewidth=.6mm](-.388,1.45)(-.647,2.415)
\psline[linewidth=.6mm](-1.06,1.06)(-1.77,1.77)
\psline[linewidth=.6mm](-.388,-1.45)(-.647,-2.415)
\psline[linewidth=.6mm](-1.06,-1.06)(-1.77,-1.77)
\pscircle[linewidth=.01, fillstyle=solid,fillcolor=white](2,0){.3}
\rput(2,0){$\O_1$}
\pscircle[linewidth=.01, fillstyle=solid,fillcolor=white](-1,1.73){.3}
\rput(-1,1.73){$\O_2$}
\pscircle[linewidth=.01, fillstyle=solid,fillcolor=white](-1,-1.73){.3}
\rput(-1,-1.73){$\O_3$}
\rput(3,0){$o_1$}
\rput(-1.5,2.5){$o_2$}
\rput(-1.45,-2.45){$o_3$}
}
\end{pspicture}
\end{center}
\vskip-1.6cm
\caption{A $3$--model.}
\end{figure}
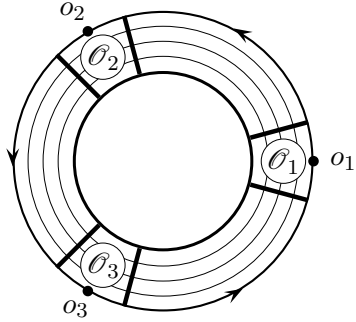

In order to facilitate the determination of the index of a $p$--model,  we have to add two technical conditions that we now define.
Given a $p$--model $(\jA,V)$, in the following we will denote by $\Phi:\R\times\jA\to\jA$ its flow, by $\ph_t$ the deduced time-$t$ map
and by $\ph$ the time-one map. We also set $\til\jA=\R\times[0,1]$ for the universal covering of $\jA$, which we endow with the coordinates
$(x,r)$.

\begin{Def}\label{def:torsion} {\bf (Torsion condition).} Consider a $p$--model $V$ on the strip $\jA$ and a lift $\til \Phi$ of its flow $\Phi$
to $\til \jA$, with associated maps $\til \ph_t$, $t\in\R$.
We say that $V$ satisfies the {\em torsion condition} when,
given $x\in\,\R$ and $0\leq r_1<r_2\leq 1$,
and setting $\til \ph_t(x,r_i)=(x_i(t),r_i)$, $i=1,2$, then the inequality $x_1(t) < x_2(t)$ holds true for each $t >0$
(so the vertical is twisted to the right by the map $\til \ph_t$).
\end{Def}

One easily checks that the previous definition makes sense, the twist condition being independent of the choice of the lift $\til\Phi$.

\vskip2mm

To introduce the second condition we first need to define what we call the {\em separation function} for two points on the same orbit of
a $p$--model $(\jA,V)$. With the same notation as above, we consider two points
$a=(\th,r)$ and $a'=(\th',r)$ of $\jA$, two lifts $\til a,\til a'$ located in the same fondamental domain of the covering $\jA$, and we
set $\til \ph_t(\til a)=(x(t),r)$ and
$\til \ph_t(\til a')=(x'(t),r)$. Then we define the separation of $a$ and $a'$ as the function $E_{a,a'}:\R\to \R$ defined by
$$
E_{a,a'}(t)=\abs{x'(t)-x(t)},
$$
so $E_{a,a'}$ is independent of the lift, $C^\infty$, non negative and  periodic (withe the same period as $a$ and $a'$). We are interested in the behaviour of the maxima of $E$.

\vskip2mm

We define here a {\em fundamental domain} for the flow $\Phi$ on $(\jA,V)$ as a subset $\K$ of $\jA$ of the form $\Phi([0,1],\De)=\cup_{t\in[0,1]}\ph_t(\De)$, where $\De$ is a vertical segment of equation $\th=\th_0$.

\begin{Def}\label{def:tameness}{\bf (Tameness).}  We say that a $p$--model $(\jA,V)$
is {\em tame} when there exists a fundamental domain $\K$ for $\Phi$ and a constant $\varpi>0$ such that, given two points $a$ and $a'$  on the
same orbit, then for each $t_0$ such that $E_{a,a'}(t_0)$ is maximum,
the points $\ph_{t_0}(a)$ and $\ph_{t_0}(a')$ are located inside the domain $\K$.
\end{Def}

Here the distance $d(a,a')$ is just the distance on the circle $\T$, since the points are on the same orbit.
 The tameness and torsion conditions will be used below to compute the complexity index of a $p$--model in a (quite) simple way.


\subsection{The complexity index of a $p$--model}
It turns out that the complexity index of a tame $p$--model with torsion does not depend on $p$. This will be the main result
of this section.

\begin{prop}\label{prop:pmodel}
Let $(\jA,V)$ be a tame $p$--model with torsion, $p\geq1$, and let  $\ph$ be its  time-one map. Then $\L(\ph)=2$.
\end{prop}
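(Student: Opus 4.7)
The plan is to establish $\L(\phi)\le 2$ and $\L(\phi)\ge 2$ separately, using the normal form (C3) in a crucial way.

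For the upper bound, fix $\eps>0$ and construct, for each $n\ge 1$, a covering of $\jA$ by $(n,\eps)$-balls of cardinality $O(n^2)$. Decompose $\jA$ into the complement of the singular neighborhoods and the neighborhoods $\O_k$ themselves. On the complement the flow has uniformly bounded period, and a twist-type argument as in Proposition~\ref{prop:actang} produces $O(n)$ balls. Inside each $\O_k$, exploit the normal form (C3): the change of variable $u=\sinh^{-1}\!\big((\theta-k/p)/\sqrt{\rho_k(r)}\big)$ rectifies the flow into the uniform translation $(u,r)\mapsto(u+\ell_k(r),r)$. In these coordinates the $d_n^\phi$-balls have a simple product structure, distorted by the relation $|\theta-\theta'|\approx \sqrt{\rho_k(r)}\cosh(u)\,|u-u'|$. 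A careful accounting produces $O(n^2)$ balls per neighborhood, with one factor of $n$ coming from the radial twist through $\ell_k'(r)$, and the second from the partition of the $n$-orbit as it passes through the saddle, in the spirit of the gradient-flow covering argument of Proposition~\ref{prop:indseg}.

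For the lower bound, I would construct, for small $\eps$, an $(n,\eps)$-separated subset of cardinality at least $C_\eps n^{2-o(1)}$. Two independent sources of complexity are exploited multiplicatively. First, by the torsion condition (Definition~\ref{def:torsion}), orbits at radii $r_i<r_{i+1}$ spaced by $\eps/(n|\omega'(r_i)|)$, where $\omega(r)=1/T(r)$, are $(n,\eps)$-separated by the accumulated shear, yielding up to $\sim n$ such orbits once one takes advantage of the large values of $|\omega'|$ in a neighborhood of the polycycle. Second, by the tameness condition (Definition~\ref{def:tameness}), on an orbit at radius $r$ with $T(r)\lesssim n$ any pair $a$, $a'=\phi_s(a)$ has its separation function $E_{a,a'}(t)$ attaining its maximum at some $t^\star$ for which $\phi_{t^\star}(a),\phi_{t^\star}(a')\in \K$; since $\Theta\ge c>0$ on the fundamental domain $\K$, this gives $E_{a,a'}(t^\star)\ge c\,s$, so time-offsets spaced by $\eps/c$ produce $\sim T(r)/\eps$ pairwise $(n,\eps)$-separated points on the orbit. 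Taking orbits near the polycycle with $T(r)$ of order $n$ and combining the two counts yields the required $n^2$ lower bound.

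The main obstacle I anticipate is the covering inside $\O_k$: the rectifying change of variable simplifies the dynamics but introduces the hyperbolic-sine distortion of the metric, and one must verify that this does not inflate the covering beyond $O(n^2)$ balls. The tameness and torsion conditions are, respectively, exactly what is needed to control the maxima of the separation function in the angular direction and to guarantee the spreading of nearby orbits in the radial direction; together they prevent both degenerate gluings in the upper-bound covering and insufficient separation in the lower-bound construction.
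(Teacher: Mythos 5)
Your overall strategy (coverings for $\L(\ph)\le 2$, separated sets in a ``boundary layer'' of orbits with period comparable to $n$ for $\L(\ph)\ge 2$) is the same as the paper's, but both halves have genuine gaps as written. For the upper bound, your decomposition is a \emph{fixed} one (complement of the $\O_k$ versus the $\O_k$), and the claim that on the complement ``the flow has uniformly bounded period'' is false: every orbit $\Ga_r$ with $r>0$ meets that complement, and $T(r)\to+\infty$ as $r\to 0$, so no argument in the style of Proposition \ref{prop:actang} applies there off the shelf. More seriously, the quadratic count has to be justified in the recurrent regime, i.e.\ for points on orbits of period $q\lesssim n$: such orbits leave and re-enter each $\O_k$ of order $n/q$ times during the first $n$ iterates, so the $d_n$-balls are governed by the drift accumulated over all these returns, not by the local rectified normal form $(u,r)\mapsto(u+t\,\ell_k(r),r)$, which only describes one passage. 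The paper's proof is organized precisely around this: an $N$-dependent cutoff at period $\ka N$ (Lemma \ref{lem:choice}) separating the ``gradient-like'' zone (which yields the $O(N)$ part) from the recurrent zone; then, in the recurrent zone, a covering of each closed orbit $C_q$ by $\sim q$ sets of $d_m$-diameter $\le\eps/2$ (this is where tameness is used), fattened into strips of period-width $c_1\eps/[m/q]$ controlled by the torsion-based drift function $D(a,t)$ (Lemma \ref{lem:fatten}), and finally a summation over the strips ${\bf S}_k$ giving $\sum_k c_\eps N^2/k^2=O(N^2)$. None of this bookkeeping is present in your sketch, and ``one factor of $n$ from the radial twist through $\ell_k'(r)$'' is not the relevant mechanism (the radial dependence that matters is that of the period, through $\rho_k$).

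For the lower bound, two points. First, your radial count invokes a quantitative shear via $\om'(r)$, but the torsion hypothesis is purely qualitative (strict monotonicity of the lifted angular coordinate in $r$); no lower bound on $|\om'|$ or $|T'|$ is available. Second, and more importantly, the multiplicative $n\times n$ count requires that \emph{every} pair consisting of one point on $C_q$ and one point on $C_{q'}$ (for two distinct selected orbits) be $(N,\eps)$-separated; accumulated shear only separates pairs with matching initial angular position, and an arbitrary initial angular offset can cancel the drift at all times $\le n$ unless one argues more. The paper resolves this by choosing integer periods $q,q'\in[N/3,N/2]$ with $q-q'\ge 4$ and using the windows $I_q=[a_q,\ph(a_q)[$ and $J_{q'}=[\ph\inv(a_{q'}),\ph^2(a_{q'})[$ near the marked vertical: after $n_0$ steps the first point lies in $I_q$, and after $q+n_0\le N$ steps it returns to $I_q$ while the second point has undergone a relative time-shift $q-q'\ge 4$ and so has left $J_{q'}$, forcing distance $>\eps$. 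Your per-orbit count ($\sim T(r)$ separated points along one orbit, using tameness and the positive speed on $\K$) is essentially the paper's first step and is fine, but without a correct cross-orbit separation argument the $n^2$ lower bound does not follow.
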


\begin{proof} We will first prove that $\L(\ph)\leq 2$ by exhibiting suitable coverings
of $\jA$, and then that $\L(\ph)\geq 2$ by finding separated sets. Let us introduce some notation.
\vskip1mm\noindent
-- Everywhere, when necessary, we consider the index $k$ as an element of $\Z_p$.
\vskip1mm\noindent
-- Given $r\in\,]0,\de]$, we denote by $\Ga_r$ the orbit with ordinate $r$ in $\jA$ and  by $T(r)$  the period  of motion on $\Ga_r$.
Note that $T(r) \to+\infty$ when $r\to 0$.
\vskip1mm\noindent
-- Due to the torsion condition, $T$ is a decreasing function from $]0,1]$ to $[q^*,+\infty[$, where $q^*$ is the period of motion on
$\Ga_1$. Therefore one can also label the orbits by their period: we write $C_q$ the orbit with period $q$, so $C_{T(r)}:=\Ga_r$.
\vskip1mm\noindent
-- We write $C_{\infty}$ for the boundary $r=0$.
\vskip1mm\noindent
-- Given two periods $q$ and $q'$ with $q'\leq q\leq+\infty$, we denote by $S_{q,q'}$ the annulus
bounded  by the curves $C_q$ and $C_{q'}$.
\vskip1mm\noindent
-- For $m\in\N$, we denote by $d_m$ the dynamical distance of order $m$ on $(\jA,d)$ associated with the map $\ph$ (where
$d$ is the usual product distance on $\jA$).

\vskip2mm

{\bf 1. Proof of $\L(\ph)\leq 2$.} Given $\eps>0$ and an integer $N$, we want to construct an $(N,\eps)$--covering of $\jA$.
We will have to discriminate between two different regimes for the system: close enough to the boundary $C_\infty$ the $N$ first iterates
roughly behave as those of a gradient system, and in particular do not experience any recurrence phenomenon, while on the complement
one has to take into account such phenomena together with  the drift between nearby orbits.
So we will split  $\jA$ into two $N$--depending annulus and separately construct $(N,\eps)$--coverings for these two domains.

\vskip2mm \noindent

{\bf 1.a.  Choice of suitable domains.} The following lemma will enable us to introduce a suitable
cutoff for the transition time and discriminate between the two regimes.

\vskip1mm

\begin{lemma}\label{lem:choice} Let $\eps>0$ be fixed.
For $k\in\{1,\ldots,p\}$, let  $\B_k$ be the ``block'' of $\jA$ limited by the  vertical
segments $\De^+_k$ and $\De^-_k$ of equations $\th=k/p-\eps/2$ and $\th=k/p+\eps/2$ respectively.
Then,  there exists a constant $\ka$ and an integer $N_0$  (both depending on $\eps$) such that if $N\geq N_0$,  for each index $k\in\{0,\ldots,p-1\}$:
$$
\ph^n(\De^+_k(\ka N))\subset \B_k,\qquad \forall n\in\{0,\ldots,N\},
$$
where we write $\De^-_k(q)$ for the intersection of the left vertical $\De^-_k$ of $\B_k$ with the annulus $S_{\infty,q}$.
\end{lemma}

\begin{figure}[h]
\begin{center}
\begin{pspicture}(4cm,6.2cm)
\rput(2,4){
\psline[linewidth=.4mm]{->}(-3,0)(3,0)
\psline[linewidth=.3mm](-3,1.5)(3,1.5)
\psline[linewidth=.3mm](-3,.4)(3,.4)
\psline[linewidth=.3mm](-1,0)(-1,1.5)
\psline[linewidth=.3mm](1,0)(1,1.5)
\pscircle[fillstyle=solid,fillcolor=black](0,0){.07}
\psline[linewidth=.5mm](-1,0)(-1,.4)
\psline[linewidth=.5mm](-.9,0)(-.88,.4)
\psline[linewidth=.5mm](-.8,0)(-.77,.4)
\psline[linewidth=.5mm](-.7,0)(-.65,.4)
\psline[linewidth=.5mm](-.2,0)(.5,.4)
\psline[linewidth=.5mm](-.1,0)(.8,.4)
\rput(-.3,.2){$\ldots$}
\rput(0,.8){$\B_k$}
\rput(-1.7,-.6){$\De_k^+(\ka N)$}
\rput(2.2,-.6){$\ph^N(\De_k^+(\ka N)$)}
\rput(0,-.4){$O_k$}
\psline[linewidth=.1mm](-1.7,-.3)(-1,.2)
\psline[linewidth=.1mm](1.5,-.3)(.4,.2)
\rput(-1.7,2){$\De_k^+$}
\psline[linewidth=.1mm](-1.8,1.8)(-1,.8)
\rput(1.9,2){$\De_k^-$}
\psline[linewidth=.1mm](1.8,1.8)(1,.8)
\rput(3.7,.15){$S_{\infty,\ka N}$}
\rput(-3.6,.45){$C_{\ka N}$}
}
\end{pspicture}
\end{center}
\vskip-3.4cm
\end{figure}

\begin{proof}
Let us first compute the transition time in the bloc $\B_k$ as a function of $r$, that is the time $\tau_k(r)$ needed
to go from the entrance boundary $\De_k^+$ to the exit boundary $\De_k^-$, on the orbit $\Ga_r$. By condition
(C3) we have to integrate the linear equation
$$
\dot u=\ell_k(r)\sqrt{\rho_k(r)+u^2},
$$
which immediately yields
$
\tau_k(r)=\Frac{2}{\ell_k(r)}\,{\rm Argsh\,} \Big(\Frac{\eps}{\sqrt {\rho(r)}}\Big).
$
Therefore one gets the following equivalent when $r\to 0$:
$$
\tau_k(r)\sim_{r\to 0} -\Frac{1}{\ell_k(r)} \,\Log \rho(r).
$$
For small
enough $r$, the period $T(r)$ is clearly equivalent to the sum of the transition times in the blocks, and therefore
$$
\Frac{T(r)}{\tau_k(r)}\sim_{r\to 0} \Big(\sum_{i=1}^p\Frac{1}{\ell_i(0)}\Big)\ell_k(0):=\mu_k
$$
Let $\ka=[2\Max_{1\leq k\leq p}\mu_k]+1\in\N$, then for $r$ small enough
$
\tau_k(y)\geq \frac{1}{\ka}\, T(r)
$
for all $k\in\{1,\ldots,p\}$. Therefore for $N$ large enough $\ph^N(\De_k^-(\ka N))\in\B_k$ for all $k$ (recall that the period
is decreasing with $r$), which proves the proposition.
\end{proof}

\begin{figure}[h]
\begin{center}
\begin{pspicture}(4cm,5.5cm)
\rput(2,4){
\psset{xunit=1cm,yunit=1cm}
\pscircle[linewidth=.4mm](0,0){1.5}
\pscircle[linewidth=.4mm](0,0){.9}
\pscircle[linewidth=.4mm](0,0){1.35}
\rput(0.1,0){$\jA^*_{N}$}
\psline[linewidth=.2mm](.5,0)(1.1,0)
\rput(3,0){$\jA_{N}=S_{\infty,\ka N}$}
\psline[linewidth=.2mm](1.4,0)(1.8,0)
}
\end{pspicture}
\end{center}
\vskip-2.7cm
\caption{The two sub-annuli adapted to the different dynamical behaviours.}
\end{figure}
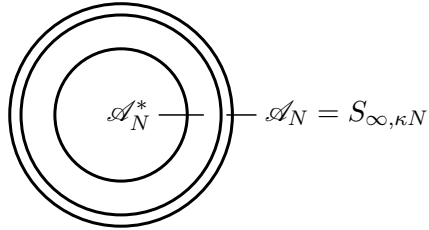

As we will see, the period $\ka N$ is the natural cutoff we were looking for. So we introduce the two $N$-dependent sub-annuli
$$
\jA_{N}=S_{\infty,\ka N},\quad \jA^*_{N}=\jA\setm \jA_{N}
$$
for which we will separately construct adapted coverings.

\vskip2mm

{\bf 1.b. Covering of the annulus $\jA_{N}$.} In this domain, on a time scale of length $N$, the system behaves almost like a gradient
vector field and our arguments will be quite similar to those of proposition \ref{prop:indseg}.  Given $\eps>0$ and $N$ large enough, we will contruct a covering of $\jA_{N}$, by subsets of $d_N$-diamter less than $\eps$, by separately considering the blocks $\B_k$ and their complement.

For $k\in\{1,\ldots,p-1\}$, we define the transition time between $\B_k$ and $\B_{k+1}$ on the boundary
$C_\infty$  as the smallest integer $\nu_k$ such that $\ph^{\nu_k}(a_k)\in \B_{k+1}$, where $a_k$ is the point of coordinates $(\frac{k}{p}+\frac{\eps}{2},0)$,
that is the intersection point of $\De_k^-$ with $C_\infty$.

Let $\nu=\Max_k\nu_k$. Then clearly for $N$ large enough the inclusion
$\ph^{\nu}(\De_k^-(\ka N))\subset \B_{k+1}$ holds true for each $k$, we will assume this condition fulfilled in the following.

We set $\B=\cup_{1\leq k\leq p}\B_k$.
By compactness, it is possible to find a {\em finite} number of subsets $B_1,\ldots, B_{i^*}$ with $d_\nu$--diameter
$\leq\eps$, which  cover $\jA_N\setm \B$. Moreover, one can obviously assume that each $B_i$ is contained in some connected component of
$\jA_N\setm \B$.

Now the point is that, due to our choice of the cutoff of period $\ka N$,  each iterate of rank $n\in\{\nu, \ldots, N\}$ of each domain $B_i$ is contained in some $\B_k$. Indeed, assume that $B_i$ is contained in the zone limited by the curves $\De_k^-(\ka N)$ and
$\De_{k+1}^+(\ka N)$  (according to the direct
orientation on $\T$). Then the iterate $\ph^n(B_i)$ is contained in the region limited by  $\ph^\nu(\De_k^-(\ka N))(\subset \B_{k+1})$ and
$\ph^N(\De_{k+1}^+(\ka N))$,  which is itself contained
in $\B_{k+1}$ by lemma~\ref{lem:choice}. Therefore, since the $d$--diameter of $\B_{k+1}$ is $\eps$:
$$
\diam_N(B_i)\leq \eps.
$$
We therefore have produced a good covering for $\jA\setminus \B$. The covering of $\B$ will be done in two steps.
Consider first the regions $\U_k$ in $\jA_{N}$ bounded by $\De_k^-(\ka N)$ and $\ph(\De_{k}^-(\ka N))$, where we assume $\eps$ small enough and $N$ large enough so that $\ph(\De_{k}^-(\ka N))\cap \B_{k+1}=\emptyset$.

We denote by $U_1,\ldots, U_{i^{**}}$ the nonempty intersections of the domains $B_i$ with the union
$\cup_{k\in\Z_p}\U_k$. So $i^{**}\leq i^*$ and
$$
\diam_N(U_i)\leq \eps,\qquad 1\leq i\leq i^{**}.
$$

Let $\jV_k$ be the region bounded by $\ph^{-N}(\De_k^-(\ka N))$ and $\De_k^-(\ka N)$ (relatively to the direct orientation of $\T$). By the same arguments as in the beginning, one
sees that $\jV_k\subset \B_k$.
Moreover the inverse images:
$$
B_{n,i}=\ph^{-n}(U_i),\qquad 1\leq n\leq N, \ 1\leq i\leq i^{**},
$$
form a covering of the region $\jV=\cup_{k\in\Z_p} \jV_k\ (\subset \B)$ and each of these subsets clearly satisfy $\diam_N(B_{n,i}\leq \eps$,
by construction.

Finally,  remark that for each $k$, the complement $\B_k\setm \jV$ satisfies
$
\ph^n(\B_k\setm \jV)\subset \B_k
$
for $0\leq n\leq N$, and therefore $\diam_N(\B_k\setm \jV)\leq\eps$.

Gathering the previous remarks, one sees that the subsets
$$
(B_i)_{1\leq i\leq i^*},
\quad(B_{n,k})_{1\leq n\leq N,\ 1\leq i\leq i^{**}},\quad
(\B_k\setm \jV)_{1\leq k\leq p},
$$
form a covering of $\jA_{N}$ and have $d_N$--diameter $\leq\eps$, which proves that
$$
G_N(\jA_{N},2\eps)\leq i^*+Ni^{**}+p.
$$
Therefore the ``complexity index'' on the $N$-dependent part $\jA_N$ is at most $1$. We will see that the main source of complexity is located
in its complement.

\vskip2mm

{\bf 1.c. Covering of the annulus $\jA^*_{N}$.}
The main step consists in estimating from above the minimal number of elements in a covering of
thin enough sub-annuli  $S_{q,q'}$ with subsets of $d_N$--diameter less than $\eps$.

\vskip1mm

\begin{lemma}\label{lem:fatten}
 Let $m\geq 1$ be a integer, and let $\eps>0$ be given. There exists positive constants
$c_1$  and $c_2$, depending only on $\eps$,
such that if  the pair $(q,q')\in [q^*,m]^2$ satisfy
$$
0\leq q'-q \leq \Frac{c_1\,\eps}{[m/q]}
$$
then the sub-annulus $S_{qq'}$ satisfies
$$
G_m(S_{qq'},\eps)\leq c_2\, q.
$$
\end{lemma}

\begin{proof}  We will first analyze the dynamics on a single curve $C_q$, and then deduce  from this study an estimate on the covering
number for a thin enough strip $S_{qq'}$.

\vskip2mm

1. Let $q\in [q^*,+\infty[$ be a period of the system, $q\leq m$, and consider the orbit $C_q$, let $\Phi:\R\times\jA\to \jA$ be the flow of $V$. Let $\la$ be the Lipschitz constant of $\Phi$ on the compact set  $[-1,1]\times\jA$. We will take advantage of the tameness property: let $I_q$ be the interval $C_q\cap \K$, where $\K$ is the fundamental domain introduced in definition \ref{def:tameness}. Consider two points $a\leq a'$ contained in
$I_q$. Then by the tameness property the maximum $\mu$ of the separation function $E_{a,a'}$ is achieved for $t$ such that $\ph_t(a)$ and $\ph_t(a')$ are
located inside $I_q$, and therefore $t\in[-1,1]$. As a consequence $\mu\leq \la\, d(a,a')$, and thus the $d_m$--diameter of $[a,a']$ is less than
$\la\, d(a,a')$ for all positive integers $m$.

Now we choose a finite covering of $I_q$ by consecutive subintervals $J_1,\ldots,J_{j^*_q}$ of $d$--diameter $\eps/(2\la)$.
As $I_q,\ph(I_q),\ldots,\ph^{[q]}(I_q)$ is a covering of $C_q$, one sees that the intervals $I_{ij}=\ph^i(J_j)$, ${0\leq i\leq [q], 1\leq j\leq j^*_q}$ form a covering of $C_q$ by subsets of $d_m$--diameter $\leq \eps/2$, for {\em each} integer $m$.

Note finally that the number $j^*_q$ is  bounded above by $j^*_{q^*}$, by the torsion property. Therefore, setting $c_2=2j^*_{q^*}$, for each $q\in[q^*,m[$,  each orbit $C_q$ admits a covering by at most $c_2\,q$ subsets whose  $m$--diameter is $\leq \eps/2$, for each positive integer $m$.

\vskip2mm

2. Now we fix a positive integer $m$. We will use the previous covering of a curve to fatten it a little bit and obtain a covering of a thin strip. Namely, given the initial perioq $q$, we want to find a period $q'\leq q$ such that for any pair of points $a\in C_q$ and $a'\in C_{q'}$ {\em with the same abscissa
$\th$}, the (maximal) difference of the abscissas of any pair of iterates $\ph^n(a)$ and $\ph^n(a')$, $n\in\{0,\ldots,m\}$, is at most $\eps/2$.

Assume that it is the case and consider again the covering of $C_q$ by the intervals $I_{ij}$.
Then, let $R_{ij}$  be the rectangle  limited by the curves $Cq$ and $C_{q'}$ and the vertical lines passing through
the extremities of $I_{ij}$. It is clear that these rectangles form  a covering of the strip $S_{qq'}$  and that the
$d_m$--diameter of each $R_{ij}$ is less than
$\eps$. This covering clearly has at most $c_2\,q$ elements, which is our claim.

So the problem is to analyze the mutual drift of the points $a$ and $a'$ on $C_q$ and $C_{q'}$ in order to choose
$q'$ close enough to $q$.
To this aim,  we fix a lift to the universal covering and consider the associated the lifted flow $\til\ph_s$, together
with lifts $\til a$, $\til a'$ of $a$ and $a'$ located on the same vertical. As usual, we set
$\til a(s)=\til\ph_s(a)=(x(s),r)$, $\til a'(s)=\til\ph_s(a')=(x'(s),r')$, so  $r'\geq r$ since $q'\leq q$. Given $t\geq 0$, let $t'$ be
 the time   needed for the point $a'$ to reach the vertical through $a(t)$, so $t'$ is characterized by the equality
$
x'(t')=x(t).
$
We set
$$
D(a,t)=t-t'
$$
so, by the torsion property,  $D(a,t)\geq0$. Moreover,  one easily checks that
$$
D(a,t_1+t_2)=D(a,t_1)+D(\ph_{t_1}(a),t_2),
$$
therefore $D(a,.)$ is an increasing function, which satisfies $D(a,k q)=k D(a,q)$ for each $a\in C_q$ and each positive integer
$k$. It is also easy to see that
$$
D(a,q)= q-q',\quad  \forall a\in C_q.
$$
Now let $\ell$ be the maximum of the function $\Th$ (see definition \ref{def:model}), that is the maximal length of the vector field $V$.
Then obviously
$$
0\leq x'(t)-x(t)\leq \ell\, D(a,t).
$$
We can now pass to the main estimates.  Fix a positive integer $m$ and, for $a\in C_q$, note that
$$
D(a,m)\leq  D(a,([\frac{m}{q}]+1)q)=([\frac{m}{q}]+1)(q-q').
$$
Consequently, for $0\leq n\leq m$,
$$
0\leq x'(n)-x(n)\leq x'(m)-x(m)\leq \ell \,([\frac{m}{q}]+1)(q-q').
$$
which proves our statement for $c_1=1/\ell$.
\end{proof}

\vskip2mm

We are now in a position to estimate the number  $G_{\ka N}(\jA_N^*,\eps)$ from above, from which we deduce
the estimate of $G_N(\jA_N^*,\eps)\leq G_{\ka N}(\jA_N^*,\eps)$. Set $k^*=[\ka N/q^*]-1$. For $1\leq k\leq k^*$ we introduce
the subset ${\bf S}_k\subset \jA_N$
formed by the curves $C_q$ such that
\begin{equation}\label{eq:defsk}
q\in \Big ]\Frac{\ka N}{(k+1)},\Frac{\ka N}{k}\Big],
\end{equation}
that is ${\bf S}_k=S_{\frac{\ka N}{k},\frac{\ka N}{(k+1)}}$. Clearly, the family $({\bf S}_k)$ covers $\jA_{N}$.

We want to apply lemma \ref{lem:fatten} with $m=\ka N$ and $q$ satisfying (\ref{eq:defsk}), to choose $q'\leq q$ close enough to $q$.
Remark that if $q$ satisfies (\ref{eq:defsk}), then
$$
\Big[\Frac{\ka N}{q}\Big]=k.
$$
Therefore,  by lemma \ref{lem:fatten}, if $q'-q\leq c_1\eps/k$, the strip $S_{qq'}$ satisfies
$$
G_{\ka N}(S_{qq'},\eps)\leq c_2\, q\leq c_2 \, \Frac{\ka N}{k}.
$$
This upper bound is therefore constant on ${\bf S}_k$. Now the strip ${\bf S}_k$ is covered
by the strips $(S_{q_{i+1},q_i})_{0\leq i\leq i^*(k)}$, with
$$
q_i=\Frac{\ka N}{k+1}+ i \Frac{c_1\eps}{k},\qquad i^*(k)=\Big[\Frac{\ka\, N}{c_1\,\eps\,(k+1)}\Big]+1\leq c_3\Frac{\ka\, N}{c_1\,\eps\,k}
$$
for $c_3>0$ large enough.
Therefore
$$
G_{N}({\bf S}_k,\eps)\leq c_2 \, \Frac{\ka N}{k}\,i^*(k)\leq  c_\eps\Frac{N^2}{k^2},\qquad c_\eps=\Frac{c_2c_3\ka^2}{c_1\eps}.
$$
Unsing now the fact that the strips ${\bf S}_k$ cover $\jA_N$, one gets
$$
G_{N}(\jA^*_N,\eps)\leq\sum_{k=1}^{k^*}G_{N}({\bf S}_k,\eps)\leq \sum_{k=1}^\infty c_\eps\Frac{N^2}{k^2}=\al_\eps N^2,
$$
with $\al_\eps=c_\eps\zeta(2)$. Therefore the complexity in $\jA_N^*$ is at most quadratic.
\end{proof}

\vskip1mm

{\bf 1.d. Final estimate.} We only have now to gather the estimates in $\jA_N$ and $\jA^*_N$ to get
$$
G_{N}(\jA,\eps)\leq \al(\eps) N^2+ o(N^2)
$$
which proves that $\L(\ph,\jA)\leq 2$.

\vskip4mm

{\bf 2. Proof of $\L(\ph)\geq 2$.} Given $\eps>0$ and an integer $N$, we want to find an $(N,\eps)$ separated set contained in $\jA$. It turns out that for $N$ large enough one can produce such a separated set contained (for instance) in the strip $S_{N/2,N/3}$ limited by the
curve $C_{N/2}$ from below and by  $C_{N/3}$ from above. This is reminiscent of the boundary layer phenomenon in fluid dynamics.

When $a$ and $b$ are two points on the same curve $C_q$, we denote by
$[a,b]$ the set of all points of $C_q$ located between the points $a$ and $b$, relatively to the direct ordering of $C_q$.

\vskip1mm

1. As above, we begin by the restriction to a single curve $C_q$ contained in $S_{N/2,N/3}$ and we will prove that $C_q$ contains an $(N,\eps)$--separated   subset with $[q/2]$ elements.

Fix a vertical segment $\{\th=\th_0\}$ in $\jA$, let $a_q$ be its intersection point with the curve $C_q$, for $q\in [q^*,+\infty]$,  and assume that
$a_\infty$ is not a singular point of $V$, so $\ph(a_\infty)\neq a_\infty$ and $\ph\inv(a_\infty)\neq a_\infty$. Note that for each $q\geq 3$, due to
the torsion condition,
the projection on $C_\infty$ of the interval $[\ph(a_q),\ph^{[q/2]}(a_q)]$ is contained in $[\ph(a_\infty),\ph\inv(a_\infty)]$. Assume that
$$
\eps<\Min[d(a_\infty, \ph(a_\infty)), d(a_\infty, \ph\inv(a_\infty))].
$$

Let $q$ be fixed, and for $k\geq0$ set $a^{(k)}=\ph^{-k}(a_q)$.
Remark that for
$0\leq k \leq [q/2]$, $0\leq k'\leq [q/2]$ and $k< k'$, the pair $(a^{(k)}, a^{(k')})$ is $(N,\eps)$--separated. Indeed:
$$
d_N\big(\ph^{k'}(a^{(k)}),\ph^{k'}(a_{k'})\big)\geq d(\ph^{k'-k}(a_q),a_q)> \eps,
$$
since $\ph^{k'-k}(a_q)\in[\ph(a_q),\ph^{[q/2]}(a_q)]$.
The set $\{a^{(k)}\mid 1\leq k\leq [q/2]\}$ is therefore $(N,\eps)$--separated, which proves our statement.

\vskip1mm

2. We will now prove that if the periods $q$ and $q'$ are separated enough and chosen  in the interval $[N/3,N/2]$,  with $N\geq 18$,
than any pair of points $\{a,a'\}$ in $C_q$ and $C_{q'}$ respectively, is $(N,\eps)$--separated.

Let us  fix
$$
\eps< \Min \Big(d(\ph\inv(a_{\infty}),a_{\infty}),d(\ph(a_{\infty}),\ph^2(a_{\infty}))\Big).
$$
On the curves $C_q$ and $C_{q'}$, with $q\geq q'$, we introduce the domains
$$
I_q=[a_q,\ph(a_q)[\ \subset C_q,\qquad J_{q'}=[\ph\inv(a_{q'}),\ph^2(a_{q'})[\ \subset C_{q'}.
$$
Thanks to the torsion condition,  the distance between $I_q$ and the complement  $C_{q'}\setm J_{q'}$ is therefore
larger than $\eps$, for each pair $(q,q')$ in $[q^*,+\infty[$.

Now assume that $q$ and $q'$ are contained in the interval $[N/3,N/2]$ and satisfy $q-q'\geq4$.
Consider two points $a\in C_q$ and $a'\in C_{q'}$. Clearly, there exists a unique positive integer $n_0\in \{0,\ldots,q-1\}$ such that
$\ph^{n_0}(a)\in I_q$.

\vskip1mm
\noindent$\bullet$
If $\ph^{n_0}(a')\in C_{q'}\setm J_{q'}$, than $d_{n_0}(a,a')>\eps$ and the pair $\{a,a'\}$ is $(N,\eps)$--separated since $N\geq q\geq n_0$.

\vskip1mm
\noindent$\bullet$
If $\ph^{n_0}(a')\in C_{q'}$, then remark that $\ph^{q+n_0}(a)\in I_q$, by $q$--periodicity.
Moreover, $\ph^{q+n_0}(a')\notin J_{q'}$. To see this,  note that
$$
\ph^{q+n_0}(a')=\ph^{q} (\ph^{n_0}(a'))=\ph^{q-q'} (\ph^{n_0}(a')),
$$
with $q-q'\geq 4$, $q\leq 3/2 q'$, $q'\geq 6$, so clearly $\ph^{q-q'}(J_{q'})\cap J_{q'}=\emptyset$.
Therefore
$$d(\ph^{q+n_0}(a),\ph^{q+n_0}(a'))>\eps$$
and,
 as $q+n_0\leq 2q\leq N$, this proves that $d_N(a,a')>\eps$.

\vskip2mm

We only have now to gather the previous constructions. In the  interval $[N/3,N/2]$, there exist at least
$[N/24]$ distinct integers elements  $(q_i)$ with $q_j-q_i\geq 4$ if $i\neq j$. On each curve $C_{q_i}$, one can find an $(\eps, N)$--separated
subset with $[q/2]\geq [N/6]$ elements, and the union of all these subsets is still $(N,\eps)$--separated.
Therefore the strip limited by the curves $C_{N/3}$ and $C_{N/2}$ contains an $(N,\eps)$--separated subset
with more than $N^2/150$ elements, for $N$ large enough, which proves that $\L(\ph)\geq 2$.


\subsection{Hamiltonian systems on surfaces: proof of theorem \ref{thm:indham}}
We consider a Morse Hamiltonian function $H$ on the compact symplectic surface $(\S,\Om)$, which is non critical and constant on each
component of $\d\S$. For $t\in\R$ we denote by $\phi_t$ the time $t$ diffeomorphism generated by the Hamiltonian vector field $V^H$
defined by $H$, and we write $\phi$ in place of $\phi_1$.

\parag {\bf Sketch of proof.}  We want to compute the complexity index $\L(\phi)$. To this aim, we will
exhibit a suitable covering of $\S$ by domains of the form $CC(H\inv([\al,\be]))$ (where $CC(A)$ stands for  a connected component of the subset $A$)
and compute the index of the restriction of the system
to each of these domains. We will of course choose intervals $[\al,\be]$ such that $]\al,\be[$ contains no critical value, with
either $\al$ or $\beta$ a critical value.

Critical points of index $0$ or $2$ for $H$ are isolated, while critical points of index $1$ belong to {\em polycycles} (recall that a polycycle
in this setting is an embedded quadrivalent graph
whose vertices  are critical points of index $1$ of $H$, and whose edges are heteroclinic or homoclinic orbits of $V^H$).
We refer to \cite{DMT} for basic facts on polycycles and non degenerate Hamiltonian foliations on surfaces.

\vskip1mm

We are thus led to examine three cases for the domains $CC(H\inv([\al,\be])$.

\vskip2mm

$\bullet$ {\bf The regular case.} The case when $[\al,\beta]$ contains no critical value has already been encountered.
Indeed, it is easy to define action-angle coordinates in $\jC=CC(H\inv([\al,\be]))$. The set $\jC$ is diffeomorphic to an annulus and foliated
by invariant circles on which $\phi$ is conjugate  to a rotation, whose angle may depend on the circle.
The index $\L(\phi,\jC)$  is therefore either $0$ if the angle is independent of the circle (and $\phi$ is then
globally conjugate to a  rotation) or $1$ when the angle depends non trivially on the circle at some place,
as proved in \ref{prop:actang}.

\vskip2mm

$\bullet$  {\bf The neighborhood of points of index $0$ or $2$.}
Let $z$ be a critical point of index $0$ of $H$, let $\al=H(z)$ and fix $\be>\al$ such that $]\al,\be]$ contains no critical value.
Set $\jC=CC(H\inv([\al,\be]),z)$ (where $CC(A,z)$ stands for the connected component of $A$ containing $z$),
so $\jC$ is diffeomorphic to a disk.  As above, the set $\jC\setminus \{z\}$ is
foliated by invariant circles on which $\phi$ is conjugate
to a rotation whose angle may depend on the circle.

Fix $\eps>0$. There exists $\de>0$ such that the diameter of $\jC'=CC(H\inv([\al,\al+\de]),z)$ is less than $\eps$. The complement
$\jC''=\jC\setminus\jC'$ is a regular annulus on which action-angle coordinates can be defined. Therefore proposition
\ref{prop:actang} proves the existence of a constant $c_\eps>0$ such that, for $N\geq 1$,
$$
G_N(\phi,\jC'',\eps)\leq c_\eps N.
$$
As a consequence, since $\jC'$ is invariant under the Hamiltonian flow:
$$
G_N(\phi,\jC,\eps)\leq c_\eps N+1,
$$
which proves that $\L(\phi,\jC)\leq 1$. Conversely, one readily sees that $\L(\phi,\jC)=0$ if $\phi$ is conjugate to a rotation
on $\jC$, and that  $\L(\phi,\jC)=1$ if it is not the case.
The study of neighborhoods of points of index 2 is completely analogous.

\vskip2mm

$\bullet$  {\bf The neighborhood of polycycles.} The remaining problem is therefore to investigate the behaviour of the system in the
neighborhood of polycycles. To this aim, we introduce the following definition.

\begin{Def}
Let $\P$ be a polycycle for the system $(\S,\phi)$, let $\al$ be the value of $H$ on $\P$ and fix $\be>\al$ such that
the interval $]\al,\be]$ contains no critical value of $H$. Fix a connected component $\jC$ of $H\inv(]\al,\be])$ such that $\P\cap\ov\jC$
is non empty. A {\em dynamical desingularization} of the system $(\jC,\phi)$ is a tame $p$--model with torsion $(\jA,\ph)$, together with a
homeomorphism $\Psi : \T\times\,]0,1]\mapsto \jC$ such that
$$
\Psi\circ \ph=\phi\circ\Psi
$$
with $\Psi$ and $\Psi\inv$ uniformly continuous.
\end{Def}

Since $\Psi$ is bi-uniformly continuous,
$$
\L(\phi,\jC)=\L(\ph,\T\times\,]0,1]).
$$

We add the condition $\P\cap\ov\jC\neq\emptyset$ to avoid trivial cases (regular components), which may only happen when $\P\neq H\inv(\{\al\})$.
Note also that
the integer $p$ is connected with the number of critical points in $\P\cap\ov\jC$, but is generally not equal to it as we will
see in the next section, where we will prove the following proposition.

\begin{prop}\label{prop:desing}  With the same notation,
there exists $\rho>0$ such that for each connected component $\jC$ of $H\inv(]\al,\al\pm\rho])$
such that $\P\cap\ov\jC$ is nonempty, the system $(\jC,\phi)$ admits a dynamical desingularization.
\end{prop}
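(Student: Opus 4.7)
The plan is to build the desingularization $(\jA, V, \Psi)$ in three steps: a local Morse analysis near each saddle of the polycycle, a global combinatorial assembly using how periodic orbits in $\jC$ shadow $\P$, and a verification of the torsion and tameness conditions. Starting with the local step, I would fix a saddle $s \in \P \cap \ov{\jC}$ and apply a symplectic Morse lemma to obtain local coordinates $(x,y)$ with $H = H(s) + xy$, so that $X^H$ reads $\dot x = x, \dot y = -y$ on a neighborhood $U_s$. Setting $u = (x+y)/2, v = (x-y)/2$, the flow becomes $\dot u = v, \dot v = u$, and each level $\{H = H(s) + \epsilon\}$, restricted to a suitable branch of the hyperbola, is parameterized by $v$ via $u = \sqrt{v^2 + \epsilon}$. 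This gives $\dot v = \sqrt{\epsilon + v^2}$, so the speed of the system in the $v$-coordinate is exactly $\sqrt{\epsilon + v^2}$. Pulled back through the sought conjugacy and expressed in the centered coordinate $\xi = \theta - k/p$, this produces exactly the form $\Theta = \ell_k(r) \sqrt{\rho_k(r) + \xi^2}$ required in condition (C3) of Definition~\ref{def:model}, with $\rho_k(r)$ smooth, monotone increasing, and vanishing at $r = 0$. Conditions (C1) and (C2) then hold because $X^H$ vanishes only at the saddles, which are mapped to the points $o_k$ by construction.

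For the global assembly, I note that for $\rho$ small enough every orbit of $\phi$ in $\jC$ is periodic and closely shadows the $\jC$-side of $\P$. Over one period such an orbit visits a well-defined cyclic sequence $s_{k_1}, \ldots, s_{k_p}$ of saddles of $\P$, with repetitions allowed when both outgoing separatrices of a vertex bordering $\jC$ lie in $\ov{\jC}$. The integer $p$ is defined as the length of this cycle. I would then choose $p$ transverse sections of $X^H$ in $\jC$, one between each pair of consecutive saddle patches, parameterize $\T$ by concatenating the $p$ flow-arcs between consecutive sections with the $i$-th saddle placed at $o_i = (i/p, 0)$, and introduce the radial coordinate $r$ as a smooth strictly increasing function of $H - \alpha$ sending $(0, \rho]$ onto $(0, 1]$. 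The map $\Psi : \T \times (0,1] \to \jC$ is then defined by following the Hamiltonian flow from these sections, and the semi-conjugacy $\Psi \circ \ph = \phi \circ \Psi$ holds by construction. Uniform continuity of $\Psi$ and $\Psi^{-1}$ follows because the angular speed is bounded below by a positive constant outside the saddle patches and is controlled inside them by the Morse normal form computed above.

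The torsion condition of Definition~\ref{def:torsion} then follows from the logarithmic divergence of the period: summing the saddle passage times $\sim \ell_k(r)^{-1}|\log \rho_k(r)|$ over the $p$ saddles shows that $T(r) \to +\infty$ strictly monotonically as $r \to 0$, which translates directly into the required twist of verticals under $\ph_t$ for $t>0$. For tameness, I would take the fundamental domain $\K$ to be the union of the $p$ transverse sections flowed for time $1$, arranged so as to lie outside all the saddle patches $\O_k$. Within a saddle patch, two points on the same orbit first approach the saddle along the stable direction (so their separation contracts) before separating along the unstable direction; consequently the separation function $E_{a,a'}$ attains its maxima only after the orbit exits the patches, that is, inside $\K$ up to a shift in the angular coordinate.

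The principal obstacle is the global coherence required in the assembly step: producing a single smooth radial coordinate $r$ on $\T \times (0,1]$ that simultaneously makes the profiles $\rho_k(r), \ell_k(r)$ smooth, positive, and monotone in the required sense at every saddle $o_k$, while interpolating smoothly along the intermediate transverse arcs, requires a careful matching between the Morse charts at distinct saddles and dictates the precise choice of the sections and of the angular rescaling on each arc. Once this matching is achieved, all the remaining verifications are routine consequences of the local Morse form and the combinatorial choice of $p$.
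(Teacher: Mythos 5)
Your overall architecture (Morse normal form at each saddle, gluing along the connecting arcs, then checking torsion and tameness) is the same as the paper's, and your local step does recover the form (C3). But the two properties that make the glued model usable are exactly the ones you do not actually establish, and your argument for the first is logically incorrect. The torsion condition of Definition \ref{def:torsion} is a pointwise twist statement: for \emph{every} $t>0$ the vertical through $x$ must be strictly sheared, which amounts to the angular speed of the model being monotone in $r$ at every point of the annulus. Monotone (logarithmic) divergence of the total period $T(r)$ does not imply this, so your deduction ``$T(r)\to+\infty$ strictly monotonically $\Rightarrow$ torsion'' is a genuine gap. In the paper this is engineered, not observed: one proves that the saddle transition times $\tau_k$ have $\tau_k'(r)\to-\infty$, hence are decreasing after localizing to $\jC(\ha\rho)$, which makes the local profiles $\chi_k$ increasing (Lemma \ref{lem:conjo}); and on the arcs one \emph{prescribes} decreasing transit times $\sig_k$ and then \emph{defines} the entrance sections $\Sig_{k+1}^+$ by flowing the exit sections for time $\sig_k(r)$ (Lemma \ref{lem:timesect}), so that the arc speeds $w_k$ satisfying (\ref{eq:relfond}) can be chosen increasing in $r$. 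With your ``natural'' sections placed between saddle patches, the arc transit times of the actual Hamiltonian system need not be monotone in $r$, and the pulled-back model can fail torsion. Relatedly, the requirement $\Psi\circ\ph=\phi\circ\Psi$ forces the model's transit times to coincide piecewise with those of $X^H$ (this is the role of $\chi(r)=\al/\sh(\tau(r)/2)$ and of (\ref{eq:relfond})); you only mention this matching as an ``obstacle'' without resolving it, whereas it is where the construction actually lives.

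The tameness verification has the same defect. Your proposed $\K$ (the union of the $p$ sections flowed for time one) is not a fundamental domain in the sense of Definition \ref{def:tameness}, which requires $\Phi([0,1],\De)$ for a \emph{single} vertical segment; and your argument only shows that maxima of the separation $E_{a,a'}$ occur outside the saddle patches, not inside one fixed fundamental domain. Since the time lag between two points on the same orbit is constant, $E_{a,a'}$ is maximized where the angular speed is largest, and without a deliberate choice of the speed profile that location can drift from arc to arc as $r$ varies, violating tameness. The paper forces it by making $w_1$ constant on a subinterval $I\times[0,1]$ of a single arc $R_1$ and strictly dominating all other speeds, including the maxima $\sqrt{(\de^*)^2+\chi_k(\ha\rho)}$ in the saddle patches (formula (\ref{eq:maj})), so that every maximum of $E_{a,a'}$ is trapped in $\K=I\times[0,1]$. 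Without these two pieces of engineering (decreasing prescribed $\sig_k$ with entrance sections built from them, and a dominating constant-speed block inside one arc), the proposition is not proved.
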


Now fix a component $\jC$ as in the proposition above. Since $(\jC,\phi)$ admits a dynamical desingularization which leaves the complexity index invariant,
$\L(\phi,\jC)=2$, by proposition \ref{prop:pmodel}. Moreover, there exists a finite covering of
$$
H\inv([\al-\de,\al+\de])\setminus \P
$$
by such domains $\jC$. As a consequence
$$
\L(\phi,H\inv([\al-\de,\al+\de])\setminus \P)=2.
$$
Finally, one easily proves using proposition \ref{prop:indseg}
that $\L(\phi,\P)=1$. As a consequence
$$
\L(\phi,H\inv([\al-\de,\al+\de]))=2.
$$

Gathering the previous arguments, one sees that there exists a covering of $\S$ by invariant domains $\jD$ such that the
index $\L(\phi,\jD)$ belongs to $\{0,1,2\}$, and that there exists a domain $\jD$ such that $\L(\phi,\jD)=2$ if and only if there
exists a critical point of $H$ with index $1$. This concludes the proof of theorem \ref{thm:indham} if one admits the result of
proposition \ref{prop:desing}.

\parag {\bf Proof of proposition \ref{prop:desing}.} We consider a polycycle $\P$ for the system $(\S,\phi)$ with $H(\P)=\{0\}$. Moreover,
to avoid trivial cases, we assume that $\P=H\inv(\{0\})$ (the general case immediately follows). We fix $\rho_0>0$ such that $]0,\rho_0]$ contain no critical value
of $H$ and we consider a connected component $\jC(\rho_0)$  of  $H\inv(]0,\rho_0])$ (so $\P\cap\ov{\jC(\rho_0)}\neq\emptyset$).
We denote by $Z$  the set of critical points of $H$ which are contained in $\ov \jC(\rho_0)$.  In the following, for $0<\rho<\rho_0$,
we will denote by $\jC(\rho)$ the connected component of $H\inv(]0,\rho])$ which is contained in $\jC(\rho_0)$.

\begin{figure}[h]
\begin{center}
\begin{pspicture}(4cm,6.5cm)
\rput(-1.5,3){
\psccurve(0,0)(1.5,2)(3,0)(4.5,-1)(6,0)(7.5,1)(9,0)(7.5,-1)(6,0)(4.5,2)(3,0)(0,-1)(-1,3)(2,2)(-2,0)
\psarc{->}(0,1.8){1}{70}{140}
\pscircle[fillstyle=solid,fillcolor=black](1.96,1.8){.1}
\rput(2.7,1.9){$z_1=z_8$}
\pscircle[fillstyle=solid,fillcolor=black](-1.26,0.745){.1}
\rput(-1.6,0.9){$z_2$}
\pscircle[fillstyle=solid,fillcolor=black](-.77,-.255){.1}
\rput(-.8,-.55){$z_3$}
\pscircle[fillstyle=solid,fillcolor=black](3,0){.1}
\rput(3,-.4){$z_4$}
\rput(3,.4){$z_7$}
\pscircle[fillstyle=solid,fillcolor=black](6,0){.1}
\rput(6,-.4){$z_5$}
\rput(6,.4){$z_6$}
}

\end{pspicture}
\end{center}
\vskip-2cm
\caption{Polycycle and labelling of the vertices for the outer component}\label{polycycle}
\end{figure}
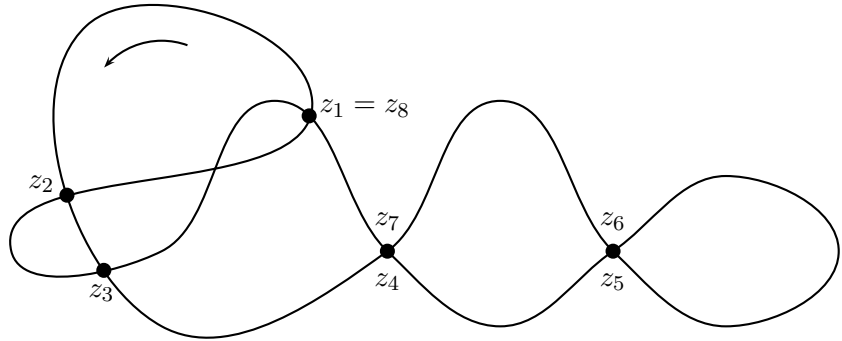

We consider the value of $H$ on $\jC(\rho_0)$ as a global coordinate, that we will denote by $r$, so $r\in\,]0,\rho_0]$.
By classical Morse theory, for $0<r<\rho_0$, the subset $C_r=H\inv(\{r\})\cap \jC(\rho_0)$ is non singular, compact and connected,
so it is a closed orbit of the Hamiltonian  flow. It is therefore oriented according to the vector field $V^H$.
The boundary component $C_0:=H\inv(\{0\})\cap\ov{\jC(\rho_0)}$  is a subpolycycle of $\P$, whose set of vertices we denote by
$Z$.  Note that $Z$
is in general strictly contained in the set of all critical points of $H$ contained in $P$.
When $r\to 0$, $C_r$ accumulates on $C_0$, which allows one to endow $Z$ with a cyclic  labelling
$z_1,\ldots,z_p,z_{p+1}=z_1$ such that  the points $z_j$ and $z_{j+1}$ are linked by a heteroclinic or homoclinic orbit of $\phi$
(see figure~\ref{polycycle}, where  $\jC(\rho_0)$ is assumed to be located inside the ``outer'' component of the complement of $\P$).
Note that the labelling need not be injective.

The conjugacy homeomorphism $\Psi$ we are seaching for will be obtained by gluing together local conjugacies locally defined
 in the neighborhood of the critical points
$z_k$ and along the homoclinic or heteroclinic orbits joining them. The shape of these domains is depicted in figure \ref{fig:conjug}.
In the following, we consider a given
$k$ and we perform the construction of the conjugacies in the neighborhoods of the point $z_k$ (domains $O_k$) and along the orbit
$\Gamma_k$ joining $z_k$ and $z_{k+1}$ (domain $R_k$).

The neighborhood $O_k$ will be limited by suitable entrance and exit sections $\Sigma_k^+$ and $\Sigma_k^-$ (according to the orientation
of the Hamiltonian vector field), a level $H\inv(\{\rho\})$ and the stable and unstable manifolds of $z_k$ (see figure \ref{fig:conjug}). The
domain $R_k$ is the ``intermediate zone'' in $\jC(\rho)$ located between $O_k$ and $O_{k+1}$.
The local conjugacies we will construct will be of the form  $F_k: \O_k\to O_k$ and $G_k:\jR_k\to R_k$, where
\begin{equation}\label{eq:defal1}
\O_k=\{(\th,r)\in\jA\mid \th\in[\frac{k}{p}-\al,\frac{k}{p}+\al]\}.
\end{equation}
and
\begin{equation}\label{eq:defal2}
\jR_k=\{(\th,r)\in\jA\mid \th\in[\frac{k}{p}+\al,\frac{k+1}{p}-\al]\}
\end{equation}
where $\al>0$ is small enough, for instance $\al=1/(8p)$.

\begin{figure}[h]
\begin{center}
\begin{pspicture}(4cm,5.8cm)
\rput(-1.5,3){
\psline[linewidth=0.3mm](0,0)(1,.8)
\psline[linewidth=0.3mm](0,0)(-1,.8)
\psline [linewidth=0.3mm]{->}(0,0)(.5,.4)
\psline [linewidth=0.3mm]{->}(-1,.8)(-.5,.4)
\psline[linewidth=0.3mm](7,0)(8,.8)
\psline[linewidth=0.3mm](7,0)(6,.8)
\psline [linewidth=0.3mm]{->}(7,0)(7.5,.4)
\psline [linewidth=0.3mm]{->}(6,.8)(6.5,.4)
\psline[linewidth=0.3mm](1,.8)(.8,1.05)
\psline[linewidth=0.3mm](8,.8)(7.8,1.05)
\psline[linewidth=0.3mm](6,.8)(6.4,.95)
\psline[linewidth=0.3mm](-1,.8)(-.6,.95)
\pscircle[fillstyle=solid,fillcolor=black](0,0){.05}
\pscircle[fillstyle=solid,fillcolor=black](7,0){.05}
\parametricplot[linewidth=0.3mm] {1}{6}{t  t  -1 add 36 mul sin 1.2 mul .8 add}
\parametricplot[linewidth=0.3mm] {1}{6}{t  t  -1 add 36 mul sin 1.15 mul 1.135 add}
\parametricplot[linewidth=0.3mm] {-1}{1}{t  t t mul  1 add sqrt .8 mul}
\rput(7,0){
\parametricplot[linewidth=0.3mm] {-1}{1}{t  t t mul  1 add sqrt .8 mul}
}
\rput(0,-.3){$z_k$}
\rput(7,-.3){$z_{k+1}$}
\rput(0,1.2){$O_k$}
\psline[linewidth=.2mm]{->}(0,1)(0,.4)
\rput(7,1.2){$O_{k+1}$}
\psline[linewidth=.2mm]{->}(7,1)(7,.4)
\rput(3.5,1.6){$\Ga_k$}
\psline[linewidth=.2mm]{->}(3.7,1.5)(5,1.5)
\rput(3.5,2.8){$R_k$}
\psline[linewidth=.2mm]{->}(3.5,2.6)(3.5,2.1)
\rput(-.7,2){$\Sigma^+_k$}
\psline[linewidth=.2mm]{->}(-.8,1.8)(-.8,.9)
\rput(.97,2){$\Sigma^-_k$}
\psline[linewidth=.2mm]{->}(.9,1.8)(.9,.9)
\rput(6,2.5){$H\inv(\{\rho\})$}
\psline[linewidth=.2mm]{->}(6,2.3)(6,1.15)
}
\end{pspicture}
\end{center}
\vskip-3cm
\caption{The domains of the conjugacies}\label{fig:conjug}
\end{figure}
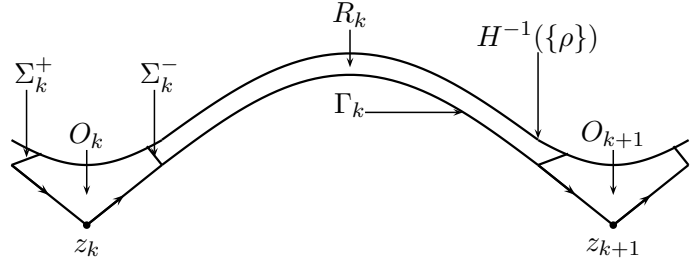

\vskip3mm

Our main task will be to choose the various objects in order for the (global) conjugate system on the annulus $\jA$ to satisfy the
tameness and torsion conditions. We will adopt the following strategy:
\vskip1mm
$\bullet$ construction of a conjugacy to a suitable desingularized system in the neighborhood of $z_k$
and definition of the exit sections $\Sigma_k^-$ for all $k$,
\vskip1mm
$\bullet$ given a suitable function $\sig_k:\Sig_k^-\to\R$, choice of the entrance section $\Sigma_{k+1}^+$ in such
a way that the transition time of the flow between $\Sigma_k^-$ and $\Sigma_{k+1}^+$ coincides with $\sigma_k$,
\vskip1mm
$\bullet$ computation of the transition time $\tau_k$ between the sections $\Sig_k^+$ and $\Sig_k^-$,
\vskip1mm
$\bullet$ construction of the conjugacy  $F_k=\O_k\to O_{k}$ and local torsion condition in $\O_k$,
\vskip1mm
$\bullet$ construction of the conjugacy in $G_k:\jR_k\to R_k$ and local torsion condition in $\jR_k$,
\vskip1mm
$\bullet$ gluing of the conjugacies for all $k$ and verification of torsion and tameness properties for the conjugate system.

\vskip3mm

{\bf 1. Desingularization in the neighborhood of the critical points and choice of the exit sections.}
The first result is a ``straightening lemma''  for orbits in sectors such as $O_k$  in figure \ref{fig:conjug}.
Very similar results have been proved by Eliasson \cite{E84} in the more difficult case of multidimensional systems
but here we will deduce it from the well-known following symplectic Morse lemma (\cite{CV}) in the plane.

\begin{lemma}
Let $z$ be a critical point of $H$ with index $1$, with $H(z)=0$. Then there exists a local symplectic diffeomorphism $\eta :(\R^2,0)\to(\S,z)$
such that $H\circ \eta (u,v)=h(v^2-u^2)$, where $h:(\R,0)\to(\R,0)$ is an increasing $C^\infty$ local diffeomorphism.
\end{lemma}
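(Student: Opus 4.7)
The plan is to combine Darboux's theorem, the ordinary Morse lemma, and a Moser deformation with a constraint. By Darboux, I choose local coordinates centered at $z$ in which $\Om$ becomes $du\wedge dv$; in these coordinates $H$ retains a nondegenerate critical point of index $1$ at $0$. Next, the classical (non-symplectic) Morse lemma yields a local diffeomorphism $\psi$ of $(\R^2,0)$ with $H\circ\psi(u,v)=Q(u,v):=v^2-u^2$. Setting $\omega:=\psi^*\Om$, one obtains a symplectic form on a neighborhood of $0$ which generally differs from $du\wedge dv$.

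The key reduction is then the following: it suffices to construct a local diffeomorphism $\phi$ of $(\R^2,0)$ with $\phi^*\omega=du\wedge dv$ which maps each level set of $Q$ into another level set of $Q$. Indeed, $\eta:=\psi\circ\phi$ will satisfy $\eta^*\Om=\phi^*\omega=du\wedge dv$, and $Q\circ\phi$ will be a function of $Q$, say $Q\circ\phi=h(Q)$, so that $H\circ\eta=h(Q)$. Nondegeneracy of the Hessian of $H$ at $z$ forces $h'(0)\neq 0$, and post-composing if necessary with the symplectic involution $(u,v)\mapsto(v,-u)$ (which replaces $Q$ by $-Q$) allows us to assume $h'(0)>0$; hence $h$ is an increasing $C^\infty$ local diffeomorphism.

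I will construct $\phi$ using Moser's trick: set $\omega_t:=(1-t)\,du\wedge dv+t\,\omega$, which is symplectic near $0$ for every $t\in[0,1]$, and look for $\phi=\phi_1$ as the time-one map of a time-dependent vector field $X_t$ defined by $i_{X_t}\omega_t=-\al$, where $\al$ is a primitive of $\omega-du\wedge dv$; a standard computation then gives $\phi_t^*\omega_t=du\wedge dv$. The leaf-preservation constraint asks that $X_t\cdot Q$ be a function of $Q$ alone. Since $\al$ is defined only up to an exact form $dg_t$ and changing $g_t$ shifts $X_t\cdot Q$ by the Poisson bracket $\{Q,g_t\}_{\omega_t}$, the problem reduces to solving, for every $t$, a cohomological equation of the form $\{Q,g_t\}_{\omega_t}\equiv R_t$ modulo functions of $Q$, smoothly in $(u,v,t)$ near $0$. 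This is the main obstacle; in dimension two its local solvability near a hyperbolic saddle is classical and is essentially the content of Colin de Verdi\`ere's treatment invoked in \cite{CV}. One solves it by integrating along the hyperbolic flow of $Q$, using the smooth coordinate splitting provided by the stable and unstable manifolds of the saddle at the origin.
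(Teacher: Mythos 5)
The paper does not actually prove this statement: it is invoked directly as the well-known isochore (symplectic) Morse lemma of Colin de Verdi\`ere and Vey, with a bare citation to \cite{CV}. What you propose is, in outline, the standard proof of that cited result, so you are doing strictly more than the paper does, while still resting on the same source for the crux. Your reduction is sound: Darboux plus the ordinary Morse lemma reduce the problem to comparing two area forms along the fixed function $Q=v^2-u^2$; the Moser path with the primitive corrected by a Hamiltonian term gives a diffeomorphism respecting the levels of $Q$; and if $X_t\cdot Q=a_t(Q)$ with $a_t$ smooth, the identity $\frac{d}{dt}\big(Q\circ\phi_t\big)=a_t\big(Q\circ\phi_t\big)$ really does produce a single smooth $h$ with $Q\circ\phi_1=h(Q)$ and $h'(0)\neq 0$; composing with the rotation $(u,v)\mapsto(v,-u)$ (which, as an aside, has order four and is not an involution) makes $h$ increasing. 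Two caveats, one minor and one substantive. Minor: the path $\omega_t=(1-t)\,du\wedge dv+t\,\psi^*\Omega$ is nondegenerate for all $t$ only if $\psi$ preserves orientation, which the Morse lemma does not guarantee; fix this by composing $\psi$ with $(u,v)\mapsto(-u,v)$, which preserves $Q$ and reverses orientation, and choose the primitive $\al$ vanishing at the origin so the flow fixes $0$ and exists up to time one. Substantive: your last sentence (``one solves it by integrating along the hyperbolic flow'') understates the real difficulty. For a hyperbolic $Q$ the orbits are non-compact, naive integration along them diverges logarithmically at the separatrices, the Poisson bracket is taken with respect to the $t$-dependent form $\omega_t$, and in the $C^\infty$ category the flow-invariant functions are functions of $Q$ only up to flat corrections distributed over the four quadrants; producing $g_t$ and $a_t$ smooth, and smooth in $t$, is precisely the delicate division lemma that constitutes the core of \cite{CV}. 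So your argument should be read as a correct and informative reduction of the lemma to the key analytic statement of \cite{CV} --- which is exactly the reference the paper itself leans on --- rather than as an independent proof; what it buys is an explicit account of where $h$ comes from and why it can be taken increasing.
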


The function $h$ is not unique in general, but it will cause no trouble in the following.
In the following, for $\de>0$ and $\rho>0$,  we denote by  $D(\de,\rho)$ the rectangle $[-\de,\de]\times [0,\rho]\subset \R^2$,
while $B(\de)$ stands for the radius $\de$ ball  in $\R^2$ centered at the origin, for the Sup norm. Let us now state our first result.

\begin{lemma} Let $z$ be a critical point of $H$ contained in $Z$.
Then, for $\rho^*>0$ and $\de^*>0$ small enough, for all $\rho\in\,]0,\rho^*[$ and $\de\in\,]0,\de^*[$,
there exists a local homeomorphism
$\xi: (D(\de,\rho),0) \to (\ov\jC(\rho),z)$, smooth on $D(\de,\rho)\setminus\{0\}$,  such that $H\circ\xi(u,r)=r$ and
\begin{equation}\label{eq:normform}
\xi^*V^H(u,r) = \ell(r) \sqrt{u^2+\mu(r)}\,\Dron{}{u},\qquad \forall (u,r)\in D(\de,\rho)\setm\{0\},
\end{equation}
where $\ell$ and $\mu$ are smooth functions on $[0,\rho]$, with $\mu(0)=0$, $\ell > 0$, and $\mu$ increasing.
\end{lemma}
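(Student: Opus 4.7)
The plan is to reduce the statement to the symplectic Morse normal form supplied by the preceding lemma, and then to parameterize each level $\{H=r\}$ explicitly by its first Morse coordinate, collecting the parameterizations into the desired $\xi$.

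First I apply the symplectic Morse lemma to obtain a local symplectic chart $\eta:(\R^2,0)\to(\S,z)$ with $H\circ\eta(u,v)=h(v^2-u^2)$, $h$ an increasing smooth diffeomorphism near $0$ with $h(0)=0$. Since $\eta$ is symplectic (with $\Om$ pulled back to $du\wedge dv$), the Hamiltonian vector field reads
\[
\eta^*V^H(u,v)=-2v\,h'(v^2-u^2)\,\d_u-2u\,h'(v^2-u^2)\,\d_v.
\]
The locus $\{H>0\}$ splits near $0$ into the two sectors $\{v>|u|\}$ and $\{-v>|u|\}$; the connected component $\jC(\rho_0)$ selects one of them. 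After possibly composing $\eta$ with $(u,v)\mapsto(u,-v)$ I can assume that this sector is $v>|u|$, and after also replacing $u$ by $-u$ if necessary, I can arrange that in this sector the flow moves in the direction of increasing $u$ (this second change does not affect the fact that $\eta$ is symplectic up to sign, which is harmless for the normal form).

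Next, set $\mu=h\inv$, which is smooth on a neighborhood of $0$, vanishes at $0$ and is increasing, with $\mu'(0)=1/h'(0)>0$. On the level $\{H\circ\eta=r\}$ in the chosen sector, the equation $v^2-u^2=\mu(r)$ gives the global parameterization $v=\sqrt{u^2+\mu(r)}$. I then define
\[
\xi(u,r)=\eta\bigl(u,\sqrt{u^2+\mu(r)}\bigr),\qquad (u,r)\in D(\de,\rho),
\]
for $\de,\rho$ small enough that the argument of $\eta$ stays in its chart. By construction $H\circ\xi\equiv r$, and $\xi$ is a continuous bijection onto a neighborhood of $z$ in $\ov{\jC(\rho)}$, its inverse being the continuous map $q\mapsto (u(\eta\inv(q)),H(q))$. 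Smoothness off the origin comes from the observation that $u^2+\mu(r)>0$ as soon as $(u,r)\neq(0,0)$, so the radicand is a smooth positive function of $(u,r)$ on $D(\de,\rho)\setm\{0\}$.

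Finally, because $r=H\circ\xi$ is constant along the flow, $\xi^*V^H$ is tangent to the horizontal foliation of $D(\de,\rho)$ and hence has the form $A(u,r)\d_u$. The coefficient is read off by applying $du$: on $\{H=r\}$ one has $h'(v^2-u^2)=h'(\mu(r))=1/\mu'(r)$, so
\[
A(u,r)=du(V^H)\big|_{\xi(u,r)}=\pm\frac{2}{\mu'(r)}\sqrt{u^2+\mu(r)},
\]
the sign being fixed to be $+$ by the orientation choice made at the end of the first paragraph. Setting $\ell(r)=2/\mu'(r)>0$, smooth on $[0,\rho^*]$, yields the normal form (\ref{eq:normform}). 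The proof is essentially mechanical once the Morse lemma is in hand; the only points that deserve care are the orientation bookkeeping needed to force $\ell>0$ and the verification that $\xi$ remains a smooth homeomorphism up to the boundary segments $\{r=0\}\setm\{0\}$ and $\{u=0\}\setm\{0\}$, which both follow from $u^2+\mu(r)>0$ away from the origin.
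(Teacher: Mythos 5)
Your proof is correct and follows essentially the same route as the paper: apply the symplectic Morse lemma, restrict to the sector of $\{H>0\}$ adjacent to $z$ inside $\jC$, and use $(u,r)$ with $v=\sqrt{u^2+\mu(r)}$, $\mu=h\inv$, as the desingularizing chart, reading off the normal form from the symplectic character of the Morse chart. The only difference is presentational -- you write the map as an explicit graph parameterization and carry out the vector-field and orientation computation that the paper leaves as "one readily checks", which is fine.
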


\begin{proof}
By the symplectic Morse lemma,  there exists $\de^*>0$ and a smooth local symplectic diffeomorphism
$\eta:(B(\de^*),0)\to(\S, z)$ such that and $H\circ \eta (u,v)=h(v^2-u^2)$, where $h:(\R,0)\to(\R,0)$ is a smooth local
increasing diffeomorphism. One easily checks that for $\de^*$ and $\rho^*$ small enough,
 in the coordinates $(u,v)$, the domain
$\eta\inv (\jC(\rho^*))$ has the form
$$
\abs{u}\leq\de^*,\quad \abs{v}\leq\de^*, \quad 0 <v^2-u^2 \leq h\inv(\rho^*).
$$

\begin{figure}[h]
\begin{center}
\begin{pspicture}(4cm,4cm)
\psset{unit=.8cm}
\rput(3,3){
\psline{->}(-2,0)(2,0)
\psline{->}(0,-2)(0,2)
\psline(1.1,1.5)(1.5,1.5)
\psline(-1.1,1.5)(-1.5,1.5)
\psline(1.1,-1.5)(1.5,-1.5)
\psline(-1.1,-1.5)(-1.5,-1.5)
\psline[linewidth=0.3mm](-1.5,-1.5)(1.5,1.5)
\psline[linewidth=0.3mm](1.5,-1.5)(-1.5,1.5)
\pscircle[fillstyle=solid,fillcolor=black](0,0){.08}
\parametricplot[linewidth=0.2mm] {-1.4}{1.4}{t  t t mul  .25 add sqrt}
\parametricplot[linewidth=0.2mm] {-1.3}{1.3}{t  t t mul  .5 add sqrt}
\parametricplot[linewidth=0.2mm] {-1.2}{1.2}{t  t t mul  .75 add sqrt}
\parametricplot[linewidth=0.3mm] {-1.1}{1.1}{t  t t mul  1 add sqrt}
\parametricplot[linewidth=0.2mm] {-1.4}{1.4}{t  t t mul  .25 add sqrt -1 mul}
\parametricplot[linewidth=0.2mm] {-1.3}{1.3}{t  t t mul  .5 add sqrt -1 mul}
\parametricplot[linewidth=0.2mm] {-1.2}{1.2}{t  t t mul  .75 add sqrt -1 mul}
\parametricplot[linewidth=0.3mm] {-1.1}{1.1}{t  t t mul  1 add sqrt -1 mul}
\rput(2,-.3){$u$}
\rput(-.3,2){$v$}
\rput(-2,.6){$\eta\inv(\jC(\rho^*))$}
}
\end{pspicture}
\end{center}
\vskip-1.5cm
\caption{The $(u,v)$-coordinates and the zone $\eta\inv(\jC(\rho^*))$.}
\end{figure}

To desingularize the situation we select only the part of $\eta(B(\rho^*))\cap\jC(\rho^*)$ corresponding to the orbits of the vector field which
have positive $\dot u$ in the $(u,v)$--coordinates, that is the zone of equation $v\geq 0$ in the same coordinates.
We denote by $\jD(\de^*,\rho^*)\subset \jC(\rho^*)$ this domain, whose equation in the
$(u,v)$ coordinates reads
$$
\abs{u}\leq\de^*,\quad 0\leq v\leq\de^*, \quad 0< v^2-u^2 \leq \rho^*.
$$

\begin{figure}[h]
\begin{center}
\begin{pspicture}(4cm,4cm)
\psset{unit=.8cm}
\rput(-2,3){
\psline{->}(-2,0)(2,0)
\psline{->}(0,-.6)(0,2)
\psline(1.1,1.5)(1.5,1.5)
\psline(-1.1,1.5)(-1.5,1.5)
\psline[linewidth=0.3mm](0,0)(1.5,1.5)
\psline[linewidth=0.3mm](0,0)(-1.5,1.5)
\pscircle[fillstyle=solid,fillcolor=black](0,0){.08}
\parametricplot[linewidth=0.2mm] {-1.4}{1.4}{t  t t mul  .25 add sqrt}
\parametricplot[linewidth=0.2mm] {-1.3}{1.3}{t  t t mul  .5 add sqrt}
\parametricplot[linewidth=0.2mm] {-1.2}{1.2}{t  t t mul  .75 add sqrt}
\parametricplot[linewidth=0.3mm] {-1.1}{1.1}{t  t t mul  1 add sqrt}
\rput(2,-.3){$u$}
\rput(-.3,2){$v$}
\rput(-2.5,.6){$\eta\inv(\jD(\de^*,\rho^*))$}
\psline{->}(3.7,.7)(5.7,.7)
\rput(4.7,1.2){$\eta$}
}
\rput(7,3){
\psline(1.1,1.5)(1.5,1.5)
\psline(-1.1,1.5)(-1.5,1.5)
\psline[linewidth=0.3mm](0,0)(1.5,1.5)
\psline[linewidth=0.3mm](0,0)(-1.5,1.5)
\pscircle[fillstyle=solid,fillcolor=black](0,0){.08}
\parametricplot[linewidth=0.2mm] {-1.4}{1.4}{t  t t mul  .25 add sqrt}
\parametricplot[linewidth=0.2mm] {-1.3}{1.3}{t  t t mul  .5 add sqrt}
\parametricplot[linewidth=0.2mm] {-1.2}{1.2}{t  t t mul  .75 add sqrt}
\parametricplot[linewidth=0.3mm] {-1.1}{1.1}{t  t t mul  1 add sqrt}
\rput(2.5,.6){$\jD(\de^*,\rho^*)$}
\rput(0,-.3){$z$}
}
\end{pspicture}
\end{center}
\vskip-2.5cm
\caption{The domains $\eta\inv(\jD(\de^*,\rho^*))$ and $\jD(\de^*,\rho^*)\subset \S$.}
\end{figure}

In the domain $\ov{\jD(\de^*,\rho^*)}$ one can then choose the pair
$$
(u,r:=h(v^2-u^2))
$$
as continuous local coordinates, this way the coordinate $r$ coincides with the value of the Hamiltonian $H$.
These (non symplectic) new coordinates $(u,r)$ define a homeomorphism $\xi$ from the rectangle
$D(\de^*,\rho^*)$ onto  $\ov{\jD(\de^*,\rho^*)}$, which sends $0$ on $z$.  Note that $\xi$ is smooth on $D(\de^*,\rho^*)\setm\{0\}$.

Using the symplectic character of $\eta$, one readily checks that in the coordinates $(u,r)$ the Hamiltonian vector field takes the required form
$$
\dot u = \ell(r) \sqrt{u^2+\mu(r)},\qquad \dot r=0,
$$
with $\mu=h\inv$ and $\ell=h'\circ \mu$. Note that $\mu$ is increasing, since $h$ is, and $\ell>0$.

It is now easy to check that the construction still remain valid on the subdomains $D(\de,\rho)$ for $\de\in\,]0,\de^*]$ and
$\rho\in\,]0,\rho^*]$, by considering the restrictions of the previous maps.
\end{proof}

In the following we will assume that the constants $\de^*$ and $\rho^*$ are uniformly chosen, so that the previous construction
and result are valid for each critical point $z_k\in Z$.
For each $k$, we therefore have at our disposal a ``sectorial neighborhood''
$\ov{\jD_k(\de^*,\rho^*)}$ equipped with a coordinate system $(u,r)\in D(\de^*,\rho^*)$ and a homeomorphism
$\xi_k:D(\de^*,\rho^*)\to \ov{\jD_k(\de^*,\rho^*)}$. We assume that $\de^*$ is so small that two distinct domains
$\ov{\jD_k(\de^*,\rho^*)}$ and $\ov{\jD_{k'}(\de^*,\rho^*)}$ have empty intersection. This is the only constraint we will
have to impose on $\de^*$, while several new ones for $\rho^*$ will be introduced in the following.

\vskip2mm

{\bf Exit sections.} We define the exit section $\Sigma^-_k\subset\ov{\jD_k(\de^*,\rho^*)}\subset \S$ as the image by $\xi_k$ of
the vertical of  $D(\de^*,\rho^*)$ of equation
$$
u=\de^*/2.
$$
This is indeed a section of the Hamiltonian flow, since the vertical is a section in
the coordinates $(u,r)$ (see figure \ref{fig:section}). We denote by $a_k$ the point $\xi_k(\de^*/2,0)$, that is the intersection
of $\Sigma_k^-$ with the
unstable manifold of $z_k$.

\begin{figure}[h]
\begin{center}
\begin{pspicture}(4cm,4.5cm)
\psset{unit=.8cm}
\rput(-1.5,3){
\pscircle[fillstyle=solid,fillcolor=black](0,0){.08}
\psline{->}(-2,0)(2,0)
\psline{->}(0,-1)(0,2)
\psline[linewidth=0.2mm] (-1.5,.25)(1.5,.25)
\psline[linewidth=0.2mm] (-1.5,.5)(1.5,.5)
\psline[linewidth=0.2mm] (-1.5,.75)(1.5,.75)
\psline[linewidth=0.5mm] (.75,0)(.75,1)
\pspolygon[linewidth=.03](-1.5,0)(1.5,0)(1.5,1)(-1.5,1)
\rput(2,-.3){$u$}
\rput(-.3,2){$r$}
\rput(-1.8,1.5){$D(\de^*,\rho^*)$}
\psline{->}(2.5,.6)(4.7,.6)
\rput(3.55,1){$\xi_k$}
}
\rput(5.6,3){
\psline(1.1,1.5)(1.5,1.5)
\psline(-1.1,1.5)(-1.5,1.5)
\psline[linewidth=0.3mm](0,0)(1.5,1.5)
\psline[linewidth=0.3mm](0,0)(-1.5,1.5)
\pscircle[fillstyle=solid,fillcolor=black](0,0){.08}
\parametricplot[linewidth=0.2mm] {-1.4}{1.4}{t  t t mul  .25 add sqrt}
\parametricplot[linewidth=0.2mm] {-1.3}{1.3}{t  t t mul  .5 add sqrt}
\parametricplot[linewidth=0.2mm] {-1.2}{1.2}{t  t t mul  .75 add sqrt}
\parametricplot[linewidth=0.3mm] {-1.1}{1.1}{t  t t mul  1 add sqrt}
\psline[linewidth=0.1mm]{<-}(.6,.8)(2.2,.7)
\psline[linewidth=0.4mm](.75,.7)(.2,1.02)
\rput(3,1.6){$\jD_k(\de^*,\rho^*)$}
\rput(2.5,.7){$\Sigma_k^-$}
\rput(-.2,-.3){$z_k$}
\rput(.85,.4){$a_k$}
}
\end{pspicture}
\end{center}
\vskip-2cm
\caption{The $(u,r)$-coordinates and the exit section.}\label{fig:section}
\end{figure}
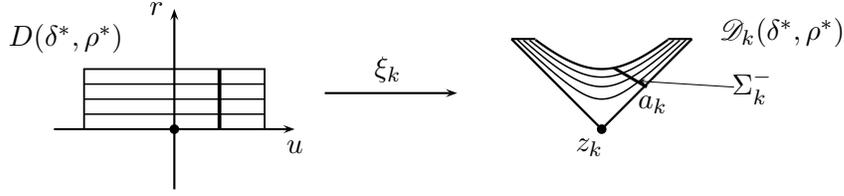

\vskip3mm
{\bf 2. Choice of the entrance sections.} We now choose a set of  functions $(\sig_k)_{1\leq k\leq p}:\Sigma_k^-\to\R$, with positive values,  which will be realized as the transition
times between the sections $\Sigma_k^-$ and $\Sigma_{k+1}^+$ for $1\leq k\leq p$ (as usual $p+1=1$). These functions naturally depend on the coordinate $r$ (which is a coordinate on any section),
so $\sigma_k:[0,\rho^*]\to\R^+$, and they have to be subjected to suitable conditions in order for the final torsion and tameness conditions to be fullfilled.

\vskip2mm

First note that there obviously exists a positive time $T$ such that  for each $k$,
$$
\phi_T(a_k)\in \jD_{k+1}(\de^*,\rho^*).
$$
{\bf Condition 1.}
{\em We assume that, for all $k$,  $\sig_k$ satisfies $\sig_k(0)\geq T$ and $\sig_k(r) >\frac{1}{p}-2\al$ for $r\in[0,\rho^*]$.}

\vskip2mm

The following condition will enable us to prove to the torsion property in each $R_k$.

\vskip2mm

\noindent {\bf Condition 2.} {\em We assume that, for each $k$,  $\sig_k$  is a decreasing function of $r$.}

\vskip2mm

The functions $\sig_k$ being so chosen, we can now define the entrance sections. Recall that we denote
by $\phi_t$ the time--$t$ diffeomorphism associated with the Hamiltonian flow.

\begin{lemma} \label{lem:timesect}
For each $k\in\{1,\ldots, p\}$, let  $a_k(r)$ be the point of coordinate  $r$  on $\Sig_k^-$, that is
$a_k(r)=\xi_k(\de^*/2,r)$.
Then there exists $\bar \rho<\rho^*$ such that the curve
$$
\Sig_{k+1}^-=\{\phi_{\sig_k(r)}(a_k(r))\mid r\in[0,\bar \rho]\}
$$
is a transverse section contained in the normal form domain $O_{k+1}$.  Moreover, obviously,
the transition time between $\Sig_k^-$ and $\Sig_{k+1}^+$, expressed as a function of $r\in[0,\bar\rho]$,
coincides with $\sig_k$.
\end{lemma}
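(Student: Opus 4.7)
The key observation is that the Hamiltonian flow preserves $H$, so the parametrization $r\mapsto\phi_{\sig_k(r)}(a_k(r))$ automatically yields a curve on which $H$ takes the value $r$. Transversality will then come for free, and the only real task is to arrange that the curve stays inside the normal form domain $O_{k+1}$ for $r$ small enough.

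First I would treat the endpoint $r=0$. The point $a_k(0)=a_k$ lies on the unstable manifold of $z_k$, whose orbit under $\phi_t$ is a heteroclinic trajectory approaching $z_{k+1}$. By the choice of $T$ made just before Condition~1, $\phi_T(a_k)\in\jD_{k+1}(\de^*,\rho^*)$, and along this heteroclinic the normal form (\ref{eq:normform}) for $z_{k+1}$ at $r=0$ reduces to $\dot u=\ell(0)\,|u|$ with $u<0$, so $u$ increases monotonically towards $0$ and the orbit remains in the sector $\jD_{k+1}(\de^*,\rho^*)$ for all subsequent times. By Condition~1, $\sig_k(0)\geq T$, hence $\phi_{\sig_k(0)}(a_k)\in\jD_{k+1}(\de^*,\rho^*)\subset O_{k+1}$. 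Next I would extend this to $r>0$ by a straightforward continuity argument: since $a_k(r)$, the flow $\phi_t$, and $\sig_k$ are all continuous in their arguments (and smooth for $r>0$), the map $r\mapsto\phi_{\sig_k(r)}(a_k(r))$ is continuous on $[0,\rho^*]$ and smooth on $\,]0,\rho^*]$. Because its value at $r=0$ lies in the open set $O_{k+1}$, there exists $\bar\rho>0$ such that $\phi_{\sig_k(r)}(a_k(r))\in O_{k+1}$ for all $r\in[0,\bar\rho]$.

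For transversality, the identity $H\circ\phi_t=H$ yields $H(\phi_{\sig_k(r)}(a_k(r)))=H(a_k(r))=r$, so the restriction of $H$ to the image curve equals the parameter $r$ itself and is therefore an immersion. In dimension two the orbits of $X^H$ coincide with the connected components of the regular level sets of $H$, so transversality to these level sets is exactly transversality to the flow; thus $\Sig_{k+1}^+$ is a genuine local section. The transition time assertion is immediate from the construction. The only subtlety I expect to need care is the behavior at $r=0$: the relevant orbit is heteroclinic rather than periodic, and Condition~1 ($\sig_k(0)\geq T$) is exactly what ensures that when we stop the flow we have already entered the normal form neighborhood of $z_{k+1}$, which is what allows the continuity argument to close.
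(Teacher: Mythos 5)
Your overall strategy is the paper's: settle $r=0$ using Condition~1 and the choice of $T$, then pass to small $r>0$ by continuity; and your transversality remark (the restriction of $H$ to the curve is the parameter $r$ itself, so the curve is nowhere tangent to the level sets, hence to the flow) is correct and is presumably why the paper does not even comment on that point. The gap is in the one sentence that carries the containment for $r>0$: ``its value at $r=0$ lies in the open set $O_{k+1}$''. The point $P_0=\phi_{\sig_k(0)}(a_k(0))$ lies on the heteroclinic orbit, at the level $H=0$, and that orbit is part of the \emph{boundary} of $O_{k+1}$: the sectorial domain $\jD_{k+1}(\de^*,\rho^*)$ consists of points at levels $0<H\le\rho^*$, so neither it nor $O_{k+1}$ is a neighborhood of $P_0$ in $\S$ (every neighborhood of $P_0$ meets $\{H<0\}$). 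Consequently ``continuity plus openness'' does not, as written, give the conclusion: as far as your justification goes, nothing rules out that for small $r>0$ the point $\phi_{\sig_k(r)}(a_k(r))$, which lies at the positive level $r$, sits outside the normal-form patch or already beyond the exit section $\Sig_{k+1}^-$.

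The step is repairable, and this is exactly where the paper puts its (short) argument. Either supply the local geometry: in the Morse coordinates $(u,v)$ at $z_{k+1}$, $P_0$ lies in the relative interior of the separatrix edge $\{v=-u,\ -\de^*<u<0\}$, and near such a point the set $\{0<H\le\rho^*\}$ coincides with the one-sided region $\{v>-u\}$, which is contained in $\jD_{k+1}(\de^*,\rho^*)$ with $u$-coordinate bounded away from $\de^*/2$; with this, your continuity argument closes. Or argue as the paper does: continuity of the flow shows the point has entered the normal-form neighborhood for $r$ small, while the transition time from $a_k(r)$ to the exit section $\Sig_{k+1}^-$ tends to $+\infty$ as $r\to 0$ and $\sig_k(r)\le\sig_k(0)$ stays bounded, so the point cannot have overshot $\Sig_{k+1}^-$. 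In either version you should also record that the continuity of $r\mapsto\phi_{\sig_k(r)}(a_k(r))$ uses continuity of $\sig_k$, and finish by taking $\bar\rho=\Min_{1\leq k\leq p}\bar\rho_k$ to get one constant valid for all $k$, as in the paper.
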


\begin{proof}  First, since $a_k(0)=a_k$, notice  that $\phi_{\sig_k(0)}(a_k(0))\in O_{k+1}$ since $\sig_k(0)\geq T$ (condition~1)
and since $\phi_{\sig_k(0)}(a_k)$ on the stable manifold of  $z_{k+1}$. Now, due to the presence of the fixed point $z_{k+1}$,
 the time needed for the point $a_k(r)$, $r>0$, to reach the exit section $\Sigma_{k+1}^-$ tends to $+\infty$ when $r$ tends to $0$.
 So, by continuity of the flow,
there exists $\bar\rho_k$ such that
$$
\{\phi_{\sig_k(r)}(a_k(r))\mid r\in[0,\bar \rho_k]\}\subset O_{k+1}.
$$
It suffices now to choose $\bar\rho=\Min_{1\leq k\leq p}\bar\rho_k$.
\end{proof}

In the following we localize our constructions to the domain $\jC(\bar\rho)$, but we still denote by $\Sig_k^+$ the entrance
sections in this domain (which are the intersections of the previous ones with $\jC(\bar\rho)$).  We will always use this convention
in the following.

\vskip3mm

{\bf 3. Estimate of the transition time between $\Sig_k^+$ and $\Sig_k^-$.}
The following easy lemma provides us with the necessary estimate for the transition time between two {\em arbitrary} sections
inside the conjugacy neighborhood $O_k$.

\begin{lemma}\label{lem:sect} Consider the rectangle $D(\de,\rho)$ endowed with the vector field
$$
X=\ell(r) \sqrt{u^2+\mu(r)}\,\Dron{}{u},
$$
with $\ell$ and $\mu$ smooth, $\ell>0$, $\mu(0)=0$ and $\mu$ increasing.
Let $\De_0$ and $\De_1$ be the local transverse sections of equations $u=u_0$ and $u=u_1$
in $D(\de,\rho)$, with $u_0<0$ and $u_1>0$.
Then the transition time $\tau$ between  $\De_0$ and $\De_1$ reads
\begin{equation}\label{transtime}
\tau(r)=\Frac{1}{\ell(r)}\Big({\rm Argsh\,}\Frac{u_1}{\sqrt{\mu(r)}}-{\rm Argsh\,} \Frac{u_0}{\sqrt{\mu(r)}}\Big).
\end{equation}
\end{lemma}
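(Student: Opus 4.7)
The plan is to directly integrate the ordinary differential equation obtained by restricting $X$ to a level set $\{r=\mathrm{const}\}$. Since $\dot r = 0$ along the flow, for each fixed $r\in[0,\rho]$ the dynamics reduces to the scalar equation
$$
\dot u = \ell(r)\sqrt{u^2+\mu(r)},
$$
which is separable. So the transition time from $\Delta_0$ to $\Delta_1$ at level $r$ is
$$
\tau(r)=\int_{u_0}^{u_1}\frac{du}{\ell(r)\sqrt{u^2+\mu(r)}}=\frac{1}{\ell(r)}\int_{u_0}^{u_1}\frac{du}{\sqrt{u^2+\mu(r)}}.
$$

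The next step is to apply the elementary primitive
$$
\int\frac{du}{\sqrt{u^2+a^2}}={\rm Argsh\,}\!\Big(\frac{u}{a}\Big)+C,
$$
valid for $a>0$, with $a=\sqrt{\mu(r)}$ (which is strictly positive for $r>0$, since $\mu$ is increasing and vanishes only at $0$). Evaluating at the bounds $u_0$ and $u_1$ immediately yields formula (\ref{transtime}).

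The only point requiring a brief comment is the boundary behaviour at $r=0$, where $\mu(0)=0$, so the integrand degenerates to $1/(\ell(0)\abs{u})$, which is not integrable across $u=0$ for $u_0<0<u_1$. This is consistent with the dynamics: the orbit at $r=0$ passes through the hyperbolic fixed point, so the transition time is infinite, and indeed the right-hand side of (\ref{transtime}) tends to $+\infty$ as $r\to 0^+$ since ${\rm Argsh\,}(u_1/\sqrt{\mu(r)})\to +\infty$ and ${\rm Argsh\,}(u_0/\sqrt{\mu(r)})\to-\infty$. There is no genuine obstacle here; the lemma is a direct quadrature, and its role is only to provide the explicit transition-time formula that will be matched with the prescribed functions $\sig_k$ in the gluing procedure.
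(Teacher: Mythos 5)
Your proof is correct and is exactly the computation the paper intends: the paper's own proof is simply ``Immediate computation,'' namely separating the variables in $\dot u=\ell(r)\sqrt{u^2+\mu(r)}$ at fixed $r$ and using the primitive ${\rm Argsh\,}(u/\sqrt{\mu(r)})$. Your additional remark on the degenerate case $r=0$ is consistent with the paper's later use of the fact that the transition time tends to $+\infty$ as $r\to 0$.
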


\begin{proof}
Immediate computation.
\end{proof}

\vskip1mm

\begin{cor}
Let $\Sigma_0$ and $\Sigma_1$ be two smooth transverse sections for the Hamiltonian flow, contained in $\ov{\jD_k(\de^*,\bar\rho)}$,
such that $\Sigma_0$ intersects the stable manifold of $z_k$ and $\Sigma_1$ intersects the unstable manifold of $z_k$.
Then, if $\tau_k$ is the transition time from $\Sigma_0$ to $\Sigma_1$, expressed as a function of $r$, the derivative $\tau_k'(r)$ tends to
$-\infty$ when $r$ tends to $0$.
\end{cor}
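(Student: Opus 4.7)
The plan is to reduce the corollary directly to Lemma \ref{lem:sect} by expressing the two sections as graphs over the action coordinate $r$. First I observe that, since the Hamiltonian flow preserves $r$ and the sections $\Sigma_0$, $\Sigma_1$ are transverse to it, after shrinking $\bar\rho$ the parts of $\Sigma_0$ and $\Sigma_1$ contained in $\ov{\jD_k(\de^*,\bar\rho)}$ can be written in the $(u,r)$ coordinates as graphs $u=u_0(r)$ and $u=u_1(r)$, $r\in[0,\bar\rho]$. The assumption that $\Sigma_0$ meets the stable manifold and $\Sigma_1$ meets the unstable manifold of $z_k$ translates, in view of the normal form (\ref{eq:normform}), into $u_0(0)<0$ and $u_1(0)>0$. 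Moreover the smoothness of $H$ on $\S$ and of the Morse coordinate $u$ (which comes from the symplectic Morse lemma) implies that each $u_i$ is smooth on $[0,\bar\rho]$, so $u_i'(r)$ is bounded as $r\to 0$.

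Since $r$ is a first integral, the time to travel along the orbit at level $r$ from $u_0(r)$ to $u_1(r)$ is obtained by the same integration as in Lemma \ref{lem:sect}, yielding
$$
\tau_k(r)=\Frac{1}{\ell(r)}\Big(\text{Argsh}\,\Frac{u_1(r)}{\sqrt{\mu(r)}}-\text{Argsh}\,\Frac{u_0(r)}{\sqrt{\mu(r)}}\Big).
$$
Set $A(u,r)=\text{Argsh}(u/\sqrt{\mu(r)})$. Differentiation gives
$$
\Dron{A}{u}(u,r)=\Frac{1}{\sqrt{u^2+\mu(r)}},\qquad
\Dron{A}{r}(u,r)=-\Frac{u\,\mu'(r)}{2\mu(r)\sqrt{u^2+\mu(r)}}.
$$
As $r\to 0^+$ one has $u_i(r)\to u_i(0)\neq 0$, $\mu(r)\to 0^+$ and $\sqrt{u_i^2+\mu(r)}\to|u_i(0)|$, so the $\d_uA\cdot u_i'$ contributions and the $\ell'/\ell^2$ contribution remain bounded, while
$$
\Dron{A}{r}(u_i(r),r)\;\sim\;-\Frac{\text{sign}(u_i(0))\,\mu'(0)}{2\mu(r)}.
$$

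The key point is that $u_0(0)$ and $u_1(0)$ have opposite signs, so the two $\d_rA$ contributions reinforce each other. Gathering the estimates yields the leading-order asymptotic
$$
\tau_k'(r)\;\sim\;-\Frac{\mu'(0)}{\ell(0)\,\mu(r)}\quad\text{as } r\to 0^+.
$$
To conclude it remains to note that $\mu=h\inv$ where $h$ is an increasing $C^\infty$ local diffeomorphism at $0$, whence $\mu'(0)=1/h'(0)>0$; combined with $\mu(r)\to 0^+$, this forces $\tau_k'(r)\to -\infty$. The only potential obstacle is controlling the bounded terms $\d_uA(u_i,r)u_i'(r)$, which requires knowing that $u_i$ extends smoothly through $r=0$; this follows from the fact that, although $\xi_k$ itself is only a homeomorphism at $0$, the composition $u\circ\ga_i$ (with $\ga_i$ a smooth parametrization of $\Sigma_i$ in $\S$) is smooth because both $u$ and $H$ are smooth functions on $\S$.
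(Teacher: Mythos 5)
Your argument is correct in substance and follows essentially the paper's route: both proofs rest on the explicit transition-time formula of Lemma \ref{lem:sect} and identify the divergence $-\mu'(r)/\mu(r)$ as the dominant term, with the signs $u_0(0)<0<u_1(0)$ making the two contributions reinforce each other. The only real difference is organisational: the paper splits $\tau_k$ into the principal term between the vertical sections $S_i=\xi_k(\Delta_i)$ through the intersection points with the invariant manifolds (to which (\ref{transtime}) applies verbatim, with constant endpoints) plus two end corrections that are $C^1$-bounded because the flow is regular there, whereas you write $\Sigma_0,\Sigma_1$ as graphs $u=u_i(r)$ over the action coordinate and differentiate the formula with variable endpoints; your justification that $r$ is a legitimate smooth parameter on the sections (transversality to $X^H$ plus smoothness of $u$ coming from the symplectic Morse chart, not from $\xi_k$) is the right way to make that version precise.

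One intermediate claim is inaccurate as stated: the contribution $-\frac{\ell'(r)}{\ell(r)^2}\bigl(A(u_1(r),r)-A(u_0(r),r)\bigr)$ is \emph{not} bounded in general, since $A(u_1,r)-A(u_0,r)=\ell(r)\,\tau_k(r)$ grows like $\abs{\Log \mu(r)}$ as $r\to0$; it is bounded only if $\ell'(0)=0$. This does not damage the proof, because that term is $O(\abs{\Log\mu(r)})=o\bigl(1/\mu(r)\bigr)$ and is therefore still negligible compared with the dominant term $-\mu'(0)/\bigl(\ell(0)\mu(r)\bigr)$, so your final asymptotic for $\tau_k'(r)$, and hence the conclusion $\tau_k'(r)\to-\infty$, stands; just replace ``remains bounded'' by ``is of lower order than $1/\mu(r)$''.
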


\begin{proof} Let $u_0<0$ and $u_1>0$ be the $u$--coordinates of the intersections points of $\Sigma_0$ and $\Sigma_1$ with the invariant manifolds of $z_k$. Let $S_0=\xi_k(\Delta_0)$ and $S_1=\xi_k(\Delta_1)$, with $\Delta_i$ as in the lemma above, and let $\tau$ be the transition time
between $S_0$ and $S_1$.
Equation (\ref{transtime}) immediately shows that $\tau'(r)\to -\infty$ when $r\to 0$. Now the transition time $\tau_k$ is the sum of the
principal term $\tau$ and two complementray terms, transition times between the
sections $\Sigma_0,S_0$ and $S_1,\Sigma_1$. These terms are bounded in the $C^1$ topology,
which proves that  $\tau_k'(r)$ tends to
$-\infty$ when $r$ tends to $0$.
\end{proof}

As a consequence of the previous corollary, there exists $\ha\rho$ such that, for each $k$, the transition time $\tau_k$ between
the sections $\Sigma_k^+$ and $\Sigma_k^-$ is a decreasing function of $r$ on the interval $[0,\ha\rho]$. In the following, we localize
our constructions to the domain $\jC(\ha \rho)$.

\vskip3mm

{\bf 4. Conjugacy in the domain $O_{k}$.}
The domain $O_{k}$ is now well-defined : namely $O_k$ is limited by the sections $\Sigma_{k}^\pm$, the level curve $H\inv(\{\ha\rho\})$ and the stable and
unstable manifolds of the point $z_{k}$.  The conjugacy we are searching for in this domain is given by the following lemma.

\begin{lemma}\label{lem:conjo}
 There exists a continuous increasing function $\chi_k:[0,1]\to\R$, smooth on $]0,1]$ and such as $\chi_k(0)=0$, such that, if
$V_k$ is the vector field defined on $\O_k$ by
$$
V_k(\th,r)=\sqrt{(\th-k/p)^2+\chi_k(r)}\,\Dron{}{\th},
$$
there exists a homeomorphism $F_k: (\O_k,o_k)\to (O_k,z_k)$,  which is smooth on
$\O_k\setminus\{o_k\}$ and which satisfies on this domain:
\begin{equation}\label{finform}
F_k^*(V^H)=V_k.
\end{equation}
As a consequence, $F_k$ conjugates the local flows of $V_k$ and $V^H$ on the domains $\O_k$ and $O_k$.
Moreover, $F_k$ sends the left
boudary of $\O_k$ on the section $\Sigma_k^+$, the right boundary of $\O_k$ on the section $\Sigma_k^-$ and
the horizontal of equation $r=r_0$ in $\O_k$ on the curve $H\inv(\{\ha \rho\,r_0\})\cap O_k$.
\end{lemma}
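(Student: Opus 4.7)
The plan is to work in the local normal form coordinates $(u,r)$ around $z_k$ provided by $\xi_k$, in which $V^H$ reads $\ell(r)\sqrt{u^2+\mu(r)}\,\d_u$ and, by the choice of $\ha\rho$, the whole of $O_k$ appears as the region bounded by the curve $u=u_k^+(r)$ representing $\Sigma_k^+$ (with $u_k^+(0)<0$), the vertical $u=\de^*/2$ representing $\Sigma_k^-$, the top $r=\ha\rho$, and the two half-lines of the stable and unstable manifolds of $z_k$. Since $V_k$ and $V^H$ both preserve their height variable, the conjugacy will be built fiberwise by a flow-box identification that matches transit times at each height. By Lemma~\ref{lem:sect}, the Hamiltonian transit time $\tau_k(r)$ between $\Sigma_k^+$ and $\Sigma_k^-$ is smooth, decreasing and tends to $+\infty$ as $r\to 0^+$. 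On the $p$-model side, a direct integration gives $T_k(r)=2\,{\rm Argsh}\bigl(\al/\sqrt{\chi_k(r)}\bigr)$. Imposing $T_k(r)=\tau_k(\ha\rho r)$, which is the only matching compatible with sending $\{r=r_0\}$ onto $H\inv(\{\ha\rho r_0\})$, forces
$$
\chi_k(r)=\frac{\al^2}{\sinh^2\bigl(\tau_k(\ha\rho r)/2\bigr)},\qquad r\in\,]0,1],\qquad \chi_k(0):=0,
$$
which is continuous on $[0,1]$, smooth and increasing on $]0,1]$, with $\chi_k(0)=0$, as required.

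\textbf{Construction of $F_k$.} For $r>0$, denote by $p_k^+(r)\in\Sigma_k^+$ the point of height $\ha\rho r$, and by $t(\th,r)\in[0,T_k(r)]$ the unique time at which the $V_k$-orbit through $(k/p-\al,r)$ reaches $(\th,r)$; set $F_k(\th,r):=\phi_{t(\th,r)}(p_k^+(r))$. On the invariant set $r=0$, the stable (resp.\ unstable) branch $\th<k/p$ (resp.\ $\th>k/p$) of $V_k$ is sent by forward flow from $p_k^+(0)$ (resp.\ backward flow from $q_k^-(0):=\Sigma_k^-\cap\{r=0\}$) with the same transit-time parameterization, and $F_k(o_k):=z_k$. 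By transit-time matching, $F_k$ is a bijection of $\O_k$ onto $O_k$ satisfying $F_k^*V^H=V_k$ on $\O_k\setminus\{o_k\}$, sending the vertical boundaries of $\O_k$ onto $\Sigma_k^\pm$ and each horizontal $\{r=r_0\}$ onto $H\inv(\{\ha\rho r_0\})\cap O_k$. Smoothness off $o_k$ is automatic since both vector fields are smooth away from their fixed points.

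\textbf{Continuity at $o_k$: the main difficulty.} The delicate step is to check continuity of $F_k$ and $F_k\inv$ at the singular corner. Writing $\xi_k\inv F_k(\th_n,r_n)=(u_n,s_n)$ with $s_n=\ha\rho r_n$, an explicit integration gives $u_n=\sqrt{\mu(s_n)}\sinh\bigl(\ell(s_n)\alpha_n+(B_n+B_n^+)/2\bigr)$, where the ${\rm Argsh}$-arguments $\alpha_n,B_n,B_n^+$ are those naturally associated to $\th_n-k/p$, $\de^*/2$, $u_k^+(s_n)$. As $s_n\to 0$ the divergent contributions in $B_n$ and $B_n^+$ cancel, while the very definition of $\chi_k$ forces the balance $\mu(s_n)^{1/2}/\chi_k(r_n)^{\ell(s_n)/2}\to(2\al)^{-\ell(0)}\sqrt{\de^*|u_k^+(0)|}$, which compensates the potential blow-up of $\sinh$ when $\alpha_n\to\pm\infty$. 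A case analysis on $\alpha_n$ (bounded versus $\pm\infty$), combined with the asymptotic ${\rm Argsh}(x)\sim\mathrm{sgn}(x)\log(2|x|)$, then yields $u_n\to 0$ whenever $\th_n-k/p\to 0$, hence $F_k(\th_n,r_n)\to z_k$; continuity of $F_k\inv$ is proved symmetrically. This precise cancellation, which is the main obstacle of the proof, is exactly what singles out the transit-time definition of $\chi_k$ as canonical.
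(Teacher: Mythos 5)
Your proof is correct and follows essentially the same route as the paper: you define $\chi_k$ by matching the model transit time $2\,{\rm Argsh\,}(\al/\sqrt{\chi_k(r)})$ with the Hamiltonian transit time $\tau_k(\ha\rho\, r)$ (your $\chi_k=\al^2/\sh^2(\tau_k/2)$ even corrects the missing square in the paper's displayed formula) and then build $F_k$ by the same flow-box, time-matching identification, with the same boundary correspondences. The only difference is one of detail: where the paper simply asserts that the conjugacy admits a continuous extension through the singular corner, you carry out the asymptotic cancellation at $o_k$ explicitly (your limiting constant is off by a harmless factor $\sqrt{2}$, which does not affect the argument).
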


\begin{proof} Recall that $\O_k=[k/p-\al,k/p+\al]\times [0,1]$, where $\al>0$ is given. We first perform a rescaling of the variable $r$ to let
it vary in $[0,1]$,  by setting $r'=r/\ha\rho$. This does not affect the form of the  vector field
$V=\xi_k^*(V^H)$ in the coordinates $(u,r)$.

We therefore start with a smooth vector field $V$ of the form (\ref{finform}), defined on a domain $D\subset D(\de^*,1)$,
limited by the section $\Sigma^-$ of equation $u=\de^*/2$ on the right, and by a given global transverse section $\Sigma^+$ on the left.
These two sections are obviously deduced from the previous data.
We denote by $\tau$ the transition time between these two sections. Due to the choice of $\ha\rho$,  $\tau$ is a decreasing function of $r\in\,]0,1]$, which tends to $+\infty$ when $r$ tends to $0$.

Forgetting about the innocuous term $k/p$ in the domain of the conjugacy, we now consider the rectangle $\O=[-\al,\al]\times[0,1]$ together with
a vector field of the form
$$
\bar V(\th,r)= \sqrt{\th^2+\chi(r)}\,\Dron{}{u},
$$
and we want to conjugate $\bar V$ with $V$.

Let $S^+$ and $S^-$ be the sections of equations $u=-\al$ and $u=+\al$ respectively.
We first note that one  can choose the  function $\chi$ so that the transition time $\bar \tau$ induced by $\bar V$ between the two sections
$S^+$ and $S^-$ coincides with $\tau$. Indeed, lemma \ref{lem:sect} shows that $\bar \tau$ has the form
$
\bar \tau(r)=2\,{\rm Argsh\,}\frac{\al}{\sqrt{\chi(r)}},
$
so one chooses
$$
\chi(r) = \Frac{\al}{\sh(\tau(r)/2)}
$$
for $r>0$ and $\chi(0)=0$.  The function $\chi$ is smooth on $]0,1]$ and
one could prove, using a more explicit form of $\tau$, that it admits a $C^1$ continuation to $[0,1]$ (but it is not necessary).
Since $\tau$ is decreasing, $\chi$ is increasing.

Now, to construct the conjugacy, let $\Phi$ and $\bar\Phi$ be the local flows of $V$ and $\bar V$. For a point $(\th,r)\in [-\al,\al]\times ]0,1]$,
let $t(r)\leq 0$ be the time such that $\bar\Phi(t(r),(\th,r))\in S^+$. We define a map $\zeta : [-\al,\al]\times ]0,1]\to  D$  by
$$
\ze(u,r)=\Phi (-t(r), (\al,r)).
$$
Then $\zeta$ is clearly a diffeomorphism which sends $[-\al,\al]\times ]0,1]$ on its image in $D$, which exchanges the vector fields
$V$ and $\bar V$. Moreover, it has a well-defined continuation $\bar\ze$ to $[-\al,\al]\times [0,1]$, which is
a homeomorphism between $\O$ and $D$ which conjugates the flows $\bar \Phi$ and $\Phi$. One readily sees that
$\bar\zeta$ sends $S^+$ on $\Sig^+$ and $S^-$ on $\Sig^-$.

The remainder of the proof is trivial (one only has to translate the variable $\th$ by $k/p$ and to compose by $\xi_k$ to obtain $F_k$).
\end{proof}

\vskip 2mm

{\bf 5. Conjugacy in the domain $R_k$.} As depicted in figure \ref{fig:conjug}, the domain
$R_k$ is the flow-box  zone of the surface $\S$ limited by the orbit $\Ga_k$ joining $z_k$ to $z_{k+1}$, the level $H\inv(\{\ha\rho\})$ and the sections $\Sig^-_k$ and  $\Sig^+_{k+1}$. Let us state the conjugacy result relative to this domain  (recall that $\sig_k$ is the transition
time between $\Sig^-_k$ and  $\Sig^+_{k+1}$).

\begin{lemma}\label{lem:conjr}
  Let $W_k$ be the vector field on $\jR_k$ defined by
$$
W_k(\th,r)= w_k(\th,r)\,\Dron{}{\th},
$$
with $w_k>0$ and smooth, satisfying the relation:
\begin{equation}\label{eq:relfond}
\int_{k/p+\al}^{(k+1)/p-\al} \Frac{d\th}{w_k(\th,r)}=\sig_k(r).
\end{equation}
Then there exists a diffeomorphism $G_k:\jR_k\to R_k$ which conjugates the vector fields $W_k$ and $V^H$. Moreover, $G_k$ sends
the left boundary of $\jR_k$ on $\Sigma_k^-$, the right boundary of $\jR_k$ on $\Sigma_{k+1}^+$ and the horizontal
of equation $r=r_0$ in $\jR_k$ on the curve $H\inv{(\ha\rho\,r_0)}\cap R_k$.
\end{lemma}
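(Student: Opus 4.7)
The plan is to realize $G_k$ as a flow-box construction. The domain $R_k$ contains no critical points of $V^H$ and is foliated by orbit segments of the Hamiltonian flow going from $\Sig_k^-$ to $\Sig_{k+1}^+$, each with transit time $\sig_k(r)$ (this is the content of lemma~\ref{lem:timesect}). Similarly $\jR_k$ is foliated by orbit segments of $W_k$ running from the left boundary $\{\th = k/p+\al\}$ to the right boundary $\{\th = (k+1)/p-\al\}$, and hypothesis \eqref{eq:relfond} says precisely that the transit time of $W_k$ along each horizontal level equals $\sig_k(r)$. This compatibility is what makes the two flow-box trivializations matchable.

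Concretely, I would introduce the time function
\[
 t_k(\th,r) = \int_{k/p+\al}^{\th} \frac{d\th'}{w_k(\th',r)},
\]
which is smooth and strictly increasing in $\th$ by positivity of $w_k$; it satisfies $t_k(k/p+\al, r) = 0$ and, by \eqref{eq:relfond}, $t_k((k+1)/p-\al, r) = \sig_k(r)$. By construction $W_k\cdot t_k \equiv 1$ and $W_k\cdot r \equiv 0$, so $t_k(\cdot,r)$ is exactly the flow-time of $W_k$ from the left boundary along the horizontal $\{r=\mathrm{const}\}$. Let then $\bar a_k(r)$ denote the point of $\Sig_k^-$ of $H$-value $\ha\rho\,r$ (the rescaling of the parameterization $a_k$ of lemma~\ref{lem:timesect}, so that $r\in[0,1]$), and set
\[
 G_k(\th, r) = \phi_{t_k(\th,r)}\bigl(\bar a_k(r)\bigr).
\]

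It remains to check the three assertions. Smoothness of $G_k$ is clear since $t_k$, $\bar a_k$ and $\phi$ are smooth on the relevant domains. The Jacobian of $G_k$ splits into the nonzero longitudinal derivative $\partial_\th t_k = 1/w_k$ times the transverse derivative of $r \mapsto \bar a_k(r)$, which is nonvanishing because $\Sig_k^-$ is a smooth transverse section; combined with the absence of fixed points of $\phi$ on the relevant time intervals and with $H\circ G_k(\th,r)=\ha\rho\, r$, this yields that $G_k$ is a diffeomorphism onto $R_k$. The conjugacy follows from a direct computation: since $\phi$ is the flow of $V^H$, $dG_k(W_k) = (W_k \cdot t_k)\, V^H \circ G_k = V^H\circ G_k$. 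For the boundary conditions, $t_k(k/p+\al,r)=0$ sends the left boundary onto $\bar a_k([0,1]) = \Sig_k^-$; $t_k((k+1)/p-\al,r)=\sig_k(r)$ sends the right boundary to $\phi_{\sig_k(r)}(\bar a_k(r))\in\Sig_{k+1}^+$ by lemma~\ref{lem:timesect}; and invariance of $H$ under $\phi$ ensures that the horizontal $\{r=r_0\}$ is sent into $H\inv(\ha\rho\,r_0)$. The only substantive ingredient is the compatibility enforced by \eqref{eq:relfond}; once it is imposed, the lemma follows without any real obstacle.
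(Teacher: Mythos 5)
Your construction is correct and is essentially the paper's own argument: the paper proves this lemma by invoking the same flow-time--matching construction used for Lemma \ref{lem:conjo} (simpler here since $R_k$ contains no fixed points), which is exactly your flow-box map $G_k(\th,r)=\phi_{t_k(\th,r)}(\bar a_k(r))$ with the transit times matched through (\ref{eq:relfond}). Your write-up just makes explicit the details the paper leaves implicit, so nothing further is needed.
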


\begin{proof} One uses exactly the same construction as in the previous lemma, which is here even simpler due to the absence of
fixed point. This is why the resulting map $G_k$ is a (smooth) diffeomorphism on its domain.
\end{proof}

\vskip2mm

{\bf 6. Smoothing, global gluing and the torsion and tameness properties.} We can now construct a $p$--system on $\jA$
by gluing the vector fields $V_k$ and $W_k$ of lemmas \ref{lem:conjo} and \ref{lem:conjr}.
It is easy to prove the torsion and tameness properties for the glued system.
The drawback of this system is that it is discontinuous at the boudary of the zones $\O_k$. We will see that it is  very simple to
modify it a little bit to obtain a suitable smooth vector field on $\jA$.

\vskip2mm

$\bullet$ We begin with the tameness property, for which we will have to choose the functions $w_k$ more precisely. We want
the fundamental domain of definition \ref{def:tameness} to be contained in $R_1$. We  introduce the subinterval $I=[1/2-a/4,1/2+a/4]$ of
length $a/2$ centered at the middle point $\th_0=1/2$ of $R_1$. We will choose the functions $w_k$ in such a way
that $\jK=I\times [0,1]$; more precisely, we want the length of $V(\th,r_0)$
for $(\th,r_0)\in\jK$ to be constant and larger that the maximal length of $V(\th,r_0)$ for $(\th,r_0)\notin\jK$.

\vskip2mm For $k\geq 2$, let us choose
$$
w_k(\th,r)=\Frac{a}{\sig_k(r)}, \qquad a=1/p-2\al
$$
so $w_k$ is independent of $\th$ and satisfies the relation (\ref{eq:relfond}). As for $w_1$, we choose
a smooth function on $R_1=[\al,1-\al]\times[0,1]$, constant and equal to
\begin{equation}\label{eq:maj}
\Max(a/2,\Max_{1\leq k\leq p}\sqrt{(\de^*)^2+\chi_k(\ha\rho)})
\end{equation}
over $I\times[0,1]$, and we choose the values
of $w_1(\th,r)$ for $\th\notin I$ in order
to satisfy the relation (\ref{eq:relfond}) for each fixed $r$, which is possible since $\sig_1(r)>1$. We moreover require
that
$$
w_1(\th,r)\leq w_1(\th,r')
$$
if $r\leq r'$ and $\th\in[\al,1-\al]$. Such a choice is obviously possible since the function $\sig_1$ is decreasing.

Now, since we have assumed that $\sig_k>2$, one sees that, when $\th\in I$
$$
w_1(\th,r)< \Max_{2\leq k\leq p} w_k.
$$
We also have to compare $w_1$ with the length of $V$ in the domains $O_k$, that is with the function
$\sqrt{(\th-k/p)^2+\chi_k(r)}$. Again, equation (\ref{eq:maj}) proves that the value on $w_1$ on $\jK$ is larger than
the maximum of the lengthes on the domains $O_k$.
Finally, one clearly sees that $I\times[0,1]\subset R_1$ is a fundamental domain for the flow of $V$.
\vskip2mm

Now, fix  two nearby points  $(\th,r)$ and $(\th',r)$ in $\jR_1$, on the same orbit, with lifts $(x,r)$ and $(x',r)$ to the universal covering
$\til\jA=\R\times[0,1]$, chosen such that $0<x'-x<1/2$.
We write as usual $\til V$ and $\til \ph$ for the lifted vector field and flow. Let $t_0$ be the unique real number such that
$(x',r)=\til\ph_{t_0}(x,r)$.  Then, setting
$(x(t),r)$ and $(x'(t),r)$ for $\til\ph_t(x,r)$ and $\til\ph_t(x',r)$, the separation function is defined by $E(t)=x'(t)-x(t)$.
Note that
$$
t_0=\int_{x(t)}^{x'(t)}\Frac{d u}{\til V(u)}= \int_{x}^{x'}\Frac{d u}{\til V(u)}.
$$
Now, since the lentgh of $V$ is maximal in the domain $R_1$, the time separation $E(t)$ of the
two points is  minimal when they both belong to $R_1$. This proves the tameness condition for the fundamental domain $\jK$.

\vskip2mm

$\bullet$ The torsion condition is now easy, indeed it suffices to chek that is is satisfied in each domain $O_k$ and $R_k$ by the
vector fields $V_k$ and $W_k$.  For $V_k$, the torsion condition is an immediate consequence
of the fact that $\chi_k$ is an increasing function, so that for $r'\geq r$ in $[0,1]$, the length of $V_k(\th,r')$ is larger than that
of $V_k(\th,r)$. As for $W_k$, it is even easier, since the length of $W_k(\th,r)$ is independent of $\th$ and equal to
$1/\sig_k(\ha \rho\, r)$, which is an increasing function of $r$ since $\sig_k$ has been assumed to be decreasing (condition 2).

\vskip2mm

$\bullet$ The smoothing process is now obvious. One checks that is it possible to modify the vector field $W_k$ in the neighborhood of the
entrance and exit boudaries of $\jR_k$, in such a way that the gluing with $V_k$ and $V_{k+1}$ is smooth and the equality (\ref{eq:relfond}) is satisfied.
One can moreover require that
$$
w_k(\th,r)< w_k(\th, r')
$$
for $0\leq r < r'\leq 1$ and $\th\in\T$. This way, the tameness and torsion properties are still valid for the modified smooth glued vector
field on $\jA$.

\vskip3mm

{\bf 7. Conjugacy.}
One can now construct a surjective continuous map $\Psi: \jA\to \ov{\jC(\ha\rho)}$ by gluing together the
homeomorphisms  $F_k$ and $G_k$ on the boundaries of their domains. It is clear that  $\Psi\circ \ph=\phi\circ\Psi$,
where $\ph$ is the time-one map of the $p$-model on $\jA$ and $\phi$ is the Hamiltonian time-one map. Moreover, $\Psi$ is
uniformly continuous by compactness. Now the restriction of $\Psi$ to  $\T\times\,]0,1]$ is a diffeomeomorphism, and it is
clear by construction that $\Psi\inv$ is uniformly continuous. Proposition \ref{prop:desing} is proved.


\section{The complexity index of some plane gradient models}\label{Sec:gradmod}
To conclude this paper, we briefly describe the computation of the compexity index $\jL$ in a case which is completely
different from the previous one : an example of a gradient system in the plane. We will not enter into the details, the
case of gradient systems  will be extensively studied in a subsequent paper.

\subsection{Gradient models on plane strips}
We first introduce a special strip  in the plane, on which the systems will be defined.
Given an integer $p\geq2$ and $\de>0$, let $\eta_p$ be a smooth function
on $[0,1/p]$ which satisfies the conditions:
$\eta_p(0)=0$, $\eta(7/(8p))=1$, $\eta'(x)>0$ for $x\in\,]0,1/p[$,  $\eta^{(k)}(7/(8p))=0$ for $k\geq 1$
and finally $\eta'$ constant on the interval $[0,1/(8p)]$.
We denote by $\bS_p$ the subset of the plane formed by the points $(x,y)$ such that:
$$
\begin{array}{llll}
&x\in[1/p,1-1/p], \quad &y\in[0,1]\\
\text{or}&x\in[0,1/p], \quad &0\leq y\leq \eta_p(x)\\
\text{or}&   x\in[1-1/p,1], \quad  &0\leq y\leq \eta_p(1-x).
\end{array}
$$
In the following, the integer $p$ will be fixed once and for all and we abbreviate $\bS_p$ in $\bS$.
We set $O_\al=(0,0)$, $O_\om=(1,0)$ and $O_k=(k/p,0)$ for $1\leq k\leq p-1$.

\begin{figure}[h]
\begin{center}
\begin{pspicture}(4cm,2.5cm)
\psset{xunit=1cm,yunit=1cm}
\rput(2,0){
\psline[linewidth=.4mm](-6,0)(6,0)
\psline[linewidth=.4mm](-3.5,1)(3.5,1)
\psline[linewidth=.4mm](-6,0)(-5,.6)
\pscurve[linewidth=.4mm](-5,.6)(-4.25,.9)(-3.5,1)
\psline[linewidth=.4mm](6,0)(5,.6)
\pscurve[linewidth=.4mm](5,.6)(4.25,.9)(3.5,1)
\psline[linewidth=.2mm](-3.5,.7)(3.5,.7)
\pscurve[linewidth=.2mm](5,.4)(4.25,.65)(3.5,.7)
\psline[linewidth=.2mm](5,.4)(6,0)
\pscurve[linewidth=.2mm](-5,.4)(-4.25,.65)(-3.5,.7)
\psline[linewidth=.2mm](-5,.4)(-6,0)
\psline[linewidth=.3mm]{->}(-1.8,.7)(-1.5,.7)
\psline[linewidth=.3mm]{->}(1.5,.7)(1.8,.7)
\psline[linewidth=.2mm](-3.5,.3)(3.5,.3)
\pscurve[linewidth=.2mm](5,.17)(4.25,.27)(3.5,.3)
\psline[linewidth=.2mm](5,.17)(6,0)
\pscurve[linewidth=.2mm](-5,.17)(-4.25,.27)(-3.5,.3)
\psline[linewidth=.2mm](-5,.17)(-6,0)
\psline[linewidth=.3mm]{->}(-1.8,.3)(-1.5,.3)
\psline[linewidth=.3mm]{->}(1.5,.3)(1.8,.3)
\psline[linewidth=.4mm]{->}(-1.8,0)(-1.5,0)
\psline[linewidth=.4mm]{->}(1.5,0)(1.8,0)
\psline[linewidth=.4mm]{->}(-4.8,0)(-4.5,0)
\psline[linewidth=.4mm]{->}(4.5,0)(4.8,0)
\pscircle[fillstyle=solid,fillcolor=black](-6,0){.07}
\pscircle[fillstyle=solid,fillcolor=black](-3,0){.07}
\pscircle[fillstyle=solid,fillcolor=black](0,0){.07}
\pscircle[fillstyle=solid,fillcolor=black](3,0){.07}
\pscircle[fillstyle=solid,fillcolor=black](6,0){.07}
\psline[linewidth=.2mm](-2.6,0)(-2.6,1)
\psline[linewidth=.2mm](-3.4,0)(-3.4,1)
\psline[linewidth=.2mm](-.4,0)(-.4,1)
\psline[linewidth=.2mm](.4,0)(.4,1)
\psline[linewidth=.2mm](2.6,0)(2.6,1)
\psline[linewidth=.2mm](3.4,0)(3.4,1)
\psline[linewidth=.2mm](-5.6,0)(-5.6,.25)
\psline[linewidth=.2mm](5.6,0)(5.6,.25)
\rput(-6,-.3){$O_\al$}
\rput(-3,-.3){$O_1$}
\rput(0,-.3){$O_2$}
\rput(3,-.3){$O_3$}
\rput(6,-.3){$O_\om$}
\rput(1.5,1.8){$\bS(\de)$}
\psline[linewidth=.2mm](1.5,0.8)(1.5,1.6)
}
\end{pspicture}
\end{center}
\vskip0cm
\end{figure}

\begin{Def}
We call {\em asymptotic $p$-model}, or simply a {\em $p$--model}, any $C^\infty$ vector field $V=(X,Y)$ defined on the strip $\bS$ which satisfies
the following conditions.
\vskip 1mm
{\rm (C1)}  $Y(x,y)=0$ if  $x\in[1/p,1-1/p]$ and $X(x,y)>0$ if $y>0$, so the orbits of $V$ are ``horizontal'' between
the segments $x=1/p$ and $x=1-1/p$, and they are oriented from left to right.
\vskip 1mm
{\rm (C2)}  The points $O_\al$, $O_\om$, $O_k$, $k\in\{1,\ldots, p-1\}$, are the only fixed points of $V$ and they are connected by heteroclinc orbits, that is: $X(k/p,0)=0$, $0\leq k\leq p$, and  $X(x,0)>0$ if $x\notin\{0,1/p,\ldots,1\}$.
\vskip 1mm
{\rm (C3)} For $1\leq k\leq p-1$, in the neighborhood $\O_k$ of $O_k$  defined by
$x\in[\frac{k-1/8}{p},\frac{k+1/8}{p}]$,  the $X$ component of the vector field $V$ reads
$$
X(x,y)=\ell_k\sqrt{y+(x-\frac{k}{p})^2},
$$
with $\ell_k>0$.
\vskip 1mm
\vskip 1mm
{\rm (C4)} In the neighborhood $\O_\al$ of $O_\al$  defined by
$x\in[0,1/(8p)]$,  the vector field $V$ reads
$$
X(x,y)=\ell_\al x,\quad Y(x,y)=\ell_{\al} y
$$
with $\ell_\al>0$, while in the neighborhood $\O_\om$ of $O_\om$  defined by
$x\in[1-1/(8p),1]$,  the vector field $V$ reads
$$
X(x,y)=-\ell_\om(x-1),\quad Y(x,y)=-\ell_\om y
$$
with $\ell_\om>0$
\vskip 1mm
\end{Def}

The subset of $\bS$ defined by $7/(8p)\leq x\leq 1-7/(8p)$ will be refer to as the {\em flat zone}. Note that a $p$--model has a well-defined
smooth gobal flow.
As in the Hamiltonian case, we have to add two technical conditions that we now define.

\begin{Def} {\bf (Torsion).} Consider an asymptotic $p$--model $V$ on the strip $\bS$ and denote by $\ph_t$ the time $t$ diffeomorphism associated with $V$, for $t\in\R$. We say that $V$
satifies the torsion condition when
for all $t\geq 0$ and for all $x\in\,]0,1[$, if $y_1$ and $y_2$ are in $]0,\de]$ and satisfy $y_1<y_2$,
and if we set $\ph_t(x,y_1)=(x_1(t),y_1(t))$ and
$\ph_t(x,y_2)=(x_2(t),y_2(t))$, then $x_1(t) \leq x_2(t)$ for all $t\geq 0$, with the stronger condition that $x_1(t) < x_2(t)$ for all $t\geq 0$
when $7/(8p)\leq x\leq 1-7/(8p)$ (and so the initial points are in th flat zone).
\end{Def}

To introduce the second condition we first need to define what we call the separation function for two points on the same orbit of
an asymptotic model.
Consider an orbit $\Ga$ of $V$ and fix two points $a$ and $a'$ on $\Ga$. We set $\ph_t(a)=(x(t),y(t))$ and
$\ph_t(a')=(x'(t),y'(t))$. Then we define the separation of $a$ and $a'$ as the function
$$
E_{a,a'}(t)=x'(t)-x(t),
$$
so $E_{a,a'}$ is $C^\infty$, has constant sign, and $E_{a,a'}\to 0$ when $t\to\pm \infty$. We are interested in the behaviour
of the maxima of $E_{a,a'}$.

A fundamental domain for the system will be a subset $\jK$ of $\bS$ limited by a vertical segment $\Delta$ on the left and by
its image $\ph(\Delta)$ on the right. A fundamental domain on an orbit is the part of the orbit limited by a point and its image
by the time-one flow.

\begin{Def}{\bf (Tameness).}  Consider an asymptotic model $V$ on the strip $\bS$.
We say that $V$ is {\em tame} when there exists a fundamental domain $\K$, contained in the flat zone, such that if $a$ and $a'$ are on the
same orbit of $V$ and are contained in a fundamental domain on this orbit, then for each $t_0$ such that $E_{a,a'}(t_0)$ is maximum,
the points $\ph_{t_0}(a)$ and $\ph_{t_0}(a')$ are located inside the domain $\K$.
\end{Def}

\subsection{The complexity indices of asymptotic models}
We are now in a position to prove the following result.

\begin{thm}\label{thm:indgrad}
Consider a tame $p$--model with torsion on the strip $\bS$,  and let $\ph$ be its time-one map.
Then $\L(\ph)=2$.
\end{thm}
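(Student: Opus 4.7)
The plan is to mirror the structure of the proof of Proposition~\ref{prop:pmodel}: establish $\L(\ph)\leq 2$ by producing explicit $(N,\eps)$-coverings of $\bS$ of cardinality $O(N^2)$, and $\L(\ph)\geq 2$ by exhibiting $(N,\eps)$-separated subsets of cardinality $\gtrsim N^2$. The main adaptation is that orbits are heteroclinic rather than periodic: each orbit with $y>0$ issues from the source $O_\al$ in the past and tends to the sink $O_\om$ in the future, and the role played by the period $T(r)$ in the Hamiltonian case is now played by a transit time $\tau(y)$ between two fixed global transverse sections in the flat zone. The normal forms (C3) and (C4) imply $\tau(y)\sim c\abs{\log y}$ as $y\to 0$, so $\tau$ is a continuous strictly decreasing function of $y$ from $]0,y_{\max}]$ onto $[\tau^*,+\infty[$, which plays the role of the bijection $r\mapsto T(r)$.

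For the upper bound, the first step is to introduce a cutoff $y^*=y^*(N)$ with $\tau(y^*)\asymp N$ and split the strip as $\bS=\bS_N\cup\bS_N^*$ with $\bS_N=\{y\leq y^*\}$ and $\bS_N^*=\{y\geq y^*\}$. Using the linear contracting normal form at $O_\om$ together with an argument parallel to Lemma~\ref{lem:choice}, I would show that on a time scale bounded by $N$ the trajectories starting in $\bS_N^*$ remain confined in a fixed flow-box region, saddle neighborhood $\O_k$, or in a fixed neighborhood of $O_\al$ or $O_\om$. A covering construction entirely analogous to the proof of Proposition~\ref{prop:indseg}, performed separately in each such region, then yields $G_N(\bS_N^*,\eps)=O(N)$. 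The domain $\bS_N$ is handled by the natural analog of Lemma~\ref{lem:fatten}: using torsion together with the tameness of the fundamental domain in the flat zone, one proves that any horizontal strip $\{y_1\leq y\leq y_2\}$ with $y_2-y_1\leq c_1\eps/k$ and $k=[\ka N/\tau(y_1)]$ can be covered by at most $c_2\tau(y_1)$ balls of $d_N^\ph$-diameter $\eps$. Partitioning $\bS_N$ into the strips $\mathbf{S}_k$ on which $\tau(y)\in\,]\ka N/(k+1),\ka N/k]$ and summing the convergent series $\sum_{k\geq 1}c_\eps N^2/k^2$ gives $G_N(\bS_N,\eps)\leq \al(\eps)N^2$, whence $\L(\ph)\leq 2$.

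For the lower bound, the goal is to produce, for $N$ large, an $(N,\eps)$-separated subset of $\bS$ of cardinality at least $cN^2$. Exactly as in the Hamiltonian case, one first selects $\Theta(N)$ values $y_1>y_2>\cdots>y_m$ whose transit times $\tau(y_i)$ lie in $[N/3,N/2]$ and satisfy $\abs{\tau(y_i)-\tau(y_j)}\geq 4$ for $i\neq j$. Torsion together with the monotonicity of $\tau$ then guarantees that any two points $a\in\Ga_{y_i}$ and $a'\in\Ga_{y_j}$ are $(N,\eps)$-separated, by the same entrance-section argument as in the proof of Proposition~\ref{prop:pmodel}. On each orbit $\Ga_{y_i}$ one then produces $\Theta(N)$ further separated points by taking the backward iterates $a_i^{(k)}=\ph^{-k}(a_{y_i})$ for $k=0,\ldots,[\tau(y_i)/2]$; the union over $i$ and $k$ produces at least $m\cdot[N/6]\asymp N^2$ $(N,\eps)$-separated points.

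The hard part will be the single-orbit separation step, because the Hamiltonian argument exploited the closed character of $C_q$ and its period $q$, whereas in the gradient setting there is no return map to a common section. The replacement argument I would use exploits the fact that the flat zone acts as a common \emph{highway} traversed by every orbit: for any two shifted backward trajectories the maximum of the separation function is reached precisely at times when one of the two points lies in the flat fundamental domain, where the vector field has speed bounded below by a fixed constant. Making this rigorous requires the tameness condition (to localize these maxima in the flat zone) together with precise estimates on the speeds inside the saddle neighborhoods and near $O_\al,O_\om$ extracted from the normal forms (C3) and (C4). Once this single-orbit separation is established, the assembly of the full lower-bound set follows verbatim the construction used for $\jA$ in Proposition~\ref{prop:pmodel}.
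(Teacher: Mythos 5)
Your lower--bound sketch follows the paper's construction and is essentially sound, but the upper bound as written contains a genuine error: the two dynamical regimes are interchanged. With your cutoff $\tau(y^*)\asymp N$, the region $\bS_N^*=\{y\geq y^*\}$ consists of the orbits whose transit time is $\lesssim N$; such orbits cross the whole strip inside the observation window, so they are certainly not ``confined in a fixed flow-box region or saddle neighborhood'' on the time scale $N$, and the claim $G_N(\bS_N^*,\eps)=O(N)$ cannot hold -- this is precisely the region that contains your own $(N,\eps)$--separated set of cardinality $\asymp N^2$ (transit times in $[N/3,N/2]$). Symmetrically, the strips ${\bf S}_k$ on which $\tau\in\,]\ka N/(k+1),\ka N/k]$, $k\geq 1$, only cover transit times $\leq \ka N$ and therefore do not cover your $\bS_N=\{y\leq y^*\}$, where $\tau$ ranges up to $+\infty$; there the per-slice cost $c_2\,\tau(y_1)$ is unbounded and the series argument is vacuous. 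The correct assignment is the opposite one: the near-boundary region ($\tau\geq\ka N$) is covered \emph{linearly}, by the confinement argument analogous to lemma \ref{lem:choice} combined with the scheme of proposition \ref{prop:indseg}, while the region $\tau\leq\ka N$ carries the quadratic complexity. Moreover, the $1/k$ refinement in lemma \ref{lem:fatten} encodes the accumulation of drift over $\approx m/q$ revolutions of a \emph{periodic} orbit; a heteroclinic orbit transits only once, so the drift between neighbouring orbits is governed by the difference of transit times alone. The right analogue is: if $\tau-\tau'\leq c_1\eps$ then $G_m(S_{\tau,\tau'},\eps)\leq c_2(m+2\tau)$, and the quadratic bound comes from $O(N)$ slices of \emph{uniform} width $c_1\eps$ in transit time, each costing $O(N)$ balls; no convergent series appears. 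Slicing by $y$-thickness, as you propose, is not the right control anyway, since $\tau(y)\sim c\abs{\log y}$ makes the correspondence between $y$-thickness and transit-time difference degenerate near the boundary.

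Two smaller points on the lower bound. The statement ``any two points $a\in\Ga_{y_i}$, $a'\in\Ga_{y_j}$ are $(N,\eps)$--separated'' is false as stated (take both points in a small neighborhood of the sink $O_\om$: all their forward iterates remain $\eps$--close); it must be restricted, as in the paper, to the points actually used, namely backward iterates $\ph^{-\ell}$ of the section points with $\ell\leq N/2$, so that the two orbits reach a fixed fundamental domain near the exit at times differing by at least $2$ within the window $[0,N]$. Finally, the step you single out as hard -- separation along a single orbit -- is in fact the easy part here: since $x$ increases monotonically along every orbit with $y>0$, backward iterates of a section point never return $\eps$--close to it under forward iteration; the tameness condition is needed in the covering (upper-bound) estimates, not in the lower bound.
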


\begin{proof} We will first prove that $\L(\ph)\leq 2$ by exhibiting suitable coverings
of the strip, and then that $\L(\ph)\geq 2$ by finding separated sets. Let us introduce some notation.

\vskip1mm\noindent
-- Given $\eps>0$, we denote by $\Sig_\al(\eps), \Sig_\om(\eps)$ the vertical segments contained in the strip $\bS$ with equations
$x=\eps$ and $x=1-\eps$ respectively.
\vskip1mm\noindent
-- Given $y\in\,]0,\de]$, we denote by $\Ga_y$ the orbit whose ordinate in the flat zone of $\bS$ is $y$.
\vskip1mm\noindent
-- Given $0<eps<1/(8p)$, we denote by $\tau^{(\eps)}(y)$, or $\tau(y)$ for short, the time needed to go from the segment $\Sig_\al(\eps)$
to the segment $\Sig_\om(\eps)$ on the orbit $\Ga_y$, we say that $\tau(y)$ is the transition time on $\Ga_y$.
\vskip1mm\noindent
-- Due to the torsion condition, $\tau$ is a decreasing function
of $y$. So one can also label the orbits by their transition times: we write $C_\tau$ the orbit with transition
time $\tau$, so $C_{\tau(y)}:=\Ga_y$. Of course this labelling depends on the choice of $\eps$, the context will be clear enough
in the following.
\vskip1mm\noindent
-- We write $\Ga_0$ or $C_{\infty}$ for the boundary $y=0$.
\vskip1mm\noindent
-- Given two transition times $\tau$ and $\tau'$ with $\tau<\tau'\leq+\infty$, we denote by $S_{\tau',\tau}$ the strip
bounded  by the curves $C_{\tau'}$ and $C_{\tau}$.

\vskip2mm

{\bf 1. Proof of $\L(\ph)\leq 2$.}
Given $\eps>0$ and an integer $N$, we want to construct a covering of $\bS$ by subset of $d_N$--dimater less than $\eps$.
We will have to discriminate between two different regimes for the system: close enough to the boundary $\Ga_0$ the $N$ first iterates
roughly behave as those of the model on a segment (proposition \ref{prop:indseg}), while on the complement one has to take into account the drift between nearby orbits.
So we will split  $\bS$ into two $N$--depending strips and separately construct coverings for these two domains.

\vskip2mm
\noindent
{\bf 1.a.  Choice of suitable domains.} The following lemma in analogous to lemma \ref{lem:choice} and
will enable us to construct these strips.

\vskip1mm

\begin{lemma}   Fix $\eps>0$ small enough. For $k\in\{0,\ldots,p\}$
we denote by $\De_k^\mp$ the vertical segments contained in the strip $\bS$ of equations
$x=k/p\pm\eps/2$.
Let  $\B_k$ be the ``block'' of $\bS$ limited by the
segments $\De^+_k$ and $\De^-_k$.
Then,  there exists two integers $\ka$ and $N_0$  (both depending on $\eps$) such that if $N\geq N_0$,  for each index $k\in\{0,\ldots,p\}$:
$$
\ph^n(\De^+_k(\ka N))\subset \B_k,\qquad \forall n\in\{1,\ldots,N\},
$$
where we write $\De^+_k(q)$ for the intersection of the left vertical $\De^-_k$ of $\B_k$ with the strip $S_{\infty,q}$.
\end{lemma}

The proof is  essentially the same as that of lemma \ref{lem:choice}.
As in the previous section, the transition time $\ka N$ is the natural cutoff for the system on the timescale $N$. So we introduce the two domains
$$
\bS_{N}=S_{\infty,\ka N},\quad \bS^*_{N}=\bS\setm \bS_{N},
$$
for which we will separately construct adapted coverings.

\begin{figure}[h]
\begin{center}
\begin{pspicture}(4cm,1.5cm)
\psset{xunit=1cm,yunit=1cm}
\rput(2,0){
\psline[linewidth=.4mm](-6,0)(6,0)
\psline[linewidth=.4mm](-3.5,1)(3.5,1)
\psline[linewidth=.4mm](-6,0)(-5,.6)
\pscurve[linewidth=.4mm](-5,.6)(-4.25,.9)(-3.5,1)
\psline[linewidth=.4mm](6,0)(5,.6)
\pscurve[linewidth=.4mm](5,.6)(4.25,.9)(3.5,1)
\psline[linewidth=.4mm](-3.5,.3)(3.5,.3)
\pscurve[linewidth=.4mm](5,.17)(4.25,.27)(3.5,.3)
\psline[linewidth=.4mm](5,.17)(6,0)
\pscurve[linewidth=.4mm](-5,.17)(-4.25,.27)(-3.5,.3)
\psline[linewidth=.4mm](-5,.17)(-6,0)
\pscircle[fillstyle=solid,fillcolor=black](-6,0){.07}
\pscircle[fillstyle=solid,fillcolor=black](-3,0){.07}
\pscircle[fillstyle=solid,fillcolor=black](0,0){.07}
\pscircle[fillstyle=solid,fillcolor=black](3,0){.07}
\pscircle[fillstyle=solid,fillcolor=black](6,0){.07}
\rput(-6,-.3){$O_\al$}
\rput(-3,-.3){$O_1$}
\rput(0,-.3){$O_2$}
\rput(3,-.3){$O_3$}
\rput(6,-.3){$O_\om$}
\rput(0,.7){$\bS^*_N$}
\rput(-1.5,-.7){$\bS_N$}
\psline[linewidth=.2mm](-1.5,-.4)(-.5,.15)
\rput(1.5,-.7){$C_{\ka N}$}
\psline[linewidth=.2mm](1.3,-.4)(.5,.3)
}
\end{pspicture}
\end{center}
\vskip1cm
\end{figure}

\noindent
{\bf 1.b. Covering of the strip $\bS_N$.} In this domain, we reproduce with slight modifications the arguments of proposition \ref{prop:indseg}. This easily yields the existence of $c_\eps>0$ such that
$$
G_N(\bS_{N},\eps)\leq c_\eps N.
$$

\vskip2mm
\noindent
{\bf 1.c. Covering of the domain $\bS^*_{N}$.}
The main step consists in estimating from above the numbers $G_N( S_{\tau,\tau'},\eps)$ for
thin enough strips $S_{\tau,\tau'}$.

\begin{lemma} Let $m$ be a fixed positive integer, and let $\eps>0$ be given. There exists positive constants $c_1$  and $c_2$, depending only
on $\eps$,
such that if  the pair $(\tau,\tau')\in [\tau^* ,\infty[^2$ satisfies
$$
0\leq \tau-\tau' \leq c_1\,\eps
$$
then the strip $S_{\tau,\tau'}$ satisfies
$$
G_m(S_{\tau,\tau'},\eps)\leq c_2\, (m+2\tau).
$$
\end{lemma}

\begin{proof}  We will first analyze the dynamics on a single curve $C_\tau$, and then deduce from this study an estimate of the covering
number for a thin enough strip $S_{\tau,\tau'}$. We denote by $\norm{\ }$ the Sup norm on $\R^2$.

\vskip2mm

1.  Fix $\eps>0$ small enough. We let $\K$ be  the fundamental domain of the tameness condition, contained in the flat zone. We also introduce the Lipschitz constant $\la$ of the flow $\Phi$ of $V$ on the set $[-1,1]\times \bS$.
Given a transition time $\tau\in[\tau^*,\infty[$, we denote by $\K_\tau$ the intersection of $\K$ with the orbit $C_\tau$. Then if $J=[a,a']$ is a subinterval of
$\K_\tau$:
$$
\Sup_{t\in[-1,1]}E_{a,a'}(t)\leq \la \norm{a-a'}.
$$
and therefore, by the tameness condition,
$$
\Sup_{t\in\R}E_{a,a'}(t)\leq \la \norm{a-a'}.
$$
Finally, due to the shape of the orbits,  there exists a constant $c>0$ such that for all $t\in\R$, $\diam\,\ph_t(J)\leq c E_{a,a'}(t)$, therefore
for all $n\in\N$, $\diam_n J\leq c\la \norm{a-a'}$. As a consequence
$$
\forall k\in \Z,\quad \diam_n(\ph^k(J))\leq c\la \norm{a-a'}.
$$

Now, thanks to the torsion condition,  there exists a fixed integer $j^*$, independent of $\tau$ such that for all $\tau\in [\tau^*,+\infty[$ it is possible to find a covering of the
interval $\K_\tau$ by subintervals $J_1^{(\tau)},\ldots,J_{j^*}^{(\tau)}$ of diameter $\leq \eps/(2c\la)$. It is then easy to see that the iterates
$$
J_{j,n}^{(\tau)}=\ph^n(J_j^{(\tau)}),\quad n\in \{-m-[\tau]-1,\ldots,[\tau]+1\},\  j\in\{1,\ldots,j^*\},
$$
together with the interval $[O_\al,\ph^{-m-\tau-1}(a_\tau)]$ form a covering of the orbit $C_\tau$, and that all these sets have $m$--diameter
$\leq\eps/2$. So
$$
G_m(C_\tau,\eps/2)\leq j^*(m+2\tau+2).
$$

\vskip2mm

2. We now use the previous covering of a curve and fatten it a little to obtain a covering of a thin strip. Namely, given the initial
transition time $\tau$, we want to find a transition time $\tau'\leq \tau$ such that for any pair of points $a\in C_\tau$ and $a'\in C_{\tau'}$ with the same abscissa, the horizontal separation between any pair of iterates $\ph^n(a)$ and $\ph^n(a')$, $n\in\{0,\ldots,m\}$, is at most $\eps/2c$.
Assume that it is the case.  One then constructs a covering of the strip $S_{\tau,\tau'}$ by considering the previous covering
$$
J_{j,n}^{(\tau)}=[a_{jn},b_{jn}]
$$
of the curve $C_\tau$ and introducing the ``rectangles''  $R_{kl}$ limited by the curves $C_\tau$ and $C_{\tau'}$ and the vertical lines passing through
the points $a_{jn}$ and $b_{jn}$. One easily checks that the family $(R_{jn})$ is a covering of $S_{\tau,\tau'}$ by $j^*(m+2[\tau]+2)$
subsets with $m$--diameter less than $\eps$.

Son one only has to find a suitable time $\tau'<\tau$.
To estimate the mutual drift of the points on $C_\tau$ and $C_{\tau'}$, we will use the torsion condition.
We fix a point $a\in C_\tau$ and a time $t\geq 0$, and we write $a'$ for the point on $C_{\tau'}$ with the same
abscissa as $a$. We define the {\em time separation} as the difference $T(a,t)=t-t'$, where $t'$ is the time needed for $a'$ to pass through
the vertical over $\ph_t(a)$, more precisely, setting $\ph_t(a)=(x(t),y(t))$ and $\ph_t(a')=(x'(t),y'(t))$,  the time $t'$ is defined by the equality
$$
x'(t')=x(t).
$$
By the torsion condition, $T(a,t)\geq0$ for all $t\geq 0$. Moreover, one easily checks that
$$
T(a,t_1+t_2)=T(a,t_1)+T(\ph_{t_1}(a),t_2).
$$
We will have to use an upper bound of $T(a,t)$ for $0\leq t\leq t^*$, where $t^*(a)$ is defined by
$\ph_{t^*}(a)\in \Sig_\om(\eps)$.
Remark first that if $a\in\Sig_\al(\eps)\cap C_\tau$ then, by definition of the transition time, $t^*(a)=\tau$ and
$$
T(a,t)\leq T(a,t^*(a))= \tau-\tau',\qquad  0\leq t\leq t^*(a).
$$
If $b=\ph_s(a)$ with $a$ as above and $\tau\geq s\geq 0$, then $t^*(b)=\tau-s$ and
$$
T(b,t)\leq T(b,t^*(b))=T(a,\tau)-T(a,t)\leq T(a,\tau)=\tau-\tau',\qquad 0\leq t\leq t^*(b).
$$
Finally, note that due to the normal form in $O_\al$, the image by the flow of a vertical segment in this domain is still vertical.
Therefore if $c\in O_\al\cap C_\tau$ and if $t\geq 0$ is such that $\ph_t(c)\in O_\al$
$
T(c,t)=0.
$
As a consequence, if $c=\ph_{-s}(a)$ with $s\geq 0$, then $t^*(c)=\tau+s$ and
$$
T(c,t)\leq T(c,t^*(c))=T(c,s)+T(a,\tau)=T(a,\tau)=\tau-\tau',\qquad 0\leq t\leq t^*(c).
$$

Now, if $\ell$ is the maximal length of the vector field $V$,  the distance between $\ph_t(a)$ and $\ph_{t}(a')$ clearly satisfies
$$
\norm{\ph_t(a)-\ph_{t}(a')}\leq c\, \ell\,T(a,t), \quad \forall t\geq 0.
$$
Therefore, for all $a\in C_\tau$ and for $0\leq t\leq t^*(a)$,
$$
\norm{\ph_t(a)-\ph_{t}(a')}\leq c\, \ell\,(\tau-\tau')\leq \eps/2
$$
as soon as
$$
\tau-\tau'\leq c_1\,\eps,\qquad  c_1=\Frac{1}{2 c\,\ell}.
$$
Finally, when $t\geq t^*(a)$, both points $a$ and $a'$ are on the right of $\Sig_\om(\eps)$, so their distance is less than $\eps/2$.
Gathering these estimates with the construction of the rectangles $R_{jn}$, one sees that our statement is proved for $c_2=2j^*$
(for $m$ large enough). \end{proof}

To conclude the proof of $\L(\ph)\leq2$, we now pick out a family of transition times $(\tau_\ell)_{1\leq \ell\leq\ell^*}$ such that
$0\leq\tau_{\ell+1}- \tau_\ell\leq c_1\eps$ and $\tau_1=\tau^*$, $\tau_{\ell^*}=\ka\,N$. Therefore one can choose:
$$
\ell^* \leq \Frac{\ka\,N-\tau^*}{c_1\,\eps}+1.
$$
We can apply the previous lemma to each strip $S_{\tau{\ell+1},\tau_\ell}$ and get a covering of the strip $\bS^*_N$ by subsets
of $d_N$--diameter less than $\eps$. One sees that this covering has
less than
$$
c_2(N+ 2\tau_{k+1})\leq c_2(1+2\ka)N
$$
 elements.
 The union of these coverings for $1\leq \ell\leq \ell^*$ form a covering
of the strip $\bS^*$ by subsets
of $d_N$--diameter less than $\eps$. This proves the existence of a constant $C_\eps$, depending only on $\eps$, such that
$$
G_N(\bS^*_N,\eps)\leq C_\eps N^2.
$$
Finally
$$
G_N(\bS,\eps)\leq G_N(\bS_N,\eps)+G_N(\bS^*_N,\eps)\leq c_\eps N+C_\eps N^2
$$
which concludes the proof.

\vskip3mm

{\bf 2. Proof of $\L(\ph)\geq 2$.} Given $\eps$ small enough and $N$ large enough, we will now construct an explicit $(N,\eps)$--separated subset of $\bS$. We fix a vertical segment $\Sig^-$ in the flat zone, with abscissa $\leq 1/p$, and a vertical segment $\Sig^+$ in the flat
zone, with abscissa $\geq 1-1/p$. We denote by $\ha \tau(y)$ the transition time between these two sections on the orbit $\Ga_y$, so
$\ha\tau$ is a decreasing function of $y$ and $\ha\tau(y)\to+\infty$ when $y\to 0$. We denote by $a_\tau$ the intersection point
of $\Sig^-$ with the orbit $C_\tau$. We denote by $a^\pm$ the intersection point of $\Sig^\pm$ with the boundary $C_\infty$.

\begin{lemma} Assume that $\eps<\Min(\norm{a-\ph(a^+)},\norm{\ph(a^-)-\ph^2(a^-)})$. Then the subset
$$
A=\{\ph^{-\ell}(a_{2n})\mid (\ell,n)\in\N^2,\ 0\leq \ell\leq N/2,\ N/3\leq n\leq N/2\}
$$
is $(N,\eps)$ separated.
\end{lemma}

\proof We begin by proving that the subset of all points of $A$ located on the same curve
$C_{2n}$ is $(N,\eps)$--separated. Indeed, this is true for each subset of the form
$$
D_\tau=\{\ph^{-\ell}(a_\tau)\mid 0\leq \ell\leq N-1\}.
$$
To see this, remark first that due to the torsion condition:
$$
\norm{a_\tau-\ph(a_\tau)} >\eps.
$$
Let $0\leq\ell<\ell'\leq N-1$. Then
$$
\ph^{\ell'}(\ph^{-\ell'}(a_\tau))=a_\tau,\quad \ph^{\ell'}(\ph^{-\ell}(a_\tau))=\ph^{\ell'-\ell}(a_\tau)
$$
and since $\ell'-\ell\geq1$, the point $\ph^{\ell'-\ell}(a_\tau)$ is on the right of $\ph(a_\tau)$ on the orbit $C_\tau$, so
$\norm{\ph^{-\ell'}(a_\tau)-\ph^{-\ell}(a_\tau)}>\eps$. This
proves that
$$
d_N^\ph(\ph^{-\ell'}(a_\tau),\ph^{-\ell'}(a_\tau))>\eps
$$
and therefore $D_\tau$ is $(N,\eps)$ separated. Now $A_{2n}:=C_{2n}\cap A\subset D_{2n}$, so $A_{2n}$ is $(N,\eps)$
separated.

Now we prove that if $x\in A_{2n}$ and $x'\in A_{2n'}$ with $n\neq n'$, then $d_N^\ph(x,x')>\eps$. So let
$x=\ph^{-\ell}(a_{2n})$ and $x'=\ph^{-\ell'}(a_{2n'})$, with $0\leq\ell\leq\ell'\leq N/2$.
\vskip1mm\noindent
-- Assume first that $\ell\neq\ell'$. Then by the same argument as above one easily sees that
$
d_N^\ph(\ph^{-\ell'}(a_\tau),\ph^{-\ell'}(a_\tau))>\eps.
$
\vskip1mm\noindent
-- Assume that $\ell\neq\ell'$. Then $\ph^\ell(x)=a_{2n}$ and $\ph^\ell(x')=a_{2n'}$. Let $\nu,\nu'$ be the (only) integers such that
$\ph^\nu(x)$ and $\ph^{\nu'}(x')$ are in the fundamental domain $\K^-$. Obviously $\nu\geq 2n$, and since $2n-2n'\geq 2$,
one sees that $\nu'\leq \nu-2$. So
$$
\ph^\nu(a_{2n'})\in\ph^2(\K^-)
$$
which due to the assumption on $\eps$ yields $\norm{\ph^\nu(a_{2n'})-\ph^\nu(a_{2n})}>\eps$.
Notice finally that $\ell+\nu\leq N$, therefore $d_N^\ph(x,x')>\eps$.

\vskip2mm

So any pair of points in $A$ is $(N,\eps)$ separated, wich proves our statement.
\end{proof}

Now $\# A\geq c N^2$, so $\L(\ph)\geq 2$. Therefore $\L(\ph)=2$.


\newpage


\end{document}